 \theoremstyle{plain}
 \newtheorem{theorem}{Theorem}[section]
 \newtheorem{Lemma}[theorem]{Lemma}
 \newtheorem{Cor}[theorem]{Corollary}
 \theoremstyle{definition}
 \newtheorem{Rem}[theorem]{Remark}
 \newtheorem{?}[theorem]{Problem}
 \newtheorem{Ex}[theorem]{Example}
\providecommand{\keywords}[1]
{
  \small	
  \textbf{\textit{Keywords---}} #1
}
\renewcommand{\maketitle}{\bgroup\setlength{\parindent}{0pt}
\begin{center}
  \textbf{\@title}
\end{center}

\begin{flushleft}
	\@author
\end{flushleft}\egroup
}
\title{\begin{LARGE}On differentiability and mass distributions of topologically typical multivariate Archimedean copulas\end{LARGE}}
\date{}
\author[1,a]{Nicolas Dietrich}
\author[1,b]{Wolfgang Trutschnig}
\affil[1]{\begin{small} Department of Artificial Intelligence and Human Interfaces, University of Salzburg, Austria \end{small}}
\affil[a]{\begin{small}\url{nicolaspascal.dietrich@plus.ac.at}\end{small}}
\affil[b]{\begin{small}\url{wolfgang.trutschnig@plus.ac.at}\end{small}}
\begin{document}
\maketitle

\begin{abstract}
Copulas, in particular Archimedean copulas are commonly viewed as analytically nice and 
regular objects. Motivated by a recently established result sta\-ting that the first partial derivatives of 
bivariate copulas can exhibit surprisingly pathological behavior, we focus on the class of $d$-dimensional
Archimedean copulas denoted by $\mathcal{C}_{ar}^d$ and show that partial derivatives of order $(d-1)$ 
can be sur\-pri\-singly irregular as well. In fact, we prove the existence of 
Archimedean copulas $C \in \mathcal{C}_{ar}^d$ whose $(d-1)$-st order partial 
derivatives are pathological in the sense that for almost every $\mathbf{x} \in [0,1]^{d-1}$ the derivative 
$\partial_1...\partial_{d-1}C(\mathbf{x},y)$ does not exist on a dense set of $y \in (0,1)$. \\
Since the existence of mixed partial derivatives of order $(d-1)$ of a copula $C$ 
is closely related to the existence of a discrete component,  
we also study mass distributions of Archimedean copulas. 
Building upon the interplay between Archimedean copulas and so-called Williamson measures
we show that absolute continuity, discreteness and singularity 
of the Williamson measure propagates to the associated Archimedean copula and vice versa. 
Moreover, we prove the fact that the sub-family of $\mathcal{C}_{ar}^d$ consisting of copulas 
whose absolutely continuous, discrete and singular component have full support is dense in 
$\mathcal{C}_{ar}^d$.  \\
Finally, viewing $\mathcal{C}_{ar}^d$ in the light of Baire categories, we show that, in contrast to the space of bivariate copulas, 
a topologically typical $d$-dimensional Archimedean copula $C$ is not absolutely continuous but 
has degenerated discrete component, implying that pathological elements are rare in $\mathcal{C}_{ar}^d$.  
\end{abstract}

\keywords{Derivative, Archimedean copula, Markov kernel, Category theory}

\section{Introduction}
Due to their simple algebraic structure and their relevance in various applications, e.g., in finance and hydrology 
\cite{durante2007, mult_arch, nelsen2006}, Archimedean copulas gained increasing popularity over the last decades. Considering a so-called Archimedean generator $\psi \colon [0,\infty) \rightarrow [0,1] =: \mathbb{I}$ and its 
quasi-inverse function $\varphi$ (and assuming that the generator $\psi$ is sufficiently monotone/regular), setting 
$$
C_\psi(x_1,...,x_d) := \psi(\varphi(x_1) + ... + \varphi(x_d)).
$$
defines a $d$-dimensional Archimedean copula $C_\psi$. 
Building upon the afore-mentioned regularity of the generator $\psi$ it is known 
(see, e.g., \cite[Theorem A1]{kasper2024}) that for an arbitrary $d$-dimensional Archimedean copula $C$ 
and every $y \in (0,1)$ the partial derivative $\partial_1...\partial_{d-1}C(\mathbf{x},y)$ exists for 
$\lambda_{d-1}$-almost every $\mathbf{x} := (x_1,...,x_{d-1}) \in \mathbb{I}^{d-1}$ (whereby $\lambda_{d-1}$ denotes the $(d-1)$-dimensional Lebesgue measure and $\mathbb{I}$ the unit interval). 
However, as pointed out in the bivariate setting in \cite{dietrich2024}, derivatives of copulas have to 
be handled with care - the main objective of this paper is to show that this statement remains true for the  
class $\mathcal{C}_{ar}^d$ of all $d$-dimensional Archimedean copulas. \\
Motivated by the results in \cite{dietrich2024}, where it was shown that in the whole family 
$\mathcal{C}^2$ of bivariate copulas as well as in the family of bivariate Extreme Value copulas 
there exist elements $C$ with the property that the first partial derivative $\partial_1C(\cdot,\cdot)$ 
does not exist on a set of full support, we here establish a similar result for the family of $d$-dimensional Archimedean copulas and show the existence of an Archimedean copula $C$ with the following property: 
there exists some Borel set $\Lambda \subseteq \mathbb{I}^{d-1}$ with $\mu_{C^{1:d-1}}(\Lambda) > 0$ 
(whereby $\mu_{C^{1:d-1}}$ is the $(d-1)$-stochastic measure corresponding to the marginal copula $C^{1:d-1}$ of the first $d-1$ coordinates) such that for $\lambda_{d-1}$-almost every $\mathbf{x} \in \mathbb{I}^{d-1}$ 
the mixed partial derivative $\partial_1...\partial_{d-1}C(\mathbf{x},y)$ does not exist for 
a dense set of $y \in (0,1)$. More surprisingly, the family of all copulas exhibiting the afore-mentioned pathological behavior is still comparably large in the sense that it is dense in $\mathcal{C}_{ar}^d$ with respect to 
the standard uniform metric $d_\infty$.\\
Building upon the fact that (for arbitrary dimension $d \geq 2$) multivariate Archimedean copulas $C$ 
can be characterized in terms of so-called Williamson measures $\gamma$ (probability measures $\gamma$ on $(0,\infty)$, 
see \cite{neslehova}), we first derive various results on the interplay 
between the copula and its Williamson measure, which provide the basis for constructing co\-pulas with
afore-mentioned pathological differentiability properties. 
In particular, we show that the Williamson measure $\gamma$ has a
non-degenerated absolutely continuous/discrete/singular (singular in the sense that it has no point masses and its corresponding distribution function $F_\gamma$ has derivative 0 $\lambda$-almost everywhere) component if, and only if 
the corresponding Archimedean copula $C$ has non-degenerated absolutely continuous/discrete/singular component (in 
a sense specified in Section \ref{section:arch_cop}). 
Moreover we prove that $C \in \mathcal{C}_{ar}^d$ has full support if, and only if the correspon\-ding Williamson 
measure $\gamma$ has full support and then show that the same holds for the absolutely continuous, the discrete 
and the singular component. Building upon these results we are able to prove the surprising fact 
that the sub-family of $\mathcal{C}_{ar}^d$ consisting of copulas 
whose absolutely continuous, discrete and singular component have full support is dense in 
$\mathcal{C}_{ar}^d$.  \\
One natural question that naturally arises is, 
whether the afore-mentioned subfamilies are considered topologically `small' or `large', indicating 
whether their elements represent atypical or typical elements of $\mathcal{C}_{ar}^d$.
Utilizing Baire categories (see, e.g., \cite{oxtoby1980}), topology provides a natural framework for distinguishing between `small' and `large' sets. A subset of a topological space $(\mathcal{T},\tau)$ is 
called \textit{nowhere dense} if, and only if the interior of the closure of that set is empty in $(\mathcal{T},\tau)$. Furthermore, a set in $(\mathcal{T},\tau)$ is called \textit{meager}/of \textit{first Baire category}, if it can be covered by a countable union of nowhere dense sets. A set is of \textit{second Baire category}, if it is not of first Baire category and, finally, a set is called \textit{co-meager}, if it is the complement of a set of first Baire category. Following \cite{bruck1997} and sticking to the concept of `small' and `large' sets, in a complete metric 
space, sets of first Baire category are the `small' sets, sets of second Baire category are referred to as 
`not small' and co-meager sets are the `large' sets. 
Moreover, given a topological space $(\mathcal{T},\tau)$, we call an element of a co-meager set 
a \textit{typical} element and an element of a meager set an \textit{atypical} element of that space.\\
In the family $\mathcal{C}^2$ of all bivariate copulas a typical copula is discrete 
(even mutually completely dependent, see \cite{dietrich2024,kim}), whereas a typical Extreme Value copula 
has degenerated discrete component (again see \cite{dietrich2024}). 
Following along these lines, using the afore-mentioned results on differentiability and mass distributions, 
as main result on sizes of subclasses of $\mathcal{C}_{ar}^d$ we finally show that 
a typical $d$-dimensional Archimedean copula has non-degenerated discrete component, is not absolutely 
continuous and has full support. As a direct consequence of this result we can provide a new, significantly 
simplified proof of \cite[Theorem 2.5]{cat_exchange}, stating that for dimension $d = 2$ a typical Archimedean copula is strict. For a selection of further contributions viewing copulas from the Baire category perspective
we refer to \cite{durante2022,cat_exchange,cat_quas_cop,typ_cop_sing} and the references therein.\\

The rest of this paper is organized as follows: Section \ref{section:notation} containing 
notations and definitions used throughout this paper is divided into two parts: the first subsection 
focuses on general notation used throughout this paper while the second recalls 
relevant properties of Archimedean copulas and their interplay with Williamson measures. 
Section \ref{section:regularity} studies regularity and mass distributions of Archimedean copulas. 
After showing how regularity and measure-theoretic properties of the Williamson measure propagate 
to the corresponding Archimedean copula (and vice versa) we prove that the family of Archimedean copulas 
whose discrete, absolutely continuous and singular component have full support $\mathbb{I}^d$ 
is dense in $(\mathcal{C}_{ar}^d,d_\infty)$. After deriving analogous results 
for the Kendall distribution function in Section \ref{sec:kendall}, in Section \ref{section:derivatives} 
we focus on $(d-1)$-st order partial derivatives $\partial_1...\partial_{d-1}C$ of Archimedean copulas $C$ 
and establish denseness of the subclass of all Archimedean copulas exibiting the pathological 
differentiability behavior sketched at the beginning of the introduction.  
Finally, Section \ref{section:category} provides the afore-mentioned category results for subfamilies 
of $\mathcal{C}_{ar}^d$. 

\section{Notation and Preliminaries}\label{section:notation}
\subsection{General notation and definitions}
Throughout this contribution we will write $\mathbb{I}=[0,1]$ and let bold symbols denote vectors.  
In what follows $\mathcal{C}^d$ with $d \geq 2$ denotes the family of all $d$-dimensional copulas, i.e., 
the family of distribution functions (restricted to $\mathbb{I}^d$) of random vectors 
$\mathbf{X}=(X_1,\ldots,X_d)$ fulfilling that each $X_i$ is uniformly distributed on $\mathbb{I}$.  
For a $d$-dimensional random vector $\mathbf{X}$ we will write $\mathbf{X}\sim C$ 
for some $C \in \mathcal{C}^d$ if $C$ is the joint distribution function of 
$\mathbf{X}$ restricted to $\mathbb{I}^{d}$. For every dimension $d \geq 2$, $M $ will denote the minimum copula
and $\Pi$ the product (independence) copula.  
Given $C \in \mathcal{C}^d$ we will let $\mu_C$ denote the corresponding $d$-stochastic measure, i.e., 
the probability measure defined by $\mu_C([\mathbf{0},\mathbf{x}]) := C(\mathbf{x})$ 
for all $\mathbf{x}=(x_1,\ldots,x_d) \in \mathbb{I}^d$ with $[\mathbf{0},\mathbf{x}] := [0,x_1] \times [0,x_2] \times\ldots\times [0,x_d]$ and extended to the Borel $\sigma$-field $\mathcal{B}(\mathbb{I}^d)$
on $\mathbb{I}^d$ in the standard measure-theoretic way. \\
For $\mathbf{X} \sim C$ the Kendall distribution function $F_K^d$ of $C$ is the distribution function 
of the (univariate) random variable $C(\mathbf{X})$, i.e., we have
$$F_K^d(t) := \mathbb{P}(C(\mathbf{X}) \leq t)$$
for every $t \in \mathbb{I}$.

Simplifying notation, for every pair $(i,j) \in \{1,...,d\}^2$ with $i<j$ and 
$\mathbf{x} \in \mathbb{I}^d$ we will write $\mathbf{x}_{i:j}:=(x_i,\ldots,x_j)$.  
Moreover, for $C \in \mathcal{C}^d$ and $1 \leq m \leq d$ the marginal copula $C^{1:m}$ of 
the first $m$ coordinates of $C$ is defined by
$C^{1:m}(x_1,x_2,\ldots, x_m) := C(x_1,x_2,\ldots, x_m, 1,\ldots, 1)$ for all $\mathbf{x} = (x_1,x_2,...,x_m) \in \mathbb{I}^m$.
The uniform metric $d_\infty$ on $\mathcal{C}^d$ is given by 
$$
d_\infty(A,B) := \sup_{\mathbf{x},\mathbf{y} \in \mathbb{I}^d}|A(\mathbf{x}) - B(\mathbf{y})|
$$
for all $A,B \in \mathcal{C}^d$. It is well known (see, e.g., \cite{dur_princ,nelsen2006}) 
that $(\mathcal{C}^d, d_\infty)$ is a compact metric space. 
For more background on copulas and $d$-stochastic measures we refer to \cite{dur_princ,nelsen2006}.

Given an arbitrary topological space $(S,\tau)$ the Borel $\sigma$-field on $S$ will be denoted 
by $\mathcal{B}(S)$, the family of all probability measures and of all (positive) finite measures on 
$\mathcal{B}(S)$ by $\mathcal{P}(S)$ and $\mathcal{M}(S)$, respectively.  
Focusing on Polish spaces $S$ the topology induced by weak convergence of elements in 
$\mathcal{P}(S)$ will be denoted by $\tau_w$ (see, e.g., \cite{Bill}).
For every $\nu \in \mathcal{M}(S)$ the support $\mathrm{supp}(\nu)$ of $\nu$ is 
the complement of the union of all open sets $U$ with the property that $\nu(U) = 0$. 
In the sequel the support of a copula 
$C \in \mathcal{C}^d$ is (by definition) the support of its corresponding $d$-stochastic measure $\mu_C$.

The Lebesgue measure on $\mathcal{B}(\mathbb{I}^d)$ or $\mathcal{B}(\mathbb{R}^d)$ 
will be denoted by $\lambda_d$, if the dimension is 
equal to $1$ we will drop the index and simply write $\lambda$ instead of $\lambda_1$. 
The Dirac measure at some point $x \in S$ is denoted by $\delta_x$.
Letting $(S,d)$ and $(S',d')$ denote two metric (or, more general, two topological) spaces, 
$T:S \rightarrow S'$ be a Borel-measurable transformation and $\nu \in \mathcal{P}(S)$, then the 
push-forward (measure) $\nu^T \in \mathcal{P}(S')$ of $\nu$ via $T$ is defined by
$\nu^T(F):=\nu(T^{-1}(F))$ for all $F \in \mathcal{B}(S')$.

In what follows conditional distributions and Markov kernels (regular conditional distributions) 
will play an important role. 
Given some $m \in \{1,\ldots,d-1\}$ we call a map 
$K: \mathbb{R}^m\times\mathcal{B}(\mathbb{R}^{d-m}) \rightarrow \mathbb{I}$ an $m$-\emph{Markov kernel} from 
$\mathbb{R}^m$ to $\mathbb{R}^{d-m}$, if the function $\mathbf{x}\mapsto K(\mathbf{x},E)$ is $\mathcal{B}(\mathbb{R}^{m})$-$\mathcal{B}(\mathbb{R}^{d-m})$-measurable for every fixed $E\in\mathcal{B}(\mathbb{R}^{d-m})$ and the map $E\mapsto K(\mathbf{x},E)$ is a probability measure on $\mathcal{B}(\mathbb{R}^{d-m})$ for every 
$\mathbf{x}\in\mathbb{R}^m$. 
If for every $\mathbf{x} \in \mathbb{I}^m$ the measure $K(\mathbf{x},\cdot)$ only fulfills 
$K(\mathbf{x},\mathbb{I}^{d-m}) \leq 1$ instead of $K(\mathbf{x},\mathbb{I}^{d-m}) = 1$ 
then we call $K$ an $m$-sub-Markov kernel.\\
Given a $(d-m)$-dimensional random vector $\mathbf{Y}$ and an $m$-dimensional random vector $\mathbf{X}$ 
on a joint probability space $(\Omega,\mathcal{A},\mathbb{P})$, a Markov kernel $K$ is called a 
regular conditional distribution of
$\mathbf{Y}$ given $\mathbf{X}$, if (and only if) for every set $E \in \mathcal{B}(\mathbb{R}^{d-m})$ the 
identity 
$$
K(\mathbf{X}(\omega), E) = \mathbb{E}(\mathbf{1}_E \circ \mathbf{Y} | \mathbf{X})(\omega)
$$
holds for $\mathbb{P}$-almost every $\omega \in \Omega$.
It is a well-known fact (see \cite{Kallenberg, Klenke}) that for each pair $(\mathbf{X}, \mathbf{Y})$ of random 
vectors such a regular conditional 
distribution $K$ of $\mathbf{Y}$ given $\mathbf{X}$ exists and that it is 
unique for $\mathbb{P}^{\mathbf{X}}$-almost every $\mathbf{x} \in \mathbb{R}^m$.
For $(\mathbf{X}, \mathbf{Y}) \sim C$ we will let 
$K_{C}:\mathbb{I}^m \times \mathcal{B}(\mathbb{I}^{d-m}) \to \mathbb{I}$ denote (a version of) the corresponding 
conditional distribution of $\mathbf{Y}$ given $\mathbf{X}$; $K_C$ will simply be referred to as 
(a version of) the $m$-\emph{Markov kernel} of the copula $C$. \\
For every $G \subseteq \mathbb{I}^d$ and $\mathbf{x} \in \mathbb{I}^m$ 
define the $\mathbf{x}$-section $G_\mathbf{x}$ of $G$ by $G_{\mathbf{x}}:=\{\mathbf{y}
\in \mathbb{I}^{d-m}: (\mathbf{x},\mathbf{y}) \in G\}\in\mathcal{B}(\mathbb{I}^{d-m})$. 
Applying \emph{disintegration} of $\mu_C$ into the marginal $\mu_{C^{1:m}}$ and the $m$-Markov kernel $K_C$ of $C$
(see \cite[Section 5]{Kallenberg} and \cite[Section 8]{Klenke}) the following identity 
holds for all $G \in \mathcal{B}(\mathbb{I}^d)$:
\begin{align}\label{eq:DI}
	\mu_C(G) = \int_{\mathbb{I}^m} K_{C}(\mathbf{x},G_{\mathbf{x}})
	\, \mathrm{d}\mu_{C^{1:m}}(\mathbf{x}).
\end{align}
For more background on conditional expectation, conditional 
distributions and Markov kernels we refer to \cite{Kallenberg, Klenke}. 

Throughout this article a measure $\nu \in \mathcal{M}(S)$ with $S=\mathbb{I}^d$ or $S=[0,\infty)$ 
is called singular (w.r.t. $\lambda_d$ or w.r.t. to $\lambda$, respectively)
if $\nu$ does not have any point-masses and if there exists some $G \in \mathcal{B}(S)$ such that 
$\lambda_d(G) = 0$  and $\nu(G) = \nu(S)$ hold. We will refer to such measures simply as `singular' (instead
of the alternative `singular without point-masses')
for the sake of simplicity, for being in accordance with singular distribution functions, and because
of the subsequent simple observation: 
For arbitrary $C \in \mathcal{C}^d$ the corresponding $d$-stochastic measure $\mu_C$ obviously has no point-masses, i.e., 
it's discrete component in the sense of Lebesgue decomposition is degenerated. 
Procee\-ding analogously to \cite{mult_arch} and decomposing the $(d-1)$-kernels $K_C^{}$, however, allows
to decompose $C$ into three not necessarily degenerated components: 
in fact, denoting the absolutely continuous, the discrete and the singular $(d-1)$-sub-kernels 
of $K_C$ by $K_C^{abs}$, $K_C^{dis}$, $K_C^{sing} \colon \mathbb{I}^{d-1} \times \mathcal{B}(\mathbb{I}) 
\rightarrow \mathbb{I}$, respectively, according to \cite{Lange} we have that
\begin{equation}\label{eq:lebesgue_decomp_markov}
K_C(\mathbf{x},F) = K_C^{abs}(\mathbf{x},F) + K_C^{dis}(\mathbf{x},F) + K_C^{sing}(\mathbf{x},F)
\end{equation}
holds for every $\mathbf{x} \in \mathbb{I}^{d-1}$ and every $F \in \mathcal{B}(\mathbb{I})$. Using
this decomposition and applying disintegration allows to define (to what we refer to as) 
the absolutely continuous, the discrete and the singular component of $\mu_C$ by
\begin{align}\label{eq:def_abs_dis_sing_copula}
\nonumber \mu_C^{abs}(E\times F) &:= \int_E K_C^{abs}(\mathbf{x},F) \mathrm{d}\mu_{C^{1:d-1}}(\mathbf{x}),\\
\nonumber\mu_C^{dis}(E\times F) &:= \int_E K_C^{dis}(\mathbf{x},F) \mathrm{d}\mu_{C^{1:d-1}}(\mathbf{x}),\\
\mu_C^{sing}(E\times F) &:= \int_E K_C^{sing}(\mathbf{x},F) \mathrm{d}\mu_{C^{1:d-1}}(\mathbf{x}),
\end{align}
for all $E \in \mathcal{B}(\mathbb{I}^{d-1})$ and $F \in  \mathcal{B}(\mathbb{I})$ and extend them 
to full $\mathcal{B}(\mathbb{I}^d)$ in the standard way. 
Throughout this article a copula $C \in \mathcal{C}^d$ with absolutely continuous $(d-1)$-marginal $C^{1:d-1}$ 
is called absolutely continuous, discrete or singular if $\mu_C^{abs}(\mathbb{I}^d) = 1$, 
$\mu_C^{dis}(\mathbb{I}^d) = 1$ or $\mu_C^{sing}(\mathbb{I}^d) = 1$ holds. 

Considering a real function $u$, the left-hand and right-hand derivatives of $u$ (assuming their existence) 
will be denoted by $D^-u$ and $D^+u$, respectively; $u^{(m)}$ will denote the $m$-th derivative of $u$ (wherever it exists). 
A function $u\colon (a,b) \rightarrow \mathbb{R}$, whereby $(a,b) \subseteq \mathbb{R}$ denotes 
an open, not necessarily finite interval, is called $d$-monotone with $2 \leq d \in \mathbb{N}$, 
if (i) it is differentiable up to order $d-2$, if (ii) 
$$
(-1)^ku^{(k)}(x) \geq 0,
$$
holds for all $k \in \{0,...,d-2\}$ and $x \in (a,b)$, and if (iii) $(-1)^{d-2}u^{(d-2)}$ is non-increasing and convex on $(a,b)$. 
Finally, a real-valued function $u$ defined on $[0,\infty)$ is called $d$-monotone, if it is continuous on $[0,\infty)$ and $d$-monotone on $(0,\infty)$.
\subsection{Multivariate Archimedean copulas}\label{section:arch_cop}
In what follows we will consistently use the conventions 
$\inf \emptyset:=\infty, \frac{1}{\infty}:=0, \frac{1}{0}:=\infty$.  
We call a non-increasing and continuous function $\psi \colon [0,\infty) \rightarrow \mathbb{I}$ which fulfills $\psi(0) = 1$, $\lim_{z \rightarrow \infty}\psi(z) = 0 =: \psi(\infty)$ and which is 
strictly decreasing on $[0,\inf\{x\colon \psi(x) = 0\})$ an Archimedean generator; in the sequel 
we will simply refer to Archimedean generators as generators. 
The pseudo inverse $\varphi \colon \mathbb{I} \rightarrow [0,\infty]$ of a generator $\psi$ is defined by $\varphi(y) := \inf\{z \in [0,\infty] \colon \psi(z) = y\}$ for every $y \in \mathbb{I}$.
It is straightforward to see that $\varphi$ is convex and strictly decreasing on $(0,1]$, 
right-continuous at $0$ and fulfills $\varphi(1) = 0$ (see \cite{mult_arch}). 
A generator $\psi$ (or its pseudo-inverse $\varphi$) is called strict if $\varphi(0) = \infty$ (or equivalently if $\psi(z) > 0$ holds for all $z \in [0,\infty)$). 
A copula $C \in \mathcal{C}^d$ is called Archimedean if there exists an Archimedean generator $\psi$ such that
\begin{equation}\label{eq:def_arch_cop}
C(\mathbf{x}) := \psi\left(\sum_{i = 1}^d\varphi(x_i)\right)
\end{equation}
holds for all $\mathbf{x} \in \mathbb{I}^d$. In order to stress the correspondence between generator $\psi$ and copula 
$C$ in the sequel we sometimes write $C_\psi$ instead of $C$. 
According to \cite[Theorem 2.2]{neslehova}, $C(\mathbf{x}) := \psi\left(\sum_{i = 1}^d\varphi(x_i)\right)$
 is a $d$-dimensional Archimedean copula if, and only if $\psi$ is $d$-monotone on $[0,\infty)$, which, in turn is 
equivalent to the fact that $(-1)^{d-2}\psi^{(d-2)}$ exists on $(0,\infty)$, is non-negative, non-increasing 
and convex on $(0,\infty)$, see \cite[Proposition 2.3]{neslehova}. 
Convexity of $(-1)^{d-2}\psi^{(d-2)}$ implies that both $D^-\psi^{(d-2)}(z)$ and $D^+\psi^{(d-2)}(z)$ 
exist on $(0,\infty)$, that the one-sided derivatives coincide outside a countable set (see \cite[Theorem 3.7.4]{kannan1996} and \cite[Appendix C]{Pollard}) and that $D^-\psi^{(d-2)}(z) = D^+\psi^{(d-2)}(z)$ 
holds for every continuity point $z$ of $D^-\psi^{(d-2)}$. 
Furthermore (see \cite{mult_arch}), every $d$-monotone generator $\psi$ fulfills 
$\psi^{(m)}(\infty):=\lim_{z \rightarrow \infty}\psi^{(m)}(z) = 0$ for every $m \in \{0,...,d-2\}$ as well as 
$D^-\psi^{(d-2)}(\infty):=\lim_{z \rightarrow \infty}D^-\psi^{(d-2)}(z) = 0$.\\
In order to have a one-to-one correspondence between generators and Archimedean copulas in what follows 
we always assume the generator $\psi$ to be normalized in the sense that $\psi(1) = \frac{1}{2}$ (or equivalently $\varphi(\frac{1}{2}) = 1$) holds and denote the family of all normalized $d$-monotone generators by $\mathbf{\Psi}_d$. 
The class of all $d$-dimensional Archimedean copulas will be denoted by $\mathcal{C}_{ar}^d$.  
Moreover an Archimedean copula $C_\psi \in \mathcal{C}_{ar}^d$ will be called strict (non-strict) 
if its corresponding generator $\psi$ is strict (non-strict). Throughout this contribution $\mathcal{C}_{ar,s}^d$ and $\mathcal{C}_{ar,n}^d$ will denote
the subclasses of all strict and non-strict $d$-dimensional Archimedean copulas, respectively. 
According to \cite{neslehova} $C \in \mathcal{C}_{ar}^d$ is absolutely continuous if, and only if $\psi^{(d-1)}$ exists and is absolutely continuous on $(0,\infty)$. Building upon this fact, in the absolutely 
continuous case (a version of) the density $c$ of C is given by
\begin{equation}\label{eq:density}
c(\mathbf{x}) = \mathbf{1}_{(0,1)^d}(\mathbf{x})\prod_{i = 1}^d \varphi'(x_i) \cdot D^-\psi^{(d-1)}\left(\sum_{i = 1}^d \varphi(x_i)\right)
\end{equation}
for all $\mathbf{x} \in \mathbb{I}^d$. 
As established in \cite[Proposition 4.1]{neslehova}, every at most $(d-1)$-dimensional 
marginal of a $d$-dimensional Archimedean copula is absolutely continuous. 
This fact allows us to work with the definition of the absolutely continuous, the discrete and the 
singular component of an Archimedean copula $C \in \mathcal{C}_{ar}^d$ according to 
equation \eqref{eq:def_abs_dis_sing_copula}; the families of all absolutely continuous, of all discrete and 
of all singular Archimedean copulas will be denoted by $\mathcal{C}_{ar,abs}^d$, $\mathcal{C}_{ar,dis}^d$ and $\mathcal{C}_{ar,sing}^d$, respectively. \\
Defining the family $\mathcal{P}_{nor}^d$ of all $d$-normalized probability measures by
$$
\mathcal{P}_{nor}^d := \left\{ \gamma \in \mathcal{P}([0,\infty)):\int_{\mathbb{I}}(1-t)^{d-1} \mathrm{d}\gamma(t) = \frac{1}{2} \right\},
$$
it is straightforward to verify that $\mathcal{P}_{nor}^d$ is weakly closed in $\mathcal{P}([0,\infty))$.
Considering that $\mathcal{P}([0,\infty))$ is a complete metric space (see \cite{Parthasarathy}), 
$\mathcal{P}_{nor}^d$ is (as closed subset of a complete metric space) complete as well.
Motivated by the results in \cite{neslehova}, characterizing Archimedean copulas via so-called Williamson measures
we define the family of all normalized $d$-Williamson measures $\mathcal{P}_{\mathcal{W}_d}$ by
\begin{equation}
	\mathcal{P}_{\mathcal{W}_d}:=\left\{ \gamma \in \mathcal{P}([0,\infty)): \gamma(\{0\}) = 0
	\textrm{ and } \int_{\mathbb{I}}(1-t)^{d-1} \mathrm{d}\gamma(t) = \frac{1}{2} \right\}.
\end{equation}
If the dimension $d$ is clear from the context we simply speak of Williamson measures. 
Throughout this article we use the convention that $\gamma(\{\infty\}) := 0$.
Following \cite{mult_arch,neslehova} there is a one-to-one correspondence between the family of normalized $d$-Williamson-measures $\mathcal{P_{W_d}}$ and the family $\mathbf{\Psi}_d$ of all normalized $d$-monotone 
Archimedean generators. In fact, the mapping $\mathcal{W}_d$ (usually referred to as the Williamson $d$-transform),
defined by
\begin{equation}\label{eq:williamson_transform}
\psi(z):=(\mathcal{W}_d\gamma)(z) := \int_{[0,\infty)}(1-tz)_+^{d-1}\mathrm{d}\gamma(t), \quad z \in [0,\infty),
\end{equation}
maps $(\mathcal{P}_{\mathcal{W}_d},\tau_w)$ to $(\Psi_d,\Vert \cdot \Vert_\infty)$. 
Building upon the afore-mentioned one-to-one correspondence between normalized $d$-monotone Archimedean generators $\psi \in \mathbf{\Psi}_d$ 
and $d$-dimensional Archimedean copulas $C \in \mathcal{C}_{ar}^d$ the map $\xi_d$ implicitly 
defined by equation \eqref{eq:def_arch_cop} maps $(\mathcal{W}_d, \tau_w)$ to $(\Psi_d,\Vert \cdot \Vert_\infty)$. 
It is straightforward to show that both maps $\mathcal{W}_d$ and $\xi_d$ are homeomorphisms.
\begin{Lemma}\label{lem:hom_archimedean}
    The maps $\mathcal{W}_d$, $\xi_d$ and $\xi_d \circ \mathcal{W}_d$ are homeomorphisms.
\end{Lemma}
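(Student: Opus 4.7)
Bijectivity of $\mathcal{W}_d$ and $\xi_d$ is recalled from \cite{mult_arch, neslehova}, so the task reduces to verifying continuity in both directions for each of the two maps; the statement for $\xi_d \circ \mathcal{W}_d$ then follows immediately by composition of homeomorphisms. The plan is to treat $\mathcal{W}_d$ via weak-convergence techniques (Portmanteau, Pólya, tightness) and $\xi_d$ by combining compactness of $(\mathcal{C}^d, d_\infty)$ with a functional-equation argument for the inverse direction.

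For continuity of $\mathcal{W}_d$, suppose $\gamma_n \to \gamma$ weakly. Since $t \mapsto (1-tz)_+^{d-1}$ is bounded and continuous for every fixed $z \geq 0$, the Portmanteau theorem yields pointwise convergence $\psi_n(z) := (\mathcal{W}_d \gamma_n)(z) \to (\mathcal{W}_d\gamma)(z) =: \psi(z)$. As all $\psi_n$ are non-increasing and $\psi$ is continuous on the compactification $[0,\infty]$ with $\psi(\infty) = 0$, a Pólya-type argument upgrades this to uniform convergence: on any $[0,T]$ the monotone-to-continuous pointwise convergence is automatically uniform, while on $[T,\infty)$ we have $|\psi_n(z) - \psi(z)| \leq \psi_n(T) + \psi(T)$, which becomes arbitrarily small by choosing $T$ large. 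For the inverse direction, assume $\psi_n \to \psi$ uniformly. The estimate
\[
\gamma_n([1/z, \infty)) \;\leq\; \int_{[0,\infty)} \bigl(1 - (1-tz)_+^{d-1}\bigr)\, d\gamma_n(t) \;=\; 1 - \psi_n(z)
\]
together with continuity of $\psi$ at $0$ (where $\psi(0) = 1$) yields tightness of $\{\gamma_n\}$. By Prokhorov, any subsequence admits a weakly convergent sub-subsequence with limit $\tilde\gamma \in \mathcal{P}([0,\infty))$; by the already proved continuity of $\mathcal{W}_d$, $\mathcal{W}_d\tilde\gamma = \psi$. From $\lim_{z \to \infty}\mathcal{W}_d\tilde\gamma(z) = \tilde\gamma(\{0\}) = 0$ and the normalization inherited from $\psi(1) = 1/2$, the limit satisfies $\tilde\gamma \in \mathcal{P}_{\mathcal{W}_d}$, so injectivity of $\mathcal{W}_d$ forces $\tilde\gamma = \gamma$ and the subsequence principle closes the argument.

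Continuity of $\xi_d$ is obtained via compactness of $(\mathcal{C}^d, d_\infty)$: it suffices to show that every $d_\infty$-subsequential limit of $\{C_{\psi_n}\}$ equals $C_\psi$. For $\mathbf{x} \in (0,1]^d$, uniform convergence $\psi_n \to \psi$ combined with strict monotonicity of $\psi$ on $[0,\varphi(0))$ yields pointwise convergence of the pseudo-inverses $\varphi_n(x_i) \to \varphi(x_i)$, and continuity of $\psi$ on $[0,\infty]$ then gives $C_{\psi_n}(\mathbf{x}) = \psi_n(\sum_i \varphi_n(x_i)) \to C_\psi(\mathbf{x})$; the remaining boundary points with some $x_i = 0$ are trivial since both copulas vanish there.

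The subtlest step is continuity of $\xi_d^{-1}$. Assuming $C_n := C_{\psi_n} \to C_\psi$ in $d_\infty$, the plan is to recover $\psi_n$ from the bivariate marginal $K_n := C_n^{1:2}$ by iteration. Setting $K_n^{(1)}(x) := x$ and $K_n^{(j)}(x) := K_n(K_n^{(j-1)}(x), x)$, a straightforward induction exploiting the normalization $\varphi_n(1/2) = 1$ yields the identity $K_n^{(j)}(x) = \psi_n(j \varphi_n(x))$ and hence the explicit formula
\[
\psi_n(j/m) \;=\; K_n^{(j)}\bigl((K_n^{(m)})^{-1}(1/2)\bigr)
\]
for all positive integers $j, m$. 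Uniform convergence $K_n \to K := C_\psi^{1:2}$ propagates to each iterate $K_n^{(j)}$ (composition of uniformly convergent continuous functions with a continuous limit preserves uniform convergence) and to the pseudo-inverse of $K_n^{(m)}$ at the point $1/2$ (which lies in the strict-increase range of $K^{(m)}$), giving $\psi_n(r) \to \psi(r)$ for every positive rational $r$. Monotonicity of the $\psi_n$ and continuity of $\psi$ extend this to pointwise convergence on $[0,\infty)$, and a final Pólya step upgrades to uniform convergence. The main technical obstacle lies in bookkeeping the iterated compositions and inversions rigorously — particularly in the non-strict case, where the iterates $K_n^{(m)}$ can have a flat zero region, so that continuity of $K^{(m)}$ and its strict monotonicity on a neighbourhood of $(K^{(m)})^{-1}(1/2)$ must be checked explicitly.
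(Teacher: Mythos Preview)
Your proposal is correct, but it takes a substantially different route from the paper's proof. The paper's argument is purely citation-based: bijectivity and bicontinuity of $\mathcal{W}_d$ are read off from \cite[Theorems~5.1 and~5.9]{mult_arch}, and bijectivity and bicontinuity of $\xi_d$ from the normalization convention together with \cite[Theorem~4.1]{mult_arch}; the composition statement is then immediate. No direct analysis of weak convergence, tightness, or generator iteration is carried out in the paper itself.

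Your proof, by contrast, is self-contained. For $\mathcal{W}_d$ you use Portmanteau plus a P\'olya/Dini-type upgrade in one direction and a tightness--Prokhorov--subsequence argument in the other; both steps are carried out correctly, including the verification that any subsequential weak limit $\tilde\gamma$ lies in $\mathcal{P}_{\mathcal{W}_d}$. For $\xi_d$ you combine compactness of $(\mathcal{C}^d,d_\infty)$ with convergence of pseudo-inverses in the forward direction, and for $\xi_d^{-1}$ you recover $\psi_n$ on the positive rationals via the iteration identity $K_n^{(j)}(x)=\psi_n(j\varphi_n(x))$ and the normalization $\varphi_n(1/2)=1$; the induction, the identification $(K_n^{(m)})^{-1}(1/2)=\psi_n(1/m)$ (valid since $1/m<1\leq\varphi_n(0)$), and the propagation of uniform convergence through finitely many compositions and through inversion at a point of strict increase are all sound. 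What your approach buys is independence from the results in \cite{mult_arch} and an explicit, hands-on mechanism (the iteration trick) for extracting the generator from the copula; what the paper's approach buys is brevity and modularity, deferring the analytic work to the cited reference.
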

\begin{proof}
According to \cite[Theorem 5.1.]{mult_arch} the map $\mathcal{W}_d$ is a bijection, hence 
applying \cite[Theorem 5.9.]{mult_arch} yields that both $\mathcal{W}_d$ and its inverse are continuous. 
Working with normalized gene\-rators and again following \cite{mult_arch} implies that the map $\xi_d$ is bijective.
Using \cite[Theorem 4.1]{mult_arch} yields continuity of $\xi_d$ and its inverse $\xi_d^{-1}$. 
Since compositions of homeomorphisms are homeomorphisms, this completes the proof.
\end{proof}
It is straightforward to derive from equation (\ref{eq:williamson_transform}) that for every $r \in (0,\infty)$
we have $\gamma([0,r))=0$ if, and only if $\psi(\frac{1}{r})=0$. Moreover, 
upon \cite[Lemma 5.5]{mult_arch} stating that $C \in \mathcal{C}_{ar}^d$ is strict if, and only if
the support of the corresponding Williamson measure $\gamma \in \mathcal{P}_{\mathcal{W}_d}$ contains the point $0$, 
we may define the family of all Williamson measures associated with strict $d$-dimensional Archimedean copulas by
    $$
    \mathcal{P}_{\mathcal{W}_d}^s := \{\gamma \in \mathcal{P}_{\mathcal{W}_d} \colon \gamma((0,r)) > 0 \text{ for every } r>0\}
    $$
    and the family of all Williamson measures associated with non-strict Archimedean copulas by
    $$
    \mathcal{P}_{\mathcal{W}_d}^n := \{\gamma \in \mathcal{P}_{\mathcal{W}_d} \colon \exists r > 0 \text{ with } \gamma((0,r)) = 0\}.
    $$
Finally, $\mathcal{P}_{\mathcal{W}_d}^{fs}$ will denote the set of all normalized $d$-Williamson measures 
having full support $[0,\infty)$, the families of all (purely) absolutely continuous, discrete and 
singular normalized $d$-Williamson measures will be denoted by $\mathcal{P}_{\mathcal{W}_d}^{abs}$, $\mathcal{P}_{\mathcal{W}_d}^{dis}$ and $\mathcal{P}_{\mathcal{W}_d}^{sing}$, respectively.

Throughout this article we will frequently work with the level sets of Archimedean copulas $C$, i.e., the sets 
of all points $\mathbf{x} \in \mathbb{I}^d$ with $C(\mathbf{x})=t$ for some predefined $t \in \mathbb{I}$. 
For fixed $t \in (0,1]$ the $t$-level set $L_t^{1:d}:=L_t$ of $C \in \mathcal{C}_{ar}^d$ is given by
\begin{align}
\nonumber L_t &:= \{(\mathbf{x},y) \in \mathbb{I}^{d-1} \times \mathbb{I} \colon C(\mathbf{x},y) = t\} \\& =
\left\{(\mathbf{x},y) \in \mathbb{I}^{d-1} \times \mathbb{I} \colon \sum_{i=1}^{d-1}\varphi(x_i) + \varphi(y) = \varphi(t)\right\}
\end{align}
and for $t = 0$ by
\begin{align}
\nonumber L_0 &:= \{(\mathbf{x},y) \in \mathbb{I}^{d-1} \times \mathbb{I} \colon C(\mathbf{x},y) = 0\} \\& =
\left\{(\mathbf{x},y) \in \mathbb{I}^{d-1} \times \mathbb{I} \colon \sum_{i=1}^{d-1}\varphi(x_i) + \varphi(y) \geq \varphi(0)\right\}.
\end{align}
The $t$-level sets of the marginal copula $C^{1:d-1}$ are defined analogously and denoted by $L_t^{1:d-1}$ for 
every $t \in \mathbb{I}$. Moreover, the function $f^0\colon \mathbb{I}^{d-1} \rightarrow \mathbb{I}$, whose graph coincides with the upper boundary of $L_0$, is defined by
\begin{align}\label{eq:f0}
f^0(\mathbf{x}) &:= \begin{cases}
    1,& \text{ if } \mathbf{x} \in L_0^{1:d-1},\\
    \psi\left(\varphi(0) - \sum_{i = 1}^{d-1}\varphi(x_i)\right),& \text{ if } \mathbf{x} \not\in L_0^{1:d-1},
\end{cases}
\end{align}
whereby we set $\psi(u) = 1$ for all $u < 0$. Notice that equation (\ref{eq:f0}) holds both, in the strict and the 
non-strict setting: In the strict case we have $\varphi(0)=\infty$, so for $\mathbf{x} \not\in L_0^{1:d-1}$
we have $\sum_{i = 1}^{d-1}\varphi(x_i) \in [0,\infty)$, implying $\varphi(0) - \sum_{i = 1}^{d-1}\varphi(x_i)=\infty$ and $f^0(\mathbf{x})=\psi(\infty)=0$; hence in the strict case we 
have $f^0(\mathbf{x}) \in \{0,1\}$ for every $\mathbf{x} \in \mathbb{I}^{d-1}$. Contrary to that, in 
the non-strict case $f^0$ also attains values in $(0,1)$. \\
For $t \in (0,1]$ we will also use the upper $t$-cut $[C]_t$ of $C$, defined by 
$$[C]_t := \{\mathbf{x} \in \mathbb{I}^{d} \colon C(\mathbf{x}) \geq t\}.$$
The upper $t$-cuts of marginal copulas are defined analogously.  
The so-called $t$-level function $f^t \colon [C^{1:d-1}]_t \rightarrow \mathbb{I}$ is given by
\begin{equation}
f^t(\mathbf{x}) :=
    \psi\left(\varphi(t) - \sum_{i=1}^{d-1}\varphi(x_i)\right)
\end{equation}
for every $\mathbf{x} \in [C^{1:d-1}]_t$. It is straightforward to verify that for $t \in (0,1]$ the graph of $f^t$ coincides with $L_t$. 
Moreover, as proved in the next section, the discrete component of 
an Archimedean copula (if any) is always concentrated on the graphs of some $f^t$ with $t \in [0,1]$. 
According to \cite{mult_arch,neslehova} the mass of the $t$-level sets can be explicitly calculated - in fact, 
for every $t \in (0,1)$ 
\begin{equation}\label{eq:lvl_set_mass_derivative_t}
    \mu_C(L_t) =  \frac{(-\varphi(t))^{d-1}}{(d-1)!}(D^-\psi^{(d-2)}(\varphi(t)) - D^-\psi^{(d-2)}(\varphi(t)+))
\end{equation}
holds and for $t=0$ we have  
\begin{equation}\label{eq:lvl_set_mass_derivative_0}
    \mu_C(L_0) =  \frac{(-\varphi(0))^{d-1}}{(d-1)!}D^-\psi^{(d-2)}(\varphi(0))
\end{equation}
in the non-strict case as well as $\mu_C(L_0) = 0$ in the strict case. For $t=1$ we obviously have 
$L_1=\{(1,\ldots,1)\}$, implying $\mu_C(L_1)=0$.  \\
Given $0<s<t\leq 1$, define the set 
$$
L_{[s,t]} := \left\{(\mathbf{x},y) \in \mathbb{I}^{d-1} \times \mathbb{I} \colon C(\mathbf{x},y) \in [s,t] \right\}.
$$
Extending the definition of $f^s$ to full $\mathbb{I}^{d-1}$ by setting $f^s(\mathbf{x}) = 1$ for 
$\mathbf{x}\in  \mathbb{I}^{d-1} \setminus [C^{1:d-1}]_s$ 
obviously the set $L_{[s,t]}$ may also be expressed as 
$$
L_{[s,t]} = \left\{(\mathbf{x},y) \in \mathbb{I}^{d-1} \times \mathbb{I}: f^s(\mathbf{x}) \leq y \leq f^t(\mathbf{x}) \right\}.
$$

Considering a $d$-dimensional Archimedean copula $C$ with generator $\psi$, then according to \cite{mult_arch,neslehova} the Kendall distribution function $F_K^d$ of $C$ can explicitly be calculated 
and is given by
$$
F_K^d(t) = D^-\psi^{(d-2)}(\varphi(t))\cdot\frac{(-1)^{d-1}}{(d-1)!}\varphi(t)^{d-1} + \sum_{k = 0}^{d-2}\psi^{(k)}(\varphi(t))\frac{(-1)^k}{k!}\varphi(t)^k
$$
for every $t \in (0,1]$ and by
$$
F_K^d(0) = \mu_C(L_0) =  \frac{(-\varphi(0))^{d-1}}{(d-1)!}D^-\psi^{(d-2)}(\varphi(0))
$$
for $t = 0$. Throughout this contribution we denote the probability measure corresponding to the 
Kendall distribution function $F_K^d$ by $\kappa_{F_K^d}$.
As proved in \cite[Theorem 5.6]{mult_arch}, the Kendall distribution function $F_K^d$
and the mass of the $t$-level sets $L_t$ can be represented directly via the Williamson measure.  
The following result is a compressed version of \cite[Theorem 5.6]{mult_arch} using the conventions mentioned 
at the beginning of Section 2.2. 
\begin{theorem}\label{thm:lvl_sets_kendall_williamson}
    Let $C$ be a $d$-dimensional Archimedean copula with generator $\psi$ and Williamson measure $\gamma$. 
    Then (in the strict and the non-strict case) we have that
     \begin{equation}\label{eq:t-level-set}
        \mu_C(L_t) = \gamma\left(\left\{\frac{1}{\varphi(t)}\right\}\right)
    \end{equation}
holds for every $t\in \mathbb{I}$. 
Moreover, the Kendall distribution function $F_K^d$ of $C$ fulfills
\begin{equation}\label{eq:kendall_williamson}
F_K^d(t) = \gamma\left(\left[0,\frac{1}{\varphi(t)}\right]\right)
\end{equation}
for every $t \in \mathbb{I}$.
\end{theorem}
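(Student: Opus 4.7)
The plan is to derive both identities directly from the Williamson representation \eqref{eq:williamson_transform} of $\psi$, starting by extracting explicit formulas for the derivatives of $\psi$ in terms of the Williamson measure $\gamma$. By iterated differentiation under the integral sign (justified via dominated convergence, since on every compact interval the integrands $(1-sz)_+^{d-1-k}$ and their one-sided derivatives are uniformly bounded), one obtains
$$\psi^{(k)}(z) \;=\; (-1)^k\frac{(d-1)!}{(d-1-k)!}\int_{[0,\infty)} s^k(1-sz)_+^{d-1-k}\,\mathrm{d}\gamma(s), \qquad k=0,\ldots,d-2.$$
For $k=d-2$ the integrand is piecewise linear in $z$, so the one-sided derivatives can be taken pointwise inside the integral, giving
$$D^{\mp}\psi^{(d-2)}(z) \;=\; (-1)^{d-1}(d-1)!\int_{I^{\mp}(z)} s^{d-1}\,\mathrm{d}\gamma(s),$$
with $I^-(z)=[0,1/z]$ and $I^+(z)=[0,1/z)$.

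For the first claim I would substitute $z=\varphi(t)$ and note that the jump $D^-\psi^{(d-2)}(\varphi(t))-D^+\psi^{(d-2)}(\varphi(t))$ equals $(-1)^{d-1}(d-1)!\,\varphi(t)^{-(d-1)}\,\gamma(\{1/\varphi(t)\})$, since only the singleton $\{1/\varphi(t)\}$ contributes to the difference of the integrals. Plugging into \eqref{eq:lvl_set_mass_derivative_t} the constants cancel and one recovers $\mu_C(L_t)=\gamma(\{1/\varphi(t)\})$ for $t\in(0,1)$. The boundary cases follow from the paper's conventions: for $t=1$ we have $1/\varphi(1)=\infty$ so both sides vanish, while for $t=0$ in the strict case $1/\varphi(0)=0$ gives $\gamma(\{0\})=0$ by definition of $\mathcal{P}_{\mathcal{W}_d}$; in the non-strict case $\psi\equiv 0$ on $[\varphi(0),\infty)$ forces $D^+\psi^{(d-2)}(\varphi(0))=0$, after which the same cancellation as in the interior case applies to \eqref{eq:lvl_set_mass_derivative_0}.

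For the Kendall identity I would substitute the above representations into the formula for $F_K^d(t)$ stated in the paper, obtaining
$$F_K^d(t) = \int_{[0,1/\varphi(t)]}(s\varphi(t))^{d-1}\,\mathrm{d}\gamma(s) + \sum_{k=0}^{d-2}\binom{d-1}{k}\int_{[0,\infty)}(s\varphi(t))^{k}(1-s\varphi(t))_+^{d-1-k}\,\mathrm{d}\gamma(s).$$
The key simplification is the splitting at $s=1/\varphi(t)$: for $s\varphi(t)>1$ all truncated powers vanish, while for $s\varphi(t)\le 1$ the binomial identity $\sum_{k=0}^{d-1}\binom{d-1}{k}u^k(1-u)^{d-1-k}=1$ with $u:=s\varphi(t)$ shows that the $k\in\{0,\ldots,d-2\}$ terms contribute $1-(s\varphi(t))^{d-1}$. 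Combined with the first summand, the integrand collapses to $\mathbf{1}_{[0,1/\varphi(t)]}(s)$, yielding $F_K^d(t)=\gamma([0,1/\varphi(t)])$. The edge cases $t\in\{0,1\}$ are handled by the same conventions as above.

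The main obstacle is not the analysis itself but the careful bookkeeping of the boundary conventions $1/0=\infty$, $1/\infty=0$, $\gamma(\{0\})=0$ and $\gamma(\{\infty\})=0$, making sure the strict and non-strict cases are treated consistently and that the right-limit identity $D^-\psi^{(d-2)}(\varphi(t)+)=D^+\psi^{(d-2)}(\varphi(t))$ (a standard fact for convex functions) is used to match \eqref{eq:lvl_set_mass_derivative_t} with the Williamson jump formula.
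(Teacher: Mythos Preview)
The paper does not give its own proof of this theorem; it is stated as ``a compressed version of \cite[Theorem 5.6]{mult_arch}'' and simply quoted from that reference. So there is no in-paper argument to compare against.

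Your proposal is correct and is the natural direct route: differentiate the Williamson transform \eqref{eq:williamson_transform} to obtain the formulas for $\psi^{(k)}$ and $D^{\mp}\psi^{(d-2)}$ (the latter being exactly Lemma~\ref{lem:function_G}, i.e.\ \cite[Lemma 5.4]{mult_arch}), then substitute into the explicit level-set formulas \eqref{eq:lvl_set_mass_derivative_t}--\eqref{eq:lvl_set_mass_derivative_0} and the Kendall formula preceding the theorem. The binomial collapse $\sum_{k=0}^{d-1}\binom{d-1}{k}u^k(1-u)^{d-1-k}=1$ is the clean way to handle the Kendall sum, and your treatment of the boundary cases via the conventions $1/0=\infty$, $1/\infty=0$, $\gamma(\{0\})=\gamma(\{\infty\})=0$ is accurate. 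One minor point: for $t=0$ in the non-strict case you could also argue directly from $\gamma([0,1/\varphi(0)))=0$ (a fact the paper records just after \eqref{eq:williamson_transform}) rather than via $D^+\psi^{(d-2)}(\varphi(0))=0$, but both work.
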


We conclude this section with an explicit expression for the $(d-1)$-Markov kernel of Archimedean copulas as
derived in \cite[Theorem 3.1]{mult_arch}; this expression will be key in the next sections.
Suppose that $C \in \mathcal{C}_{ar}^d$ and let $\psi$ and $\varphi$ be its associated generator and pseudo-inverse, respectively. Then (a version of) the Markov-kernel $K_C$ of $C$ is given by
    \begin{align}\label{eq:markov_kernel_arch}
    K_C(\mathbf{x},[0,y]) := \begin{cases}
        1,& M(\mathbf{x}) = 1 \text{ or } \mathbf{x} \in L_0^{1:d-1}\\
        0,& M(\mathbf{x}) < 1, \mathbf{x} \notin L_0^{1:d-1},y < f^0(\mathbf{x})\\
        \frac{D^-\psi^{(d-2)}\left(\sum_{i=1}^{d-1}\varphi(x_i) + \varphi(y)\right)}{D^-\psi^{(d-2)}\left(\sum_{i=1}^{d-1}\varphi(x_i)\right)},& M(\mathbf{x}) < 1, \mathbf{x}\notin L_0^{1:d-1},y \geq f^0(\mathbf{x}),
    \end{cases}
    \end{align}
    for every $\mathbf{x} \in \mathbb{I}^{d-1}$ and every $y \in \mathbb{I}$, see \cite[Theorem 3.1]{mult_arch}.
Notice that (again see \cite{mult_arch}) in the first line of equation (\ref{eq:markov_kernel_arch}) 
we could replace the constant $1$ by $F(y)$ for any univariate distribution function $F$ since 
$L_0^{1:d-1}$ fulfills $\mu_{C^{1:d-1}}(L_0^{1:d-1})=0$ and the $(d-1)$-kernel $K_C$ is only unique outside a set of 
$\mu_{C^{1:d-1}}$-measure $0$.
     
\section{Regularity and mass distributions of multivariate Archimedean copulas}\label{section:regularity}
Looking at the Williamson $d$-transform (\ref{eq:williamson_transform}) not surprisingly 
$D^{-}\psi^{(d-2)}$ can also be directly expressed in terms of the corresponding Williamson measure. 
In fact, following \cite[Lemma 5.4]{mult_arch} for every $d$-dimensional Archimedean copula $C$ with generator
$\psi$ and (normalized) Williamson measure $\gamma$ the subsequent lemma holds: 
\begin{Lemma}\label{lem:function_G}
    Let $C \in \mathcal{C}_{ar}^d$ be an Archimedean copula and $\psi$ and $\gamma \in \mathcal{P}_{\mathcal{W}_d}$ be its corresponding generator and Williamson measure, respectively. Then
    \begin{equation}\label{eq:function_G}
    0 \geq G(z) := (-1)^{d-2}D^{-}\psi^{(d-2)}(z) = -(d-1)!\int_{(0,\frac{1}{z}]} t^{d-1} \mathrm{d}\gamma(t)
    \end{equation}
    holds for every $z > 0$.
\end{Lemma}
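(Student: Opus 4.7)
The plan is to start from the Williamson representation
$\psi(z) = \int_{[0,\infty)}(1-tz)_+^{d-1}\,\mathrm{d}\gamma(t)$
provided by equation (\ref{eq:williamson_transform}) and differentiate under the integral first $(d-2)$ times in the two-sided sense and then once more from the left. Observe that for every $k \in \{0,\ldots,d-2\}$ the $k$-th partial derivative of $(1-tz)_+^{d-1}$ with respect to $z$ equals
$(-t)^k\tfrac{(d-1)!}{(d-1-k)!}(1-tz)_+^{d-1-k}$, which is continuous in $z$ since $d-1-k\geq 1$ and still differentiable whenever $k\leq d-3$.

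By induction on $k$ I would show that for every $z>0$ and every $k\in\{0,\ldots,d-2\}$
\[
\psi^{(k)}(z) = (-1)^k\,\frac{(d-1)!}{(d-1-k)!}\int_{[0,\infty)} t^k(1-tz)_+^{d-1-k}\,\mathrm{d}\gamma(t).
\]
The inductive step consists in interchanging the (two-sided) difference quotient with the integral, which is justified by dominated convergence: fixing $z_0>0$ and restricting to $z\in[z_0/2,2z_0]$, both the integrand and its $z$-derivative vanish for $t>2/z_0$ and are bounded on the complementary set $(0,2/z_0]$ by a constant multiple of $t^k$, providing a uniform $\gamma$-integrable envelope because $(0,2/z_0]$ is bounded and $\gamma$ is a probability measure. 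Setting $k=d-2$ yields
\[
\psi^{(d-2)}(z) = (-1)^{d-2}(d-1)!\int_{[0,\infty)} t^{d-2}(1-tz)_+\,\mathrm{d}\gamma(t).
\]

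For the remaining left derivative I would first compute the left derivative of $z\mapsto(1-tz)_+$ pointwise in $t$: the function is $t$-Lipschitz on $[0,\infty)$, its left derivative at $z_0>0$ equals $-t$ for every $t\in(0,1/z_0]$ (the boundary case $t=1/z_0$ is handled by the direct computation $1-tz = -t(z-z_0)$ for $z<z_0$) and $0$ for $t>1/z_0$. The Lipschitz estimate $|(1-tz)_+-(1-tz')_+|\leq t|z-z'|$ gives the uniform envelope $t^{d-1}\mathbf{1}_{(0,2/z_0]}(t)$, which is $\gamma$-integrable, and another application of dominated convergence to the left difference quotient yields
\[
D^{-}\psi^{(d-2)}(z) = (-1)^{d-1}(d-1)!\int_{(0,\,1/z]} t^{d-1}\,\mathrm{d}\gamma(t).
\]
Multiplying both sides by $(-1)^{d-2}$ produces the desired identity for $G(z)$, and $G(z)\leq 0$ is immediate from non-negativity of the integrand on the right.

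The main technical point is the non-smoothness of $(1-tz)_+$ at $t=1/z_0$, which forces the last step to be one-sided and requires careful evaluation of the boundary case; all other differentiations are standard. Once the Lipschitz bound is at hand, the interchanges of limit and integral are routine because the relevant integrands have uniformly compact support in $t$ on neighborhoods of any fixed $z_0>0$, so integrability of the dominating function is automatic.
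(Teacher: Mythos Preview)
Your argument is correct. The paper does not actually prove this lemma: it merely cites \cite[Lemma 5.4]{mult_arch} and states the result, so there is no in-paper proof to compare against. Your approach---differentiating the Williamson representation $(d-2)$ times under the integral sign via dominated convergence, then taking one further left derivative and handling the kink of $(1-tz)_+$ at $t=1/z$ explicitly---is the natural direct route and all steps are justified. The only minor imprecision is the phrase ``bounded by a constant multiple of $t^k$'' for the derivative (which is actually of order $t^{k+1}$), but since $t$ ranges over the bounded set $(0,2/z_0]$ this makes no difference for the integrable envelope, and your conclusion stands.
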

Building upon the previous lemma and fixing $\gamma \in \mathcal{P}_{\mathcal{W}_d}$, 
for the case $M(\mathbf{x})<1, \,\mathbf{x} \not\in L_0^{1:d-1}$ and $f^0(\mathbf{x}) \leq y$ 
the Markov kernel $K_C$ of $C \in \mathcal{C}^d_{ar}$ according to equation \eqref{eq:markov_kernel_arch} can 
be expressed as
\begin{equation}\label{eq:kernel_function_G}
K_C(\mathbf{x},[0,y]) = \frac{G(\sum_{i = 1}^{d-1}\varphi(x_i) + \varphi(y))}{G(\sum_{i = 1}^{d-1}\varphi(x_i))} = \frac{\int_{I_y}t^{d-1}\mathrm{d}\gamma(t)}{\int_{I_1}t^{d-1}\mathrm{d}\gamma(t)},
\end{equation}
with the notation $I_y := \left(0,\frac{1}{\sum_{i=1}^{d-1}\varphi(x_i) + \varphi(y)}\right]$ for every $y \in \mathbb{I}$. 
The last expression in equation (\ref{eq:kernel_function_G}) is convenient insofar that 
(as proved in this section) it insinuates that the regularity of the measure $\gamma$ directly 
propagates to the Markov kernel $K_C$.
Consi\-dering the Lebesgue decomposition $\gamma = \gamma^{abs} + \gamma^{dis} + \gamma^{sing}$ of $\gamma \in \mathcal{P}_{\mathcal{W}_d}$, equation \eqref{eq:kernel_function_G} can be written as
\begin{align}\label{eq:kernel_function_G_abs_dis_sing}
\nonumber K_C(\mathbf{x},[0,y]) &= \frac{\int_{I_y}t^{d-1}\mathrm{d}\gamma(t)}{\int_{I_1}t^{d-1}\mathrm{d}\gamma(t)} \\&=
\underbrace{\frac{\int_{I_y}t^{d-1}\mathrm{d}\gamma^{abs}(t)}{\int_{I_1}t^{d-1}\mathrm{d}\gamma(t)}}_{=:H^{abs}_\mathbf{x}(y)} + 
\underbrace{\frac{\int_{I_y}t^{d-1}\mathrm{d}\gamma^{dis}(t)}{\int_{I_1}t^{d-1}\mathrm{d}\gamma(t)}}_{=:H^{dis}_\mathbf{x}(y)} + 
\underbrace{\frac{\int_{I_y}t^{d-1}\mathrm{d}\gamma^{sing}(t)}{\int_{I_1}t^{d-1}\mathrm{d}\gamma(t)}.}_{=:H^{sing}_\mathbf{x}(y)}
\end{align}
Notice that the definitions for $H^{abs}_\mathbf{x}, H^{dis}_\mathbf{x}, H^{sing}_\mathbf{x}$ in 
equation (\ref{eq:kernel_function_G_abs_dis_sing}) are merely definitions, it is a priori not clear
that the chosen notation is meaningful, i.e., that the function $H^{abs}_\mathbf{x}$ (if being non-degenerated) 
is absolutely continuous, that 
$H^{dis}_\mathbf{x}$ (if being non-degenerated) is discrete and that $H^{sing}_\mathbf{x}$ (if being non-degenerated) 
is singular. These points, however, will be established in the proofs of Lemma \ref{lem:absolutely_cont}, 
Lemma \ref{lem:sing} and in Remark \ref{rem:discrete}. \\

It has already been established (see \cite[Theorem 5.12]{mult_arch}) that absolute continuity, discreteness 
and singularity of the Williamson-measure $\gamma \in \mathcal{P}_{\mathcal{W}_d}$ propagates to the corresponding Archimedean copula $C \in \mathcal{C}_{ar}^d$. 
The goal of this section is to extend this result to the following equivalence 
(see Theorem \ref{thm:main_regularity_result}): 
the absolutely continuous/discrete/singular component of the Williamson measure $\gamma$ is non-degenerated if, 
and only if the absolutely continuous/discrete/singular component of the corresponding Archimedean copula $C$ is non-degenerated. Having this stronger result provides a simple proof of the reverse implication in 
\cite[Theorem 5.12]{mult_arch}. \\

We start with some preliminary observations which will be key for proving the afore-mentioned equivalence.   
The following Theorem is a consequence of Theorem \ref{thm:lvl_sets_kendall_williamson}.
\begin{theorem}\label{thm:mass_ex_lvl_set}
    Let $C$ be a $d$-dimensional Archimedean copula and $\gamma \in \mathcal{P}_{\mathcal{W}_d}$ its corresponding Williamson measure. Moreover, let $s_1,s_2 \in \mathbb{I}$, with $s_1 \leq s_2$. Then the following assertion holds:
    \begin{equation}\label{eq:mass_extended_level_set}
    \mu_C(L_{[s_1,s_2]}) = \gamma\left(\left[\frac{1}{\varphi(s_1)},\frac{1}{\varphi(s_2)}\right]\right).
    \end{equation}
\end{theorem}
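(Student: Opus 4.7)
The plan is to reduce the claim to the Kendall distribution formula (\ref{eq:kendall_williamson}) from Theorem \ref{thm:lvl_sets_kendall_williamson}. Observe first that, for $\mathbf{X}\sim C$, the set $L_{[s_1,s_2]}$ is precisely the event $\{s_1\le C(\mathbf{X})\le s_2\}$, so that
$$
\mu_C(L_{[s_1,s_2]})=\mathbb{P}\bigl(s_1\le C(\mathbf{X})\le s_2\bigr)=F_K^d(s_2)-F_K^d(s_1-).
$$
Applying (\ref{eq:kendall_williamson}) directly to the first summand gives $F_K^d(s_2)=\gamma\bigl([0,1/\varphi(s_2)]\bigr)$, so it remains to identify $F_K^d(s_1-)$ with $\gamma\bigl([0,1/\varphi(s_1))\bigr)$; subtracting then produces the desired half-open/closed interval $[1/\varphi(s_1),1/\varphi(s_2)]$.

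To identify $F_K^d(s_1-)$ I would use continuity of $\gamma$ together with monotonicity of $1/\varphi$. Since $\varphi$ is continuous and strictly decreasing on $(0,1]$, letting $t\uparrow s_1$ yields $\varphi(t)\downarrow\varphi(s_1)$ and hence $1/\varphi(t)\uparrow 1/\varphi(s_1)$. The sets $[0,1/\varphi(t)]$ therefore form an increasing family whose countable union is $[0,1/\varphi(s_1))$, and continuity of $\gamma$ from below combined with (\ref{eq:kendall_williamson}) gives
$$
F_K^d(s_1-)=\lim_{t\uparrow s_1}\gamma\bigl([0,1/\varphi(t)]\bigr)=\gamma\bigl([0,1/\varphi(s_1))\bigr).
$$
Subtracting from $F_K^d(s_2)=\gamma\bigl([0,1/\varphi(s_2)]\bigr)$ then delivers the identity (\ref{eq:mass_extended_level_set}).

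The main subtlety I anticipate is the boundary case $s_1=0$, where $F_K^d(0-)=0$ must be matched against $\gamma\bigl([0,1/\varphi(0))\bigr)$. In the strict setting $\varphi(0)=\infty$, so $1/\varphi(0)=0$ by convention and $\gamma([0,0))=\gamma(\emptyset)=0$ trivially. In the non-strict setting $\varphi(0)<\infty$, but the equivalence $\gamma([0,r))=0\iff\psi(1/r)=0$ recorded just after (\ref{eq:williamson_transform}), applied with $r=1/\varphi(0)$, forces $\gamma\bigl([0,1/\varphi(0))\bigr)=0$ as well, since $\psi(\varphi(0))=0$ follows from continuity of $\psi$ at $\varphi(0)=\inf\{z\colon\psi(z)=0\}$. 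Hence the formula extends uniformly to $s_1=0$, and no separate case analysis is required. The hard part, if any, is really just this boundary bookkeeping; the core argument is a one-line reduction to the Kendall representation already proved.
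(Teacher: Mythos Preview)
Your proposal is correct and follows essentially the same route as the paper: both reduce $\mu_C(L_{[s_1,s_2]})$ to $F_K^d(s_2)-F_K^d(s_1-)$ and then invoke the Kendall representation (\ref{eq:kendall_williamson}) from Theorem \ref{thm:lvl_sets_kendall_williamson}. If anything, your write-up is more complete, since you explicitly justify the left-limit identity $F_K^d(s_1-)=\gamma\bigl([0,1/\varphi(s_1))\bigr)$ via continuity of $\gamma$ from below and handle the boundary case $s_1=0$, whereas the paper absorbs both steps into the final equality without comment.
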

\begin{proof}
    Setting $M_t := \{(\mathbf{x},y) \in \mathbb{I}^d\colon C(\mathbf{x},y) \leq t\}$ for every 
    $t \in \mathbb{I}$ we obviously have $F_K^d(t) = \mu_C(M_t)$ for every $t \in \mathbb{I}$. 
    For $s_1 \leq s_2$ considering 
    $$
    L_{[s_1,s_2]} = M_{s_2} \cap [C]_{s_1} = M_{s_2} \setminus [C]_{s_1}^c, \quad [C]_{s_1}^c \subseteq M_{s_2},
    $$ 
    using Theorem 
    \ref{thm:lvl_sets_kendall_williamson} directly yields
    \begin{align*}
    \mu_C(L_{[s_1,s_2]}) &= \mu_C(M_{s_2}\setminus [C]_{s_1}^c) \\&=
    \mu_C(M_{s_2}) - \mu_C([C]_{s_1}^c) \\&=
    F_K^d(s_2) - F_K^d(s_1-) \\&=
    \gamma\left(\left[\frac{1}{\varphi(s_1)}, \frac{1}{\varphi(s_2)}\right]\right).
     \end{align*}
   \end{proof}
The next corollary states a close interrelation between the support of the Williamson measure 
$\gamma \in \mathcal{P}_{\mathcal{W}_d}$ and the support of the corresponding Archimedean copula 
$C \in \mathcal{C}_{ar}^d$. 
\begin{Cor}\label{cor:support_archimedean}
    Suppose that $C \in \mathcal{C}_{ar}^d$, let $\gamma \in \mathcal{P}_{\mathcal{W}_d}$ be its associated Williamson measure, $\psi$ its generator, and $\varphi$ the pseudo-inverse of $\psi$. 
    Then for $[a,b] \subseteq [\frac{1}{\varphi(0)},\infty)$ with $a<b$ the  
    following two conditions are equivalent:
    \begin{enumerate}
     \item $\gamma([a,b]) = 0$
     \item $\mu_C(L_{[\psi(\frac{1}{a}),\psi(\frac{1}{b})]}) = 0$
    \end{enumerate}
   \end{Cor}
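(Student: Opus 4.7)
The corollary will follow almost immediately from Theorem \ref{thm:mass_ex_lvl_set}, provided we show that the pairing $a \leftrightarrow \psi(\tfrac{1}{a})$ respects the transformation appearing in that theorem. Concretely, I would set $s_1:=\psi(\tfrac{1}{a})$ and $s_2:=\psi(\tfrac{1}{b})$ and aim to prove the equality
\begin{equation*}
\mu_C\left(L_{[\psi(\tfrac{1}{a}),\psi(\tfrac{1}{b})]}\right) \;=\; \gamma([a,b]),
\end{equation*}
from which $\textrm{(i)} \Leftrightarrow \textrm{(ii)}$ is immediate.

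First I would check the hypothesis of Theorem \ref{thm:mass_ex_lvl_set}. Since $\psi$ maps into $\mathbb{I}$, one has $s_1,s_2 \in \mathbb{I}$, and the non-increasing monotonicity of $\psi$ together with $a<b$ (so $\tfrac{1}{a}>\tfrac{1}{b}$ using the conventions $\tfrac{1}{0}:=\infty$, $\tfrac{1}{\infty}:=0$) yields $s_1\leq s_2$. Hence Theorem \ref{thm:mass_ex_lvl_set} applies and produces
\begin{equation*}
\mu_C\left(L_{[\psi(\tfrac{1}{a}),\psi(\tfrac{1}{b})]}\right) \;=\; \gamma\left(\left[\tfrac{1}{\varphi(\psi(1/a))},\,\tfrac{1}{\varphi(\psi(1/b))}\right]\right).
\end{equation*}

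The remaining step is to verify that $\tfrac{1}{\varphi(\psi(1/a))}=a$ and similarly for $b$, i.e.\ the identity $\varphi\circ\psi=\mathrm{id}$ holds on the relevant range of arguments. I would handle the strict and non-strict cases separately. In the strict case $\varphi(0)=\infty$, so $\tfrac{1}{\varphi(0)}=0$ and any $a\in[0,\infty)$ satisfies either $a=0$ (in which case $\tfrac{1}{a}=\infty$, $\psi(\infty)=0$, $\varphi(0)=\infty$, and $\tfrac{1}{\varphi(0)}=0=a$ by the conventions) or $a>0$, in which case $\tfrac{1}{a}<\infty$ and strict monotonicity of $\psi$ on $[0,\infty)$ together with the definition of $\varphi$ as the infimum yields $\varphi(\psi(\tfrac{1}{a}))=\tfrac{1}{a}$. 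In the non-strict case, $\varphi(0)<\infty$ and $a\geq\tfrac{1}{\varphi(0)}$ means $\tfrac{1}{a}\leq\varphi(0)=\inf\{x:\psi(x)=0\}$; for $\tfrac{1}{a}<\varphi(0)$ the strict monotonicity of $\psi$ on $[0,\varphi(0))$ again gives $\varphi(\psi(\tfrac{1}{a}))=\tfrac{1}{a}$, while for $a=\tfrac{1}{\varphi(0)}$ one has $\psi(\tfrac{1}{a})=0$ and $\varphi(0)=\tfrac{1}{a}$ directly.

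The only subtle point, which I consider the main (very mild) obstacle, is this endpoint bookkeeping at the boundary of the non-strict regime — specifically ensuring that at $a=\tfrac{1}{\varphi(0)}$ the identity $\tfrac{1}{\varphi(\psi(1/a))}=a$ still holds even though $\psi$ is flat (equal to $0$) beyond $\varphi(0)$. Once this is verified in both cases, combining the displayed identities yields $\mu_C(L_{[\psi(1/a),\psi(1/b)]})=\gamma([a,b])$, and the stated equivalence of the two conditions follows at once.
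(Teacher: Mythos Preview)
Your proposal is correct and follows exactly the approach of the paper, which simply states that the corollary is an ``immediate consequence of Theorem \ref{thm:mass_ex_lvl_set}''; you have merely made explicit the verification that $\varphi(\psi(1/a))=1/a$ on $[\tfrac{1}{\varphi(0)},\infty)$, including the endpoint case, which the paper leaves to the reader.
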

\begin{proof}
Immediate consequence of Theorem \ref{thm:mass_ex_lvl_set}.
\end{proof}
\begin{Rem}
\emph{The condition $[a,b] \subseteq [\frac{1}{\varphi(0)},\infty)$ can not be avoided. 
In fact, in the non-strict setting for the situation $b < \frac{1}{\varphi(0)}$ 
it follows that $\gamma([0,b])=0$, implying $\gamma([a,b])=0$. Nevertheless in this case we have 
$\psi(\frac{1}{a})=\psi(\frac{1}{b})=0$ but $\mu_C(L_{[0,0]})>0$ may hold.}
\end{Rem}
The subsequent example illustrates Corollary \ref{cor:support_archimedean}.
\begin{Ex}\label{ex:supp_arch_luecken}
\normalfont
    Consider the distribution function
     \begin{align*}
        F_\gamma(z) := \begin{cases}
        0, &\text{ if } z \in [0,\frac{1}{4}),\\
        \frac{2}{3}, &\text{ if } z \in [\frac{1}{4},1),\\
        \frac{1}{9}z+\frac{5}{9}, &\text{ if } z \in [1,2),\\
        \frac{7}{9}, &\text{ if } z \in [2,3),\\
        \frac{1}{9}z+\frac{4}{9}, &\text{ if } z \in [3,4),\\
        1, &\text{ if } z \geq 4.
        \end{cases}
    \end{align*}
    It is straightforward to verify that $F_\gamma$ is the distribution function of a unique 
    Williamson measure $\gamma \in \mathcal{P}_{\mathcal{W}_2}$, whose  
    induced non-strict generator $\psi$ is given by
     $$
     \psi(z) := \begin{cases}
        1 - \frac{7z}{6},  &\text{ if } z \in [0,\frac{1}{4}],\\
        \frac{3z^2+8z+1}{18z}, &\text{ if } z \in (\frac{1}{4},\frac{1}{3}],\\
        \frac{1}{9}(7-3z), &\text{ if } z \in (\frac{1}{3},\frac{1}{2}],\\
        -\frac{2z^2-10z-1}{18z}, &\text{ if } z \in (\frac{1}{2},1],\\
        \frac{2}{3}(1-\frac{z}{4}), &\text{ if } z \in (1,4],\\
        0, &\text{ if } z > 4.
        \end{cases}
    $$
The distribution function $F_\gamma$ and the generator $\psi$ are depicted in Figure \ref{fig:meas_gen_luecken}. Considering the support of the corresponding Archimedean copula $C_\psi$, we obtain that
    \begin{align*}
    \mathrm{supp}(\mu_{C_\psi}) &=
    \Gamma(f^0) \cup L_{[\frac{1}{2},\frac{11}{18}]} \cup L_{[\frac{2}{3},\frac{17}{24}]}.
    \end{align*}
    A sample of $C_\psi$ is depicted in Figure \ref{fig:dis_abs_cont_arch_luecken}.
\end{Ex}
\begin{figure}[!ht]
	\centering
\includegraphics[width=1\textwidth]{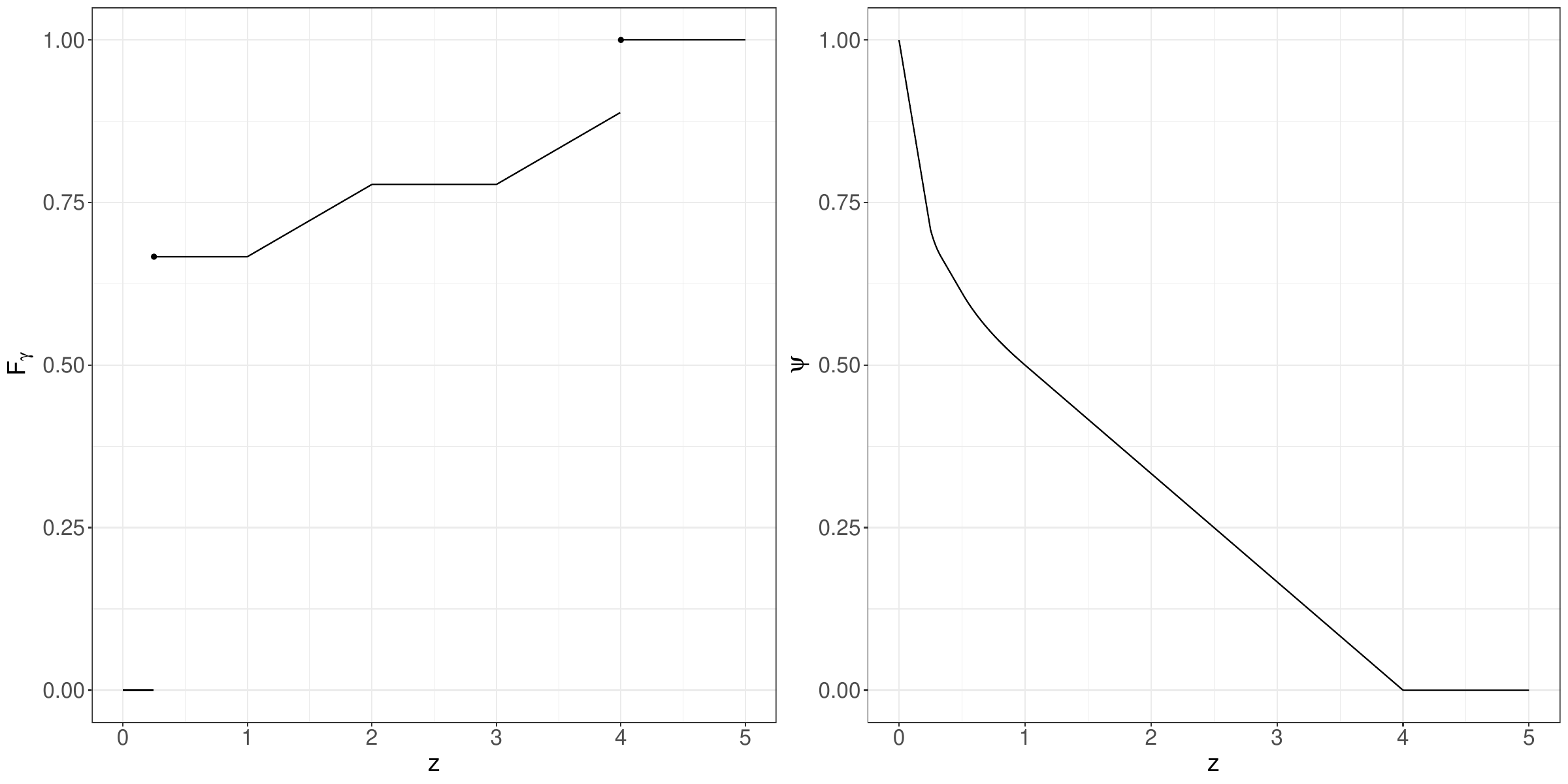}
	\caption{The distribution function $F_\gamma$ (left panel) and the associated generator $\psi$
	 (right panel) considered in Example \ref{ex:supp_arch_luecken}.}
\label{fig:meas_gen_luecken}
\end{figure}
\begin{figure}[!ht]
	\centering
\includegraphics[width=1\textwidth]{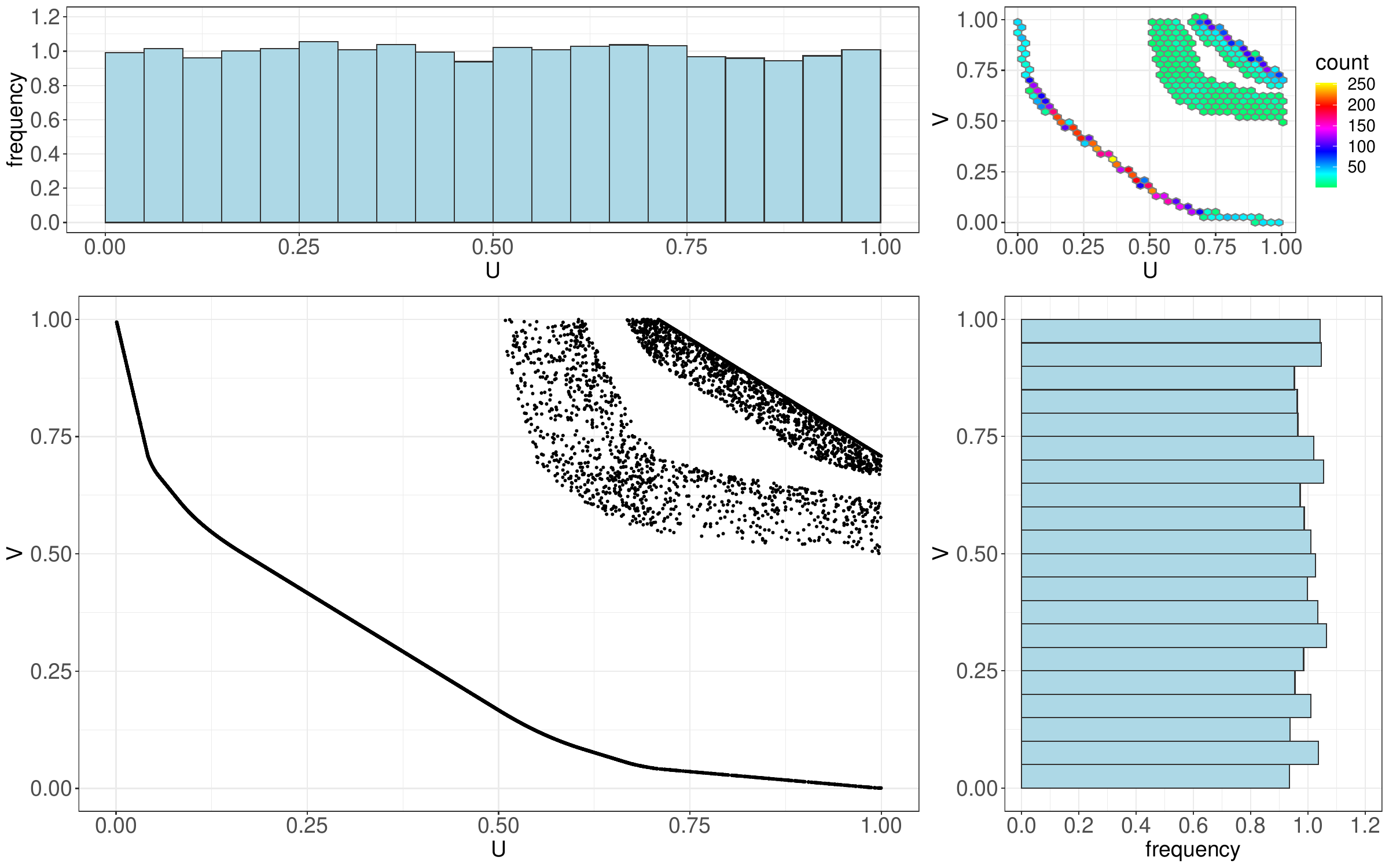}
	\caption{Sample of size 10000 of the Archimedean copula $C_\psi$ with $\psi$ being 
	the generator from Example \ref{ex:supp_arch_luecken}, its histogram and the two marginal histograms;  
     sample generated via conditional inverse sampling.}
\label{fig:dis_abs_cont_arch_luecken}
\end{figure}
According to \cite[Lemma 5.5]{mult_arch} strictness of an Archimedean copula $C$ can be characterized in terms 
of the corresponding Williamson measure. 
Corollary \ref{cor:support_archimedean} allows to extend this result and show that an Archimedean copula 
$C \in \mathcal{C}_{ar}^d$ has full support if, and only if its corresponding Williamson measure 
$\gamma$ has full support.
\begin{theorem}\label{thm:arch_full_support}
Let $C \in \mathcal{C}_{ar}^d$ and $\gamma \in \mathcal{P}_{\mathcal{W}_d}$ be its corresponding Williamson measure. Then $\gamma$ has full support $[0,\infty)$ if, and only if $C$ has full support $\mathbb{I}^d$.
\end{theorem}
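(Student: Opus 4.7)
The plan is to dispatch the non-strict case separately and then handle each direction in the strict setting via Corollary \ref{cor:support_archimedean} and the kernel representation \eqref{eq:kernel_function_G}. Suppose first that $\psi$ is non-strict, i.e.\ $\varphi(0)<\infty$. Then $\gamma$ automatically lacks full support on $[0,\infty)$ since $\gamma\bigl([0,1/\varphi(0))\bigr)=0$, as noted immediately before the definition of $\mathcal{P}_{\mathcal{W}_d}^s$. Simultaneously, the open set $V := \{\mathbf{x}\in\mathbb{I}^d : \sum_{i=1}^d\varphi(x_i)>\varphi(0)\}$ is non-empty and satisfies $\mu_C(V)=0$, because the Markov kernel \eqref{eq:markov_kernel_arch} vanishes on $[0,f^0(\mathbf{x}))$ whenever $\mathbf{x}\notin L_0^{1:d-1}$ and $L_0^{1:d-1}$ is a $\mu_{C^{1:d-1}}$-null set. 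The equivalence therefore holds trivially, and we may assume $\varphi(0)=\infty$.

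For the implication ``$\gamma$ has full support $\Rightarrow C$ has full support'', it suffices to show $\mu_C(B)>0$ for every open box $B=(a_1,b_1)\times\cdots\times(a_d,b_d)\subseteq(0,1)^d$ (any non-empty open subset of $\mathbb{I}^d$ contains such a box). Disintegration \eqref{eq:DI} yields
\[
\mu_C(B) = \int_{B^{1:d-1}} K_C(\mathbf{x},(a_d,b_d))\,\mathrm{d}\mu_{C^{1:d-1}}(\mathbf{x}).
\]
For $\mathbf{x}\in B^{1:d-1}$, the strict-case representation \eqref{eq:kernel_function_G} makes $K_C(\mathbf{x},(a_d,b_d))$ proportional, with a positive normalization factor, to the $t^{d-1}$-weighted $\gamma$-mass of a non-degenerate subinterval of $(0,\infty)$, which is strictly positive by full support of $\gamma$. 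Moreover, by \cite[Proposition~4.1]{neslehova} the marginal $C^{1:d-1}$ is absolutely continuous, and the chain rule applied to $C^{1:d-1}(\mathbf{x})=\psi\bigl(\sum_{i=1}^{d-1}\varphi(x_i)\bigr)$ gives (a version of) the density
\[
c^{1:d-1}(\mathbf{x}) = \prod_{i=1}^{d-1}\varphi'(x_i)\cdot D^{-}\psi^{(d-2)}\Bigl(\sum_{i=1}^{d-1}\varphi(x_i)\Bigr)
\]
a.e.\ on $(0,1)^{d-1}$. Lemma \ref{lem:function_G} together with full support of $\gamma$ forces $D^{-}\psi^{(d-2)}(z)\neq 0$ for every $z>0$, while $\varphi'(x_i)<0$ wherever defined, since $\varphi$ is strictly decreasing and convex on $(0,1]$. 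Consequently $c^{1:d-1}>0$ a.e.\ on $(0,1)^{d-1}$, so $\mu_{C^{1:d-1}}(B^{1:d-1})>0$ and $\mu_C(B)>0$.

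For the converse I argue by contrapositive. If $\gamma$ fails to have full support, then in the strict setting there exist $0<a<b$ with $\gamma([a,b])=0$. Corollary \ref{cor:support_archimedean} yields $\mu_C\bigl(L_{[\psi(1/a),\psi(1/b)]}\bigr)=0$, and strict monotonicity of $\psi$ on $[0,\infty)$ ensures $\psi(1/a)<\psi(1/b)$. Continuity of $C$ on the connected set $\mathbb{I}^d$ together with the intermediate value theorem then produces a non-empty open set $W:=C^{-1}\bigl((\psi(1/a),\psi(1/b))\bigr)\subseteq L_{[\psi(1/a),\psi(1/b)]}$ with $\mu_C(W)=0$, contradicting full support of $C$.

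The principal obstacle is establishing $\mu_{C^{1:d-1}}(B^{1:d-1})>0$ in the forward direction; here Lemma \ref{lem:function_G} makes the sign of $D^{-}\psi^{(d-2)}$ fully explicit and provides a cleaner argument than an induction on $d$ would.
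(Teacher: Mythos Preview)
Your proof is correct and follows essentially the same strategy as the paper: the forward direction via disintegration and positivity of the kernel from \eqref{eq:kernel_function_G}, the reverse via Corollary~\ref{cor:support_archimedean}. Your organization differs slightly---you dispatch the non-strict case upfront so that both directions live in the strict world, whereas the paper only invokes non-strictness as a sub-case in the converse---and you are more explicit in two spots: you verify $\mu_{C^{1:d-1}}(B^{1:d-1})>0$ directly via the marginal density and Lemma~\ref{lem:function_G} (the paper leaves this implicit), and you spell out the open null set $W=C^{-1}((\psi(1/a),\psi(1/b)))$ where the paper simply says Corollary~\ref{cor:support_archimedean} ``yields a contradiction''.
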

\begin{proof}
    (i) Suppose that $\gamma \in \mathcal{P}_{\mathcal{W}_d}$ has full support. 
    Then its corresponding distribution function $F_\gamma$ is strictly increasing, so according 
    to \cite[Lemma 5.5]{mult_arch} the corresponding Archimedean copula $C \in \mathcal{C}_{ar}^d$ is strict. 
    Fix $\mathbf{x} \notin L_0^{1:d-1}$ with $M(\mathbf{x}) < 1$ and suppose that $y_1,y_2 \in \mathbb{I}$ 
    fulfill $f^0(\mathbf{x})=0 \leq y_1 < y_2 \leq 1$. 
    Using the fact that $\gamma$ has full support we have $\gamma(I_{y_2} \setminus I_{y_1}) > 0$, which implies 
    \begin{align*}
    \int_{I_{y_1}}t^{d-1}\mathrm{d}\gamma(t) < \int_{I_{y_2}}t^{d-1}\mathrm{d}\gamma(t).
    \end{align*}
    Having this, using equation (\ref{eq:kernel_function_G}) $K_C(\mathbf{x},(y_1,y_2]) > 0$ follows
    and we have shown that the conditional distribution function $y \mapsto K_C(\mathbf{x},[0,y])$ 
    is strictly increasing. Considering $\mu_{C^{1:d-1}}((L_0^{1:d-1})^c \cap M^{-1}([0,1))) = 1$ 
    it follows that for $\mu_{C^{1:d-1}}$-almost every $\mathbf{x} \in \mathbb{I}^{d-1}$
    the conditional distribution function $y \mapsto K_C(\mathbf{x},[0,y])$ is strictly increasing.  
    Letting $R=(\underline{x}_1,\overline{x}_1) \times (\underline{x}_2,\overline{x}_2) \times \cdots 
    \times (\underline{x}_{d-1},\overline{x}_{d-1}) \times (\underline{y},\overline{y}) \subset \mathbb{I}^d$ 
    denote an arbitrary open rectangle with $\lambda_d(R)>0$, writing 
    $R^{1:d-1}:=(\underline{x}_1,\overline{x}_1) \times (\underline{x}_2,\overline{x}_2) \times \cdots 
    \times (\underline{x}_{d-1},\overline{x}_{d-1}) $ and applying disintegration yields 
    \begin{align*}
    \mu_C(R) = \int_{R^{1:d-1}} \underbrace{K_C(\mathbf{x},(\underline{y},\overline{y}))}_{>0} 
    \mathrm{d}\mu_{C^{1:d-1}}(\mathbf{x})>0.
\end{align*}     
In other words: Every open rectangle with positive volume has positive mass. 
This shows $\mathrm{supp}(\mu_C) = \mathbb{I}^d$ and completes the proof of the first implication. \\
Considering the reverse direction, suppose that $\gamma$ would not have full support. Then we could find 
some $a,b \in [0,\infty)$ with $a < b$ such that $\gamma([a,b]) = 0$. If 
$[a,b] \subseteq [\frac{1}{\varphi(0)},\infty)$,  Corollary \ref{cor:support_archimedean} directly yields 
a contradiction. If $a < \frac{1}{\varphi(0)}$ then $\varphi$ has to be non-strict, implying 
that $L_0$ has non-empty interior, so $\mathrm{supp}(\mu_C) \neq \mathbb{I}^d$.
\end{proof}
As next step we prove that the discrete component (if any) of an Archimedean copula $C$ is always concentrated on the graph of the $t$-level set functions $f^t$ for some $t \in [0,1)$. 
\begin{Lemma}\label{lem:equivalence_point_mass}
    Let $C \in \mathcal{C}_{ar}^d$, $\psi$ be its generator, $\gamma \in \mathcal{P}_{\mathcal{W}_d}$ its corresponding Williamson measure and consider $t_0 \in (0,1)$ in case that $\psi$ is strict and $t_0 \in [0,1)$ in case that $\psi$ is non-strict. Then the following assertions are equivalent:
    \begin{itemize}
        \item[(i)]  $\gamma\left(\left\{\frac{1}{\varphi(t_0)}\right\}\right)>0$,
        \item[(ii)] $\varphi(t_0)$ is a point of discontinuity of $D^{-}\psi^{(d-2)}$,
        \item[(iii)] There exists some set $\Lambda \in \mathcal{B}(\mathbb{I}^{d-1})$ with $\mu_{C^{1:d-1}}(\Lambda) > 0$ such that for all $\mathbf{x} \in \Lambda$ $K_C(\mathbf{x},\{f^{t_0}(\mathbf{x})\}) > 0$ holds,
        \item[(iv)] $\mu_C(L_{t_0}) > 0$.
    \end{itemize}
\end{Lemma}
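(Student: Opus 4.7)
The plan is to establish the four equivalences by a short loop (i)$\Leftrightarrow$(iv)$\Leftrightarrow$(iii) together with (i)$\Leftrightarrow$(ii) rather than checking all six implications. The equivalence (i)$\Leftrightarrow$(iv) is essentially free: equation (\ref{eq:t-level-set}) in Theorem \ref{thm:lvl_sets_kendall_williamson} literally reads $\mu_C(L_{t_0}) = \gamma(\{1/\varphi(t_0)\})$.

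For (i)$\Leftrightarrow$(ii) my plan is to quantify the jumps of $G = (-1)^{d-2} D^{-}\psi^{(d-2)}$ via Lemma \ref{lem:function_G}. Continuity of measure applied to the family $(0, 1/z]$ (which is decreasing as $z\to z_0^+$ to the open interval $(0,1/z_0)$ and increasing as $z\to z_0^-$ to $(0,1/z_0]$) immediately gives left-continuity of $G$ and
$$
G(z_0+) - G(z_0) \;=\; (d-1)!\,\bigl(\tfrac{1}{z_0}\bigr)^{d-1}\gamma\bigl(\{1/z_0\}\bigr)
$$
for every $z_0 \in (0,\infty)$. Setting $z_0 = \varphi(t_0) \in (0,\infty)$ (which is guaranteed by the standing hypothesis on $t_0$ in both the strict and the non-strict case), the factor $(1/\varphi(t_0))^{d-1}$ is strictly positive, so a discontinuity of $D^-\psi^{(d-2)}$ at $\varphi(t_0)$ occurs if, and only if $\gamma$ has an atom at $1/\varphi(t_0)$. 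As an even shorter alternative, one could directly combine equation (\ref{eq:lvl_set_mass_derivative_t}), which expresses $\mu_C(L_{t_0})$ as a strictly positive constant times the jump of $D^-\psi^{(d-2)}$ at $\varphi(t_0)$, with the already-established (i)$\Leftrightarrow$(iv).

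For (iv)$\Leftrightarrow$(iii) I would invoke disintegration (\ref{eq:DI}):
$$
\mu_C(L_{t_0}) = \int_{\mathbb{I}^{d-1}} K_C\bigl(\mathbf{x}, (L_{t_0})_{\mathbf{x}}\bigr)\,\mathrm{d}\mu_{C^{1:d-1}}(\mathbf{x}).
$$
For $t_0 \in (0,1)$ the $\mathbf{x}$-section $(L_{t_0})_{\mathbf{x}}$ is either empty (if $\mathbf{x}\notin [C^{1:d-1}]_{t_0}$) or the singleton $\{f^{t_0}(\mathbf{x})\}$, so positivity of the left-hand side is equivalent to positivity of $K_C(\mathbf{x},\{f^{t_0}(\mathbf{x})\})$ on a $\mu_{C^{1:d-1}}$-positive set, with the natural choice $\Lambda := \{\mathbf{x} : K_C(\mathbf{x},\{f^{t_0}(\mathbf{x})\})>0\}$.

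The only mildly delicate point I expect is the non-strict case with $t_0=0$: there $(L_0)_{\mathbf{x}} = [0,f^0(\mathbf{x})]$ is a full interval rather than a single point. However, equation (\ref{eq:markov_kernel_arch}) forces $K_C(\mathbf{x},[0,y)) = 0$ for every $y\leq f^0(\mathbf{x})$ whenever $\mathbf{x}\notin L_0^{1:d-1}$, so $K_C(\mathbf{x},(L_0)_{\mathbf{x}}) = K_C(\mathbf{x},\{f^0(\mathbf{x})\})$ outside the $\mu_{C^{1:d-1}}$-null set $L_0^{1:d-1}$, and the same disintegration argument closes the loop. This minor strict/non-strict bookkeeping is the main (and only mild) obstacle; everything else is a direct unwinding of formulas already established in Section~\ref{section:notation}.
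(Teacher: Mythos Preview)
Your proposal is correct and follows essentially the same route as the paper. The paper's proof is extremely terse: it obtains (i)$\Leftrightarrow$(ii)$\Leftrightarrow$(iv) from Theorem~\ref{thm:lvl_sets_kendall_williamson} together with equations~\eqref{eq:lvl_set_mass_derivative_t}--\eqref{eq:lvl_set_mass_derivative_0}, then derives (ii)$\Rightarrow$(iii) from the explicit kernel formula~\eqref{eq:markov_kernel_arch} and (iii)$\Rightarrow$(iv) by disintegration; your organization differs only in that you close (iv)$\Leftrightarrow$(iii) symmetrically via disintegration alone, and you spell out the non-strict $t_0=0$ bookkeeping that the paper leaves implicit.
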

\begin{proof}
The equivalence of the first, second and fourth assertion is an immediate consequence of Theorem \ref{thm:lvl_sets_kendall_williamson} and equations \eqref{eq:lvl_set_mass_derivative_t} and \eqref{eq:lvl_set_mass_derivative_0}, respectively. The fact that the second assertion implies the third one 
is an immediate consequence of the form of the Markov kernel in equation \eqref{eq:markov_kernel_arch}. Applying disintegration proves that the third assertion implies the fourth one, which completes the proof.
\end{proof}
\noindent The previous lemma has the following direct corollary.
\begin{Cor}\label{cor:point.mass}
Let $C \in \mathcal{C}_{ar}^d$ and $\gamma \in \mathcal{P}_{\mathcal{W}_d}$ be the corresponding Williamson measure.
Then $\gamma$ has a point mass if, and only if $\mu_C^{dis}(\mathbb{I}^d) > 0$.
\end{Cor}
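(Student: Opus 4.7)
The plan is to reduce the corollary directly to Lemma \ref{lem:equivalence_point_mass}. The forward direction is almost immediate: if $\gamma(\{t^*\}) > 0$ for some $t^* \in (0,\infty)$, set $t_0 := \psi(1/t^*)$ so that $1/\varphi(t_0) = t^*$; then $t_0 \in [0,1)$, and moreover $t_0 \in (0,1)$ in the strict case (where $\varphi(0) = \infty$). The implication (i) $\Rightarrow$ (iii) of Lemma \ref{lem:equivalence_point_mass} yields a Borel set $\Lambda \subseteq \mathbb{I}^{d-1}$ with $\mu_{C^{1:d-1}}(\Lambda) > 0$ on which $K_C(\mathbf{x}, \{f^{t_0}(\mathbf{x})\}) > 0$. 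Since the Lebesgue decomposition \eqref{eq:lebesgue_decomp_markov} absorbs every atom of $K_C(\mathbf{x},\cdot)$ into its discrete component, we get $K_C^{dis}(\mathbf{x}, \mathbb{I}) > 0$ on $\Lambda$, and disintegration via \eqref{eq:def_abs_dis_sing_copula} gives
\begin{equation*}
\mu_C^{dis}(\mathbb{I}^d) \;\geq\; \int_\Lambda K_C^{dis}(\mathbf{x}, \mathbb{I}) \, \mathrm{d}\mu_{C^{1:d-1}}(\mathbf{x}) \;>\; 0.
\end{equation*}

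For the converse, assume $\mu_C^{dis}(\mathbb{I}^d) > 0$, so that $\Lambda' := \{\mathbf{x} \in \mathbb{I}^{d-1} : K_C^{dis}(\mathbf{x}, \mathbb{I}) > 0\}$ satisfies $\mu_{C^{1:d-1}}(\Lambda') > 0$. The key step is to locate the atoms of $K_C(\mathbf{x}, \cdot)$ in a way that is uniform in $\mathbf{x}$. From the explicit kernel formula \eqref{eq:markov_kernel_arch}, for $\mathbf{x} \notin L_0^{1:d-1}$ with $M(\mathbf{x}) < 1$ the conditional distribution function $y \mapsto K_C(\mathbf{x}, [0,y])$ has a jump at $y_0 \geq f^0(\mathbf{x})$ if, and only if $D^{-}\psi^{(d-2)}$ is discontinuous at $\sum_{i=1}^{d-1}\varphi(x_i) + \varphi(y_0)$. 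Since $(-1)^{d-2}\psi^{(d-2)}$ is convex on $(0,\infty)$, its left derivative has at most countably many discontinuities; enumerate them as $\{z_n\}_{n \in \mathbb{N}}$ and put $t_n := \psi(z_n) \in [0,1)$. Every atom of $K_C(\mathbf{x},\cdot)$ is then of the form $f^{t_n}(\mathbf{x})$ for some $n \in \mathbb{N}$, so that
\begin{equation*}
K_C^{dis}(\mathbf{x}, \mathbb{I}) \;=\; \sum_{n \in \mathbb{N}} K_C(\mathbf{x}, \{f^{t_n}(\mathbf{x})\})
\end{equation*}
for every $\mathbf{x} \in \Lambda'$.

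A countable-additivity (pigeonhole) argument applied to the decomposition $\Lambda' = \bigcup_{n \in \mathbb{N}} \{\mathbf{x} \in \Lambda' : K_C(\mathbf{x}, \{f^{t_n}(\mathbf{x})\}) > 0\}$ now produces some $n_0 \in \mathbb{N}$ for which the (measurable) set $\Lambda_{n_0}$ inside this union has $\mu_{C^{1:d-1}}(\Lambda_{n_0}) > 0$; applying the implication (iii) $\Rightarrow$ (i) of Lemma \ref{lem:equivalence_point_mass} with this $t_{n_0}$ yields $\gamma(\{1/\varphi(t_{n_0})\}) > 0$, so $\gamma$ has a point mass. The main obstacle in the argument is precisely this identification step in the reverse direction: although the atom set of $K_C(\mathbf{x},\cdot)$ depends a priori on $\mathbf{x}$, convexity of $(-1)^{d-2}\psi^{(d-2)}$ together with the explicit kernel formula force all atoms to sit on a common countable family of level-set graphs $\{f^{t_n}\}_{n \in \mathbb{N}}$ whose indexing heights $t_n$ are independent of $\mathbf{x}$, which is what allows the pigeonhole step and thus the use of Lemma \ref{lem:equivalence_point_mass} for a fixed $t_{n_0}$.
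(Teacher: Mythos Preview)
Your proof is correct and follows the paper's intended route: the paper states the result as a direct corollary of Lemma~\ref{lem:equivalence_point_mass} without further argument, and your write-up is a faithful elaboration of that reduction. One small simplification for the converse: rather than invoking (iii)$\Rightarrow$(i) and running the pigeonhole over the countable family $\{t_n\}$, you can pick any single $\mathbf{x}\in\Lambda'$ (outside $L_0^{1:d-1}$, $M(\mathbf{x})<1$) and any atom $y_0$ of $K_C(\mathbf{x},\cdot)$, read off from \eqref{eq:markov_kernel_arch} that $D^-\psi^{(d-2)}$ is discontinuous at $z_0=\sum_i\varphi(x_i)+\varphi(y_0)$, and apply (ii)$\Rightarrow$(i) of Lemma~\ref{lem:equivalence_point_mass} with $t_0=\psi(z_0)$ directly; this avoids the need for a positive-measure set and the countable decomposition.
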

As next step we tackle the one-sided versions of the previous corollary 
for the absolutely continuous and the singular components of the Williamson measure.
\begin{Lemma}\label{lem:absolutely_cont}
    Let $C \in \mathcal{C}_{ar}^d$ and $\gamma \in \mathcal{P}_{\mathcal{W}_d}$ be the corresponding Williamson measure. Then $\gamma^{abs}([0,\infty)) > 0$ implies $\mu_C^{abs}(\mathbb{I}^d) >0$. 
\end{Lemma}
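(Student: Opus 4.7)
The approach is to exploit the pointwise decomposition
\begin{equation*}
  K_C(\mathbf{x},[0,y]) = H^{abs}_{\mathbf{x}}(y) + H^{dis}_{\mathbf{x}}(y) + H^{sing}_{\mathbf{x}}(y)
\end{equation*}
from equation \eqref{eq:kernel_function_G_abs_dis_sing} by showing that, for $\mu_{C^{1:d-1}}$-a.e. $\mathbf{x}$, the sub-distribution function $y \mapsto H^{abs}_{\mathbf{x}}(y)$ induces an absolutely continuous sub-measure $\nu^{abs}_{\mathbf{x}}$ on $\mathbb{I}$ which is dominated by $K_C(\mathbf{x},\cdot)$. The maximality of the absolutely continuous part in the Lebesgue decomposition then forces $K_C^{abs}(\mathbf{x},\mathbb{I}) \geq \nu^{abs}_{\mathbf{x}}(\mathbb{I}) = H^{abs}_{\mathbf{x}}(1)$, and integrating this pointwise inequality against $\mu_{C^{1:d-1}}$ via equation \eqref{eq:def_abs_dis_sing_copula} reduces the claim to
\begin{equation*}
  \int_{\mathbb{I}^{d-1}} H^{abs}_{\mathbf{x}}(1)\,\mathrm{d}\mu_{C^{1:d-1}}(\mathbf{x}) > 0.
\end{equation*}

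For the absolute-continuity step, let $h$ denote the Lebesgue density of $\gamma^{abs}$, fix $\mathbf{x}$ satisfying $M(\mathbf{x}) < 1$ and $\mathbf{x} \notin L_0^{1:d-1}$ (a set of full $\mu_{C^{1:d-1}}$-measure), and set $s(\mathbf{x}) := \sum_{i=1}^{d-1}\varphi(x_i) \in (0,\varphi(0))$ and $u(\mathbf{x},y) := 1/(s(\mathbf{x}) + \varphi(y))$, so that
\begin{equation*}
  H^{abs}_{\mathbf{x}}(y) = \frac{1}{Z(\mathbf{x})}\int_0^{u(\mathbf{x},y)} t^{d-1} h(t)\,\mathrm{d}t
\end{equation*}
with $Z(\mathbf{x}) := \int_{I_1} t^{d-1}\,\mathrm{d}\gamma(t) \in (0,\infty)$ by Lemma \ref{lem:function_G}. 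The convexity of $\varphi$ on $(0,1]$ together with $s(\mathbf{x}) > 0$ imply that $y \mapsto u(\mathbf{x},y)$ is non-decreasing and absolutely continuous on $[0,1]$; the substitution $z = \varphi(y)$ gives $\int_0^1 u'(\mathbf{x},y)\,\mathrm{d}y = 1/s(\mathbf{x}) - 1/(s(\mathbf{x})+\varphi(0)) = u(\mathbf{x},1) - u(\mathbf{x},0)$, which by the standard characterization of absolute continuity for continuous monotone functions confirms this. Composing the monotone absolutely continuous function $u(\mathbf{x},\cdot)$ with the absolutely continuous function $r \mapsto \int_0^r t^{d-1}h(t)\,\mathrm{d}t$ yields that $y \mapsto H^{abs}_{\mathbf{x}}(y)$ is absolutely continuous on $[0,1]$, which retrospectively justifies the notation.

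For the positivity step, since $\gamma^{abs}((0,\infty)) > 0$, continuity from below supplies some $R > 0$ with $\gamma^{abs}((0,R]) > 0$ and hence some $a \in (0,R)$ with $\gamma^{abs}([a,R]) > 0$. For every $\mathbf{x}$ in the set
\begin{equation*}
  S_R := \{\mathbf{x} \in \mathbb{I}^{d-1} : s(\mathbf{x}) < 1/R\} = \{\mathbf{x} \in \mathbb{I}^{d-1} : C^{1:d-1}(\mathbf{x}) > \psi(1/R)\}
\end{equation*}
one has $I_1 \supseteq (0,R]$, yielding $Z(\mathbf{x})\,H^{abs}_{\mathbf{x}}(1) \geq a^{d-1}\gamma^{abs}([a,R]) > 0$. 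The main obstacle is to verify $\mu_{C^{1:d-1}}(S_R) > 0$: since $C^{1:d-1}$ is absolutely continuous with density $c^{1:d-1}(\mathbf{x}) = \prod_{i=1}^{d-1}\varphi'(x_i)\cdot D^{-}\psi^{(d-2)}(s(\mathbf{x}))$, and Lemma \ref{lem:function_G} yields $-D^{-}\psi^{(d-2)}(0^{+}) = (d-1)!\int_{(0,\infty)} t^{d-1}\,\mathrm{d}\gamma(t) > 0$ (recall $\gamma(\{0\})=0$ and $\gamma$ is a probability measure, so $\gamma((0,\infty)) = 1$), the density $c^{1:d-1}$ is strictly positive on a non-empty open subset of $S_R \cap (0,1)^{d-1}$ near $(1,\ldots,1)$, hence $\mu_{C^{1:d-1}}(S_R) > 0$. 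Combining this with the pointwise lower bound on $S_R$ gives $\int H^{abs}_{\mathbf{x}}(1)\,\mathrm{d}\mu_{C^{1:d-1}}(\mathbf{x}) > 0$, completing the argument.
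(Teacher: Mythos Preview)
Your proof is correct and follows essentially the same approach as the paper: both establish absolute continuity of $H^{abs}_{\mathbf{x}}$ via composition of monotone absolutely continuous functions and then exhibit a set of positive $\mu_{C^{1:d-1}}$-measure (your $S_R$ plays the role of the paper's $\Lambda = \{\mathbf{x}: 0 < s(\mathbf{x}) < 1/z_0\}$ with $z_0 := \sup\{z : \gamma^{abs}([0,z]) = 0\}$) on which $H^{abs}_{\mathbf{x}}(1) > 0$, the only extra ingredient being your explicit maximality step $K_C^{abs}(\mathbf{x},\mathbb{I}) \geq H^{abs}_{\mathbf{x}}(1)$, which the paper leaves implicit. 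One cosmetic slip: the sign in ``$-D^{-}\psi^{(d-2)}(0^{+})$'' should be $(-1)^{d-1}$ according to Lemma~\ref{lem:function_G}, but this does not affect the argument.
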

\begin{proof}
We assume that $\gamma^{abs}([0,\infty)) > 0$ and prove that for all $\mathbf{x}$ in a set of 
positive  $\mu_{C^{1:d-1}}$-measure the Markov kernel $K_C(\mathbf{x},\cdot)$ has non-degenerated absolutely continuous component. 
Setting $z_0 := \sup\{z \in [0,\infty) \colon \gamma^{abs}([0,z]) = 0\}$ we have $z_0 \in [0,\infty)$. 
Define the set $\Lambda \subseteq \mathbb{I}^{d-1}$ by 
$$
\Lambda := \left\{\mathbf{x} \in [0,1)^{d-1}\colon 0 < \sum_{i = 1}^{d-1}\varphi(x_i) < \frac{1}{z_0}\right\}.
$$
Then we obviously have $\lambda_{d-1}(\Lambda) > 0$. 
Using the equivalence of $\gamma([0,r))=0$ and $\psi(\frac{1}{r})=0$ for $r \in (0,\infty)$, it 
follows that 
$\frac{1}{z_0} \leq \varphi(0)$ holds. Hence for $\mathbf{x} \in \Lambda$ the density $c^{1:d-1}$
of $C^{1:d-1}$ fulfills 
$c^{1:d-1}(\mathbf{x})>0$, which altogether implies that $\mu_{C^{1:d-1}}(\Lambda) > 0$. 
Therefore, using equation (\ref{eq:kernel_function_G}) it 
 suffices to show that for fixed $\mathbf{x} \in \Lambda$ the function
\begin{equation}\label{eq:def_abs_dis_sing_sub_ker}
y \mapsto H^{abs}_\mathbf{x}(y) = 
\frac{\int_{I_y}t^{d-1}\mathrm{d}\gamma^{abs}(t)}{\int_{I_1}t^{d-1}\mathrm{d}\gamma(t)}
\end{equation}
is non-degenerated and absolutely continuous on $\mathbb{I}$, which can be done as follows:
For $\mathbf{x} \in \Lambda$ the construction of $z_0$ implies that for $y_0=f^0(\mathbf{x})$ we have 
$$
\frac{1}{\sum_{i = 1}^{d-1}\varphi(x_i) + \varphi(y_0)} \leq \frac{1}{\varphi(0)} \leq  z_0 < \frac{1}{\sum_{i = 1}^{d-1}\varphi(x_i)} = \frac{1}{\sum_{i = 1}^{d-1}\varphi(x_i) + \varphi(1)},
$$
which, using the fact that $t^{d-1}>0$ for every $t \in (0,\infty)$, yields that $H^{abs}_\mathbf{x}$ is non-degenerated;  
$H^{abs}_\mathbf{x}$ is obviously non-negative, non-decreasing, and continuous on $\mathbb{I}$. \\
Moreover the function $z \mapsto \int_{[0,z]} t^{d-1} \mathrm{d}\gamma^{abs}(t)$ is absolutely continuous 
and non-decreasing on every compact interval of the form $[0,a]$. 
Considering that both, in the strict and in the non-strict case, we have that the mapping  
$y \mapsto \frac{1}{\sum_{i = 1}^{d-1}\varphi(x_i) + \varphi(y)}$ is absolutely continuous and non-decreasing 
on $\mathbb{I}$, according to \cite[Proposition 129]{pap2002} the composition $H^{abs}_\mathbf{x}$ 
is absolutely continuous 
too. Finally, equation \eqref{eq:kernel_function_G_abs_dis_sing} implies that $K_C(\mathbf{x},\cdot)$ has a non-degenerated absolutely continuous component, which completes the proof.
\end{proof}
As next step we prove the analogous assertion for the singular component of the measure $\gamma \in \mathcal{P}_{\mathcal{W}_d}$. 
\begin{Lemma}\label{lem:sing}
    Let $C \in \mathcal{C}_{ar}^d$ and $\gamma \in \mathcal{P}_{\mathcal{W}_d}$ be the corresponding Williamson measure. Then $\gamma^{sing}([0,\infty)) > 0$ implies $\mu_C^{sing}(\mathbb{I}^d) >0.$ 
\end{Lemma}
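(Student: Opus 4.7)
The plan is to mirror the structure of the proof of Lemma \ref{lem:absolutely_cont}, working with the third summand $H^{sing}_{\mathbf{x}}$ in the decomposition \eqref{eq:kernel_function_G_abs_dis_sing} instead of $H^{abs}_{\mathbf{x}}$ and showing that $K_C(\mathbf{x},\cdot)$ has non-degenerated singular component on a set $\Lambda$ of positive $\mu_{C^{1:d-1}}$-measure. Specifically, I would set $z_0 := \sup\{z \in [0,\infty) : \gamma^{sing}([0,z]) = 0\}$, which is finite since $\gamma^{sing}([0,\infty)) > 0$, and consider the same set $\Lambda := \{\mathbf{x} \in [0,1)^{d-1} : 0 < \sum_{i=1}^{d-1}\varphi(x_i) < \tfrac{1}{z_0}\}$ as in the proof of Lemma \ref{lem:absolutely_cont}. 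Exactly as before, $\lambda_{d-1}(\Lambda) > 0$, $z_0 \leq 1/\varphi(0)$ and the strictly positive density $c^{1:d-1}$ on $\Lambda$ gives $\mu_{C^{1:d-1}}(\Lambda) > 0$; moreover the choice of $z_0$ ensures that for $\mathbf{x} \in \Lambda$ the interval $I_1 \setminus I_{f^0(\mathbf{x})}$ carries positive $\gamma^{sing}$-mass, so $H^{sing}_{\mathbf{x}}$ is non-degenerated.

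It then remains to verify that $H^{sing}_{\mathbf{x}}$, as a function of $y \in \mathbb{I}$, is indeed (the distribution function of) a singular sub-probability measure in the sense defined in Section \ref{section:notation}, i.e.\ it is continuous and its induced measure is concentrated on a $\lambda$-null set. Continuity is immediate: $\gamma^{sing}$ has no atoms by definition, and $y \mapsto h(y) := 1/(\sum_{i=1}^{d-1}\varphi(x_i) + \varphi(y))$ is continuous on $[f^0(\mathbf{x}),1]$, so $y \mapsto \int_{I_y} t^{d-1} \mathrm{d}\gamma^{sing}(t)$ is continuous.

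The key step is concentration on a Lebesgue null set. Fix $\mathbf{x} \in \Lambda$ and let $N_0 \in \mathcal{B}([0,\infty))$ satisfy $\lambda(N_0) = 0$ and $\gamma^{sing}(N_0) = \gamma^{sing}([0,\infty))$. By a change-of-variables argument for the strictly increasing function $h$, the Lebesgue-Stieltjes measure associated with $H^{sing}_{\mathbf{x}}$ on $[f^0(\mathbf{x}),1]$ is concentrated on $h^{-1}(N_0)$. Since $\varphi$ is convex and strictly decreasing on $(0,1]$, it is locally absolutely continuous with $\varphi'(y) < 0$ for $\lambda$-almost every $y$ (else $\varphi$ would be constant on an interval), so $h$ is absolutely continuous on $[f^0(\mathbf{x}),1]$ and satisfies $h'(y) > 0$ a.e. A standard fact (e.g.\ \cite[Proposition 129]{pap2002} combined with the identity $\lambda(h(E)) = \int_E h'$) then yields that $h$ has Luzin's property $(N^{-1})$, i.e.\ $\lambda(h^{-1}(N_0)) = 0$. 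Hence $H^{sing}_{\mathbf{x}}$ induces a non-degenerated singular sub-probability on $\mathbb{I}$.

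Combining the three steps, $K_C(\mathbf{x},\cdot)$ has a non-degenerated singular component for every $\mathbf{x} \in \Lambda$, so by the disintegration identity \eqref{eq:def_abs_dis_sing_copula},
\[
\mu_C^{sing}(\mathbb{I}^d) \;\geq\; \int_\Lambda K_C^{sing}(\mathbf{x},\mathbb{I}) \, \mathrm{d}\mu_{C^{1:d-1}}(\mathbf{x}) \;>\; 0.
\]
I expect the chief obstacle to be the change-of-variables argument establishing $\lambda(h^{-1}(N_0)) = 0$: unlike in the absolutely continuous case, where absolute continuity of $H^{abs}_{\mathbf{x}}$ is inherited through composition via \cite[Proposition 129]{pap2002}, preservation of singularity requires the inverse direction (property $(N^{-1})$) and hence an a.e.\ lower bound on $h'$, which is where the strict monotonicity of $\varphi$ genuinely enters.
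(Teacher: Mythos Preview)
Your proof is correct and follows the paper's overall strategy exactly: the same set $\Lambda$ (with $\gamma^{sing}$ in place of $\gamma^{abs}$ in the definition of $z_0$), the same summand $H^{sing}_{\mathbf{x}}$ from \eqref{eq:kernel_function_G_abs_dis_sing}, and the same disintegration conclusion. The one place you diverge is in the argument that $H^{sing}_{\mathbf{x}}$ is singular. The paper passes to the \emph{inverse} transformation $z \mapsto \psi\bigl(\tfrac{1}{z} - \sum_i\varphi(x_i)\bigr)$, observes that it is locally Lipschitz by convexity of $\psi$ (hence has Luzin's property $N$), and then applies the chain rule to show $(H^{sing}_{\mathbf{x}})'(y) = 0$ for $\lambda$-a.e.\ $y$. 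You instead stay with the forward map $h$ and exploit convexity and strict monotonicity of $\varphi$ to obtain $h' > 0$ a.e., from which $\lambda(h^{-1}(N_0)) = 0$ follows via the area formula $\lambda(h(E)) = \int_E h'$. These two arguments are dual and of equal strength; yours avoids the explicit chain-rule computation at the price of the slightly more delicate $(N^{-1})$ step you correctly flagged. One small slip: the inequality needed for $\Lambda \cap L_0^{1:d-1} = \emptyset$ is $\tfrac{1}{z_0} \leq \varphi(0)$, i.e.\ $z_0 \geq \tfrac{1}{\varphi(0)}$, not the reverse.
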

\begin{proof}
We assume $\gamma^{sing}([0,\infty)) > 0$ and proceed in the same manner as for the absolutely continuous 
component. Define the set $\Lambda$ in the same manner as in the proof of Lemma \ref{lem:absolutely_cont} 
only replacing $\gamma^{abs}$ by $\gamma^{sing}$ in the definition of $z_0$.
It suffices to show that for fixed $\mathbf{x} \in \Lambda$ the function
\begin{equation}
y \mapsto H^{sing}_\mathbf{x}(y) = \frac{\int_{I_y}t^{d-1}\mathrm{d}\gamma^{sing}(t)}{\int_{I_1}t^{d-1}\mathrm{d}\gamma(t)}
\end{equation}
is non-degenerated and singular on $\mathbb{I}$, which according to equation \eqref{eq:kernel_function_G_abs_dis_sing} implies that the Markov kernel $K_C(\mathbf{x},\cdot)$ has 
a non-degenerated singular component. \\
The construction of $\Lambda$ implies that $H^{sing}_\mathbf{x}$ is non-degenerated. 
Moreover $H^{sing}_\mathbf{x}$ is 
continuous since discontinuity points would correspond to point masses of $\gamma^{sing}$, of which 
by assumption there are none. \\
In order to prove singularity of $H^{sing}_\mathbf{x}$ let $\mathbf{x} \in \Lambda$ be arbitrary but fixed and 
proceed as follows:  
Define the $\sigma$-finite measure $m$ on $\mathcal{B}([0,\infty))$ by
$$
m(E) := \int_E t^{d-1} \mathrm{d}\gamma^{sing}(t). 
$$
Then there exists some set $O \in \mathcal{B}([0,\infty))$ with $m([0,\infty) \setminus O) = 0$ and 
$\lambda(O) = 0$. Letting $G_m \colon I_1 \rightarrow [0,\infty)$ denote the induced measure-generating function
(restricted to the set $I_1$), defined by $G_m(z) := m([0,z])$, 
singularity of $m$ implies that $G_m'(z) = 0$ for $\lambda$-almost every $z \in I_1$, i.e., setting
$$
F := \{z \in I_1 \colon G_m' \text{ exists and } G_m'(z) = 0\}
$$
we have $\lambda(F) = \lambda(I_1)$. 
The function $g: \mathbb{I} \rightarrow \left[\frac{1}{\sum_{k=1}^{d-1}\varphi(x_k) + \varphi(0)},\frac{1}{\sum_{k=1}^{d-1}\varphi(x_k)}\right] =: I_{0,1}$, defined by 
$g(y) := \frac{1}{\sum_{i=1}^{d-1}\varphi(x_i) + \varphi(y)}$ obviously is
strictly increasing, continuous and bijective. Let $h$ denote its inverse and set
$$
\Upsilon := \left\{y \in \mathbb{I} \colon g(y) \in F\right\}=g^{-1}(F) \in \mathcal{B}(\mathbb{I}).
$$
Then for every $z \in I_{0,1}$ we have $h(z) = \psi(\frac{1}{z}-\sum_{i = 1}^{d-1}\varphi(x_i))$ 
as well as $h(F) = \Upsilon$.
Using convexity of $\psi$ yields that $h$ is locally Lipschitz continuous and therefore locally 
absolutely continuous. Applying that locally absolutely continuous functions map 
sets of $\lambda$-measure $0$ to sets of $\lambda$-measure $0$  \cite[Theorem 3.41]{leoni2024} 
implies that $\lambda(\mathbb{I} \setminus \Upsilon) = 0$ holds.
For fixed $y \in \Upsilon \cap (0,1)$, using equation \eqref{eq:def_abs_dis_sing_sub_ker} and applying the chain rule 
yields  
\begin{align*}
(H^{sing})'(y) &= \frac{1}{c}\frac{\mathrm{d}}{\mathrm{d}y}G_m\bigg(\frac{1}{\sum_{i=1}^{d-1}\varphi(x_i) + \varphi(y)}\bigg) \\&= \frac{1}{c}G_m'\bigg(\frac{1}{\sum_{i=1}^{d-1}\varphi(x_i) + \varphi(y)}\bigg)\frac{-D^+\varphi(y)}{(\sum_{i=1}^{d-1}\varphi(x_i) + \varphi(y))^2} = 0,
\end{align*}
whereby $c = \int_{I_1}t^{d-1}\mathrm{d}\gamma(t)$.
Since $\lambda(\Upsilon \cap (0,1))=0$ we have shown that $H^{sing}_\mathbf{x}$ has derivative zero 
$\lambda$-almost everywhere in $\mathbb{I}$, which completes the proof. 
\end{proof}
\begin{Rem}\label{rem:discrete}
\emph{Notice that in the proofs of Lemma \ref{lem:absolutely_cont} and Lemma \ref{lem:sing}
the construction via $z_0$ and $\Lambda$
was key for showing that $H^{abs}_\mathbf{x}$ and $H^{sing}_\mathbf{x}$ are non-degenerated, but not for showing absolute 
continuity and singularity. The proofs showed that for $\mathbf{x} \in \Lambda$ the 
functions $H^{abs}_\mathbf{x}$ and $H^{sing}_\mathbf{x}$ are purely absolutely continuous and singular, respectively. 
For the discrete case Corollary \ref{cor:point.mass} states that 
$\gamma^{dis}([0,\infty)) > 0$ implies $\mu_C^{abs}(\mathbb{I}^d) >0$ but it was not 
explicitly shown that $H_\mathbf{x}^{dis}$ is purely discrete. This observation, however, can be shown analogously to the 
absolutely continuous and singular case. In fact, defining 
the set $\Lambda$ in the same manner as in the proof of Lemma \ref{lem:absolutely_cont} 
only replacing $\gamma^{abs}$ by $\gamma^{dis}$ in the definition of $z_0$ we again have
$\mu_{C^{1:d-1}}(\Lambda) > 0$. Moreover, for $\mathbf{x} \in \Lambda$ considering the definition 
of $H^{dis}_\mathbf{x}$ it is straightforward to show that for $\gamma^{dis} = \sum_{j = 1}^\infty\alpha_j\delta_{z_j}$ 
the function $H^{dis}_\mathbf{x}$ corresponds to the discrete measure with point mass 
$\frac{\alpha_j\,z_j^{d-1}}{\int_{I_1} t^{d-1} \mathrm{d}\gamma}$ in  
$\psi(\frac{1}{z_j}- \sum_{i = 1}^{d-1}\varphi(x_i))$ for every $j \in \mathbb{N}$.
}
\end{Rem}
Combining the afore-mentioned lemmas and corollaries we obtain the main result of this section stating that 
the absolutely continuous/discrete/singular component of $\gamma$ is non-degenerated if, and only if the absolutely continuous/discrete/singular component of the corresponding Archimedean copula $C$ is.
Theorem \ref{thm:main_regularity_result} will also be crucial for proving Theorem 
\ref{thm:typical_archimedean} and Corollary \ref{cor:typical_arch_copula}, 
the main result of Section \ref{section:category} on topologically typical Archimedean copulas.
\begin{theorem}\label{thm:main_regularity_result}
    Let $C \in \mathcal{C}_{ar}^d$ and $\gamma \in \mathcal{P}_{\mathcal{W}_d}$ be its corresponding Williamson measure. Then the following three equivalences hold:
    \begin{description}
        \item[(i)] $\gamma^{dis}([0,\infty)) > 0$ if, and only if $\mu_C^{dis}(\mathbb{I}^d) > 0$.
        \item[(ii)] $\gamma^{abs}([0,\infty)) > 0$ if, and only if $\mu_C^{abs}(\mathbb{I}^d) > 0$.
        \item[(iii)] $\gamma^{sing}([0,\infty)) > 0$ if, and only if $\mu_C^{sing}(\mathbb{I}^d) > 0$.
    \end{description}
\end{theorem}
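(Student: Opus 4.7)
The plan is to observe that statement (i) is immediate from Corollary \ref{cor:point.mass}, which already gives both directions of the discrete equivalence. For (ii) and (iii), the forward implications ``$\gamma^{abs}([0,\infty))>0\Rightarrow \mu_C^{abs}(\mathbb{I}^d)>0$'' and ``$\gamma^{sing}([0,\infty))>0\Rightarrow \mu_C^{sing}(\mathbb{I}^d)>0$'' are exactly Lemma \ref{lem:absolutely_cont} and Lemma \ref{lem:sing}. Only the two reverse implications require new argument, and both will be handled in one stroke via uniqueness of the Lebesgue decomposition of the Markov kernel.

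The key observation is that equation \eqref{eq:kernel_function_G_abs_dis_sing} furnishes, for every $\mathbf{x}\notin L_0^{1:d-1}$ with $M(\mathbf{x})<1$ (a $\mu_{C^{1:d-1}}$-conull set, see the discussion following equation \eqref{eq:markov_kernel_arch}), the pointwise decomposition
\begin{equation*}
K_C(\mathbf{x},[0,y]) \;=\; H^{abs}_{\mathbf{x}}(y) + H^{dis}_{\mathbf{x}}(y) + H^{sing}_{\mathbf{x}}(y).
\end{equation*}
I claim each summand is of the advertised type for \emph{every} such $\mathbf{x}$, not only for $\mathbf{x}$ in the auxiliary sets $\Lambda$ used in the forward proofs. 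Indeed, if $\sum_{i}\varphi(x_i)$ falls outside the range where the corresponding component of $\gamma$ lives, the corresponding $H^{\cdot}_{\mathbf{x}}$ is simply identically zero (since $I_1$ meets a $\gamma^{abs}$- or $\gamma^{sing}$-null set), which trivially qualifies as absolutely continuous or singular; otherwise, the proofs of Lemma \ref{lem:absolutely_cont} and Lemma \ref{lem:sing} and the computation in Remark \ref{rem:discrete} give absolute continuity, purity of discreteness, and singularity respectively. Hence the above display is literally the Lebesgue decomposition of the probability measure $K_C(\mathbf{x},\cdot)$, and by its uniqueness we obtain
\begin{equation*}
K_C^{abs}(\mathbf{x},\cdot) \leftrightarrow H^{abs}_{\mathbf{x}}, \qquad K_C^{dis}(\mathbf{x},\cdot) \leftrightarrow H^{dis}_{\mathbf{x}}, \qquad K_C^{sing}(\mathbf{x},\cdot) \leftrightarrow H^{sing}_{\mathbf{x}}
\end{equation*}
for $\mu_{C^{1:d-1}}$-almost every $\mathbf{x}\in\mathbb{I}^{d-1}$.

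The reverse implications then follow immediately. If $\gamma^{abs}([0,\infty))=0$, the integrand in the numerator of $H^{abs}_{\mathbf{x}}$ is the zero measure, so $K_C^{abs}(\mathbf{x},\cdot)\equiv 0$ for $\mu_{C^{1:d-1}}$-a.e.\ $\mathbf{x}$; the disintegration formula \eqref{eq:def_abs_dis_sing_copula} then forces $\mu_C^{abs}(\mathbb{I}^d)=0$. The same argument applied to $\gamma^{sing}$ gives $\mu_C^{sing}(\mathbb{I}^d)=0$, completing (ii) and (iii).

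The only delicate point, and thus the main obstacle, is the identification of the pointwise splitting in \eqref{eq:kernel_function_G_abs_dis_sing} with the Lebesgue decomposition of $K_C(\mathbf{x},\cdot)$; this requires verifying the regularity of all three summands for every relevant $\mathbf{x}$ (not just those in the auxiliary sets used to establish non-degeneracy). Once this identification is secured via the uniqueness of Lebesgue decomposition, the rest is a routine application of disintegration.
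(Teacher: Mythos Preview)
Your proposal is correct and follows essentially the same route as the paper's proof: equivalence (i) is deferred to Corollary~\ref{cor:point.mass}, the forward directions of (ii) and (iii) to Lemmas~\ref{lem:absolutely_cont} and~\ref{lem:sing}, and the reverse implications are handled by observing that the splitting~\eqref{eq:kernel_function_G_abs_dis_sing} gives the Lebesgue decomposition of $K_C(\mathbf{x},\cdot)$ for $\mu_{C^{1:d-1}}$-almost every $\mathbf{x}$, so that vanishing of a component of $\gamma$ forces the corresponding kernel component to vanish and disintegration finishes. The only cosmetic difference is that the paper phrases the reverse implications as ``$\gamma^{abs}$ degenerated $\Rightarrow K_C=H^{dis}_\mathbf{x}+H^{sing}_\mathbf{x}$ with each summand of the right type'' without naming uniqueness of the Lebesgue decomposition explicitly, whereas you invoke it directly; both rely on the observation (made in Remark~\ref{rem:discrete}) that the regularity of each $H^{\bullet}_\mathbf{x}$ holds for all relevant $\mathbf{x}$, not just those in the auxiliary set $\Lambda$.
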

\begin{proof}
Equivalence (i) has already been stated in Corollary \ref{cor:point.mass}. Moreover, for the assertions 
(ii) and (iii) one implication has already been proved. \\
Assume now that $\gamma^{abs}$ is degenerated. Then $\gamma = \gamma^{dis} + \gamma^{sing}$ and at least one of the 
two components is non-degenerated. Considering $\mathbf{x} \in \mathbb{I}^{d-1} \setminus L_0^{1:d-1}$
with $M(\mathbf{x})<1$, proceeding as in the (last parts of the) proofs of Lemma \ref{lem:absolutely_cont} 
and Lemma \ref{lem:sing}, 
and considering equation (\ref{eq:kernel_function_G_abs_dis_sing}) once more yields
$$
K_C(\mathbf{x},[0,y]) = H^{dis}_\mathbf{x}(y) + H^{sing}_\mathbf{x}(y), \quad y \in \mathbb{I}, 
$$
with $H^{dis}_\mathbf{x}$ being discrete and $H^{sing}_\mathbf{x}$ being singular. 
In other words, $K_C(\mathbf{x},[0,y])$ has degenerated absolutely continuous component.
The fact that $\mu_C^{abs}(\mathbb{I}^d) = 0$ now follows immediately via disintegration. \\
The remaining implication in assertion (iii) can be proved in the same manner.
\end{proof}
Theorem \ref{thm:main_regularity_result} allows to extend \cite[Theorem 5.12]{mult_arch} to equivalences - 
the following result holds:
\begin{Cor}\label{cor:regularity_arch}
    Let $C \in \mathcal{C}_{ar}^d$ and $\gamma \in \mathcal{P}_{\mathcal{W}_d}$ be the 
    corresponding Williamson measure. Then the following equivalences hold:
    \begin{description}
        \item[(i)] $\gamma$ is absolutely continuous if, and only if $\mu_C^{abs}(\mathbb{I}^d) = 1$.
        \item[(ii)] $\gamma$ is discrete if, and only if $\mu_C^{dis}(\mathbb{I}^d) = 1$.
        \item[(iii)] $\gamma$ is singular if, and only if $\mu_C^{sing}(\mathbb{I}^d) = 1$.
   \end{description}
\end{Cor}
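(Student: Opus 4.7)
The plan is to deduce this corollary directly from Theorem \ref{thm:main_regularity_result}, which is the strengthened (``non-degenerated component'') equivalence, together with the fact that both $\gamma$ and $\mu_C$ admit Lebesgue decompositions into absolutely continuous, discrete and singular pieces whose masses sum to $\gamma([0,\infty))=1$ and $\mu_C(\mathbb{I}^d)=1$, respectively. Since Theorem \ref{thm:main_regularity_result} relates positivity of corresponding components on both sides, zeroing out two of the three pieces on one side automatically forces the matching two pieces on the other side to vanish, and the remaining piece must carry all the mass.

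Concretely, for assertion (i), I would argue as follows. Assume first that $\gamma$ is absolutely continuous. Then by definition $\gamma^{dis}([0,\infty))=0$ and $\gamma^{sing}([0,\infty))=0$, so Theorem \ref{thm:main_regularity_result}(i) and (iii) force $\mu_C^{dis}(\mathbb{I}^d)=0$ and $\mu_C^{sing}(\mathbb{I}^d)=0$. Since, by disintegration and the decomposition \eqref{eq:lebesgue_decomp_markov}, the three components $\mu_C^{abs}, \mu_C^{dis}, \mu_C^{sing}$ always sum to $\mu_C(\mathbb{I}^d)=1$, we conclude $\mu_C^{abs}(\mathbb{I}^d)=1$. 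Conversely, if $\mu_C^{abs}(\mathbb{I}^d)=1$, then $\mu_C^{dis}(\mathbb{I}^d)=\mu_C^{sing}(\mathbb{I}^d)=0$, so again by Theorem \ref{thm:main_regularity_result}(i) and (iii) we get $\gamma^{dis}([0,\infty))=\gamma^{sing}([0,\infty))=0$, hence $\gamma=\gamma^{abs}$.

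Assertions (ii) and (iii) follow by the same three-line argument, cyclically permuting which of the three equivalences in Theorem \ref{thm:main_regularity_result} is used for the ``two components vanish'' step and which component is identified as carrying full mass. In each case one only needs the two complementary equivalences from Theorem \ref{thm:main_regularity_result} together with the summation constraints $\gamma^{abs}([0,\infty))+\gamma^{dis}([0,\infty))+\gamma^{sing}([0,\infty))=1$ and $\mu_C^{abs}(\mathbb{I}^d)+\mu_C^{dis}(\mathbb{I}^d)+\mu_C^{sing}(\mathbb{I}^d)=1$.

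I do not expect any real obstacle here: all the substantive work (propagation of non-degenerated components in both directions, and the fact that the Markov-kernel decomposition \eqref{eq:lebesgue_decomp_markov} combined with disintegration produces a genuine Lebesgue-type decomposition of $\mu_C$) has already been carried out in Lemmas \ref{lem:absolutely_cont}--\ref{lem:sing}, Corollary \ref{cor:point.mass} and Remark \ref{rem:discrete}. The corollary is essentially a bookkeeping consequence of Theorem \ref{thm:main_regularity_result}.
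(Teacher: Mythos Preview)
Your proposal is correct and follows essentially the same route as the paper: both arguments reduce the corollary to Theorem~\ref{thm:main_regularity_result} together with the summation constraint on the three components. The only minor difference is that the paper quotes \cite{neslehova} and \cite[Theorem 5.12]{mult_arch} for the forward implications (``$\gamma$ is $X$ $\Rightarrow$ $\mu_C$ is $X$'') and uses Theorem~\ref{thm:main_regularity_result} only for the reverse implications via contraposition, whereas you derive both directions uniformly from Theorem~\ref{thm:main_regularity_result}; your version is slightly more self-contained but otherwise identical in substance.
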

\begin{proof}
The first assertion has already been established in \cite{neslehova}. 
Furthermore, sufficiency concerning assertions (ii) and (iii) is part of \cite[Theorem 5.12]{mult_arch} and 
it remains to show necessity. 
Assume that $0 \leq \gamma^{dis}([0,\infty)) <1$ holds. Then we have 
$\gamma^{abs}([0,\infty)) > 0$ or $\gamma^{sing}([0,\infty)) > 0$, so applying Theorem \ref{thm:main_regularity_result} immediately yields that $0 \leq \mu_C^{dis}(\mathbb{I}^d)<1$. 
An analogous argument shows (iii). 
\end{proof}
Viewing Theorem \ref{thm:arch_full_support} and again considering the Lebesgue decomposition of the 
Williamson measure $\gamma$ we can show the following stronger statement.
\begin{theorem}\label{lem:full_supp_meas_full_supp_cop}
    Let $C \in \mathcal{C}_{ar}^d$ and let $\gamma \in \mathcal{P}_{\mathcal{W}_d}$ be its corresponding Williamson measure. Then the following assertions hold
    \begin{itemize}
     \item[(i)] If $\gamma^{abs}$ has full support, then $\mu_C^{abs}$ has full support.
        \item[(ii)] If $\gamma^{dis}$ has full support, then $\mu_C^{dis}$ has full support.
         \item[(iii)] If $\gamma^{sing}$ has full support, then $\mu_C^{sing}$ has full support.
    \end{itemize}
\end{theorem}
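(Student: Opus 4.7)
The plan is to handle all three assertions simultaneously by combining disintegration with the decomposition (\ref{eq:kernel_function_G_abs_dis_sing}), where, by Lemma \ref{lem:absolutely_cont}, Lemma \ref{lem:sing} and Remark \ref{rem:discrete}, the functions $H^{abs}_\mathbf{x}$, $H^{dis}_\mathbf{x}$, $H^{sing}_\mathbf{x}$ are (up to the uniqueness of the Lebesgue decomposition of $K_C(\mathbf{x},\cdot)$) the absolutely continuous, discrete and singular parts of the Markov sub-kernel $K_C(\mathbf{x},\cdot)$.

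First I would observe that, if $\gamma^{comp}$ has full support $[0,\infty)$ for some $comp \in \{abs, dis, sing\}$, then, since $\gamma \geq \gamma^{comp}$ as measures, $\gamma$ itself has full support $[0,\infty)$. By Theorem \ref{thm:arch_full_support} this implies that $\mu_C$ has full support $\mathbb{I}^d$ and, since the projection $\pi\colon\mathbb{I}^d \to \mathbb{I}^{d-1}$ onto the first $d-1$ coordinates is a continuous surjection, the pushforward $\mu_{C^{1:d-1}}=(\mu_C)^\pi$ has full support $\mathbb{I}^{d-1}$. Moreover, by \cite[Lemma 5.5]{mult_arch}, $C$ is necessarily strict, so $\varphi(0)=\infty$ and $f^0\equiv 0$.

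Since open rectangles form a base of the topology of $\mathbb{I}^d$, it is enough to show $\mu_C^{comp}(R)>0$ for every open rectangle $R=R^{1:d-1}\times(\underline{y},\overline{y})\subseteq\mathbb{I}^d$ with $\lambda_d(R)>0$. Disintegration yields
$$
\mu_C^{comp}(R)=\int_{R^{1:d-1}} K_C^{comp}\bigl(\mathbf{x},(\underline{y},\overline{y})\bigr)\,\mathrm{d}\mu_{C^{1:d-1}}(\mathbf{x}),
$$
so it suffices to show that the integrand is strictly positive on a subset of $R^{1:d-1}$ of positive $\mu_{C^{1:d-1}}$-measure. For $\mathbf{x}\in R^{1:d-1}$ with $M(\mathbf{x})<1$ (which excludes only the point $(1,\ldots,1)$) equation (\ref{eq:kernel_function_G_abs_dis_sing}) combined with the identification above gives
$$
K_C^{comp}\bigl(\mathbf{x},(\underline{y},\overline{y}]\bigr)=\frac{1}{\int_{I_1}t^{d-1}\,\mathrm{d}\gamma(t)}\int_{J(\mathbf{x})} t^{d-1}\,\mathrm{d}\gamma^{comp}(t),
$$
where $J(\mathbf{x}):=I_{\overline{y}}\setminus I_{\underline{y}}$ is a non-degenerate subinterval of $(0,\infty)$ by strict monotonicity of $\varphi$ on $(0,1]$ (the convention $\varphi(0)=\infty$ covering the boundary case $\underline{y}=0$). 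The full-support hypothesis on $\gamma^{comp}$ then forces $\gamma^{comp}(J(\mathbf{x}))>0$, and since $t^{d-1}>0$ on $J(\mathbf{x})$ the integral is strictly positive. Together with $\mu_{C^{1:d-1}}(R^{1:d-1})>0$ (from full support of $\mu_{C^{1:d-1}}$), this proves $\mu_C^{comp}(R)>0$.

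The main delicate point is the identification of $K_C^{abs}, K_C^{dis}, K_C^{sing}$ with $H^{abs}_\mathbf{x}, H^{dis}_\mathbf{x}, H^{sing}_\mathbf{x}$, which is a prerequisite for reading off $K_C^{comp}$ from the decomposition (\ref{eq:kernel_function_G_abs_dis_sing}). This is essentially already contained in the earlier lemmas and in Remark \ref{rem:discrete}, and once it is invoked the remaining argument reduces to the fact that, under the full-support hypothesis on $\gamma^{comp}$, the kernel formula integrates a strictly positive function against a measure charging every non-empty open sub-interval of $(0,\infty)$.
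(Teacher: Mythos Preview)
Your proposal is correct and follows essentially the same route as the paper: observe that full support of $\gamma^{comp}$ forces strictness, identify $K_C^{comp}(\mathbf{x},\cdot)$ with the sub-distribution induced by $H^{comp}_\mathbf{x}$ via Lemma~\ref{lem:absolutely_cont}, Lemma~\ref{lem:sing} and Remark~\ref{rem:discrete}, and then use that $\gamma^{comp}$ charges every non-degenerate subinterval of $(0,\infty)$ to conclude (via disintegration) that $\mu_C^{comp}$ charges every open rectangle. The only cosmetic slip is the claim ``$f^0\equiv 0$'': in the strict case $f^0$ equals $1$ on $L_0^{1:d-1}=\{\mathbf{x}:\min_i x_i=0\}$ and $0$ elsewhere, but since this boundary set has $\mu_{C^{1:d-1}}$-measure zero the argument is unaffected.
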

\begin{proof}
  To prove the first assertion, let $\mathbf{x} \not\in L_0^{1:d-1}$ with $M(\mathbf{x}) < 1$ be arbitrary but fixed and suppose that $\gamma^{abs}$ has full support, which, in particular implies that $C$ is strict. 
It follows that the function $H^{abs}_\mathbf{x}$ defined according to 
equation \eqref{eq:kernel_function_G_abs_dis_sing}
is strictly increasing and absolutely continuous, so $K_C^{abs}(\mathbf{x},\cdot)$ has support $\mathbb{I}$.
Since $\mathbf{x} \not\in L_0^{1:d-1}$ with $M(\mathbf{x}) < 1$ was arbitrary, disintegration directly yields 
that $\mu_C^{abs}$ has support $\mathbb{I}^d$.\\
The second and the third assertion can be proved analogously.
\end{proof}
The previous results open the door to deriving some first (to a certain extent surprising) results - 
we start with denseness of subclasses of Archimedean copulas with full support. 
\begin{theorem}\label{thm:archimedean_dense}
The following assertions hold:
\begin{description}
    \item[(i)] The family $\{C\in \mathcal{C}_{ar,abs}^d\colon\mathrm{supp}(\mu_C)=\mathbb{I}^d\}$ is dense in $(\mathcal{C}_{ar}^d,d_\infty)$,
    \item[(ii)] The family $\{C\in \mathcal{C}_{ar,dis}^d\colon\mathrm{supp}(\mu_C)=\mathbb{I}^d\}$ is dense in $(\mathcal{C}_{ar}^d,d_\infty)$,
    \item[(iii)] The family $\{C\in \mathcal{C}_{ar,sing}^d\colon\mathrm{supp}(\mu_C)=\mathbb{I}^d\}$ is dense in $(\mathcal{C}_{ar}^d,d_\infty)$.
\end{description}
\end{theorem}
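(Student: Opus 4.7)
The plan is to lift the problem to the level of Williamson measures. By Lemma \ref{lem:hom_archimedean} the composition $\Phi := \xi_d \circ \mathcal{W}_d \colon (\mathcal{P}_{\mathcal{W}_d},\tau_w) \to (\mathcal{C}_{ar}^d, d_\infty)$ is a homeomorphism, by Corollary \ref{cor:regularity_arch} it sends a purely absolutely continuous (respectively discrete, singular) Williamson measure to a copula in $\mathcal{C}_{ar,abs}^d$ (respectively $\mathcal{C}_{ar,dis}^d$, $\mathcal{C}_{ar,sing}^d$), and by Theorem \ref{lem:full_supp_meas_full_supp_cop} full support of the respective component of $\gamma$ entails full support of the respective component of $\mu_C$. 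It therefore suffices to prove that the three families $\mathcal{P}_{\mathcal{W}_d}^{abs} \cap \mathcal{P}_{\mathcal{W}_d}^{fs}$, $\mathcal{P}_{\mathcal{W}_d}^{dis} \cap \mathcal{P}_{\mathcal{W}_d}^{fs}$, and $\mathcal{P}_{\mathcal{W}_d}^{sing} \cap \mathcal{P}_{\mathcal{W}_d}^{fs}$ are weakly dense in $\mathcal{P}_{\mathcal{W}_d}$, and then to translate the resulting approximations back to $\mathcal{C}_{ar}^d$ via $\Phi$.

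Given $\gamma \in \mathcal{P}_{\mathcal{W}_d}$ and a type $X \in \{\text{abs},\text{dis},\text{sing}\}$, I would build the approximating sequence in the form
\begin{equation*}
  \gamma_n^{X} := (1 - \alpha_n - \beta_n)\,\widetilde{\gamma}_n^{X} + \alpha_n\,\nu_1^{X} + \beta_n\,\nu_2^{X},
\end{equation*}
where $\widetilde{\gamma}_n^{X}$ is a type-$X$ approximation of $\gamma$ converging weakly to it, and $\nu_1^{X}, \nu_2^{X}$ are two fixed type-$X$ probability measures on $[0,\infty)$ having full support, no mass at $0$, and satisfying $\int_{\mathbb{I}}(1-t)^{d-1}\mathrm{d}\nu_1^{X} < \tfrac12 < \int_{\mathbb{I}}(1-t)^{d-1}\mathrm{d}\nu_2^{X}$. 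For $\widetilde{\gamma}_n^{X}$ I would take the convolution $\gamma * U_{1/n}$ with $U_{1/n}$ uniform on $[0,1/n]$ in case (i); the dyadic discretisation $\widetilde{\gamma}_n^{\text{dis}} := \sum_{k \geq 0}\gamma\bigl([k 2^{-n},(k+1) 2^{-n})\bigr)\delta_{t_{k,n}}$ with atoms $t_{k,n} \in (k 2^{-n},(k+1) 2^{-n})$ in case (ii); and in case (iii) a singular-continuous rearrangement obtained by replacing, for every cell of the $n$-th dyadic grid, the atom of the discretisation by a scaled copy of a Cantor-type singular continuous distribution supported within that cell. As anchor measures I would use, respectively, two exponentials with different rates in case (i); two atomic probability measures with atoms on two distinct countable dense subsets of $(0,\infty)$ in case (ii); and in case (iii) two probability measures of the form $\sum_{k \geq 0} 2^{-(k+1)} \sigma_k$, where $\sigma_k$ is a shifted Minkowski-question-mark-type measure on $[k,k+1]$, which is singular-continuous with full support there.

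The weights $\alpha_n, \beta_n \geq 0$ are determined by the single linear normalization equation
\begin{equation*}
(1-\alpha_n-\beta_n)\,I_n + \alpha_n\,a + \beta_n\,b = \tfrac12, \quad I_n := \int_{\mathbb{I}}(1-t)^{d-1}\mathrm{d}\widetilde{\gamma}_n^{X},\ a := \int_{\mathbb{I}}(1-t)^{d-1}\mathrm{d}\nu_1^{X},\ b := \int_{\mathbb{I}}(1-t)^{d-1}\mathrm{d}\nu_2^{X},
\end{equation*}
together with $\alpha_n + \beta_n \in (0,1]$; since $(1-t)^{d-1}_+$ is bounded and continuous and $\widetilde{\gamma}_n^{X} \to \gamma$ weakly, $I_n \to \tfrac12$, so for $n$ large enough a solution with $\alpha_n + \beta_n \to 0$ and $\min(\alpha_n, \beta_n) > 0$ exists. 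Then $\gamma_n^{X}$ lies in $\mathcal{P}_{\mathcal{W}_d}$, is of type $X$, has full support (inherited from the anchors) and converges weakly to $\gamma$, so $\Phi(\gamma_n^X) \to \Phi(\gamma)$ in $d_\infty$. I expect case (iii) to be the main obstacle: it requires both a singular-continuous full-support anchor on $[0,\infty)$ (which the Minkowski-type construction supplies) and a singular-continuous weak approximation of an arbitrary $\gamma \in \mathcal{P}_{\mathcal{W}_d}$, for which one has to replace the atoms of the dyadic discretisation by Cantor-type singular continuous measures shrinking into vanishing neighbourhoods of those atoms and to verify that the resulting measure remains atomless with derivative zero $\lambda$-a.e.
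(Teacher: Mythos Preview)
Your approach is essentially the same as the paper's: lift to Williamson measures via the homeomorphism of Lemma \ref{lem:hom_archimedean}, approximate the target $\gamma$ by a weakly convergent sequence of type-$X$ Williamson measures with full support, and translate back to $(\mathcal{C}_{ar}^d, d_\infty)$ using Corollary \ref{cor:regularity_arch} and Theorem \ref{thm:arch_full_support}. The only difference is in how the approximating sequence is built: the paper invokes Lemma \ref{lem:dense_discrete_measures} (which in turn cites \cite{mult_arch}) to obtain already-normalized type-$X$ approximants $\gamma_n \in \mathcal{P}_{\mathcal{W}_d}$ and then mixes with a single normalized full-support anchor $\beta \in \mathcal{P}_{\mathcal{W}_d}$ via $(1-\tfrac{1}{n})\gamma_n + \tfrac{1}{n}\beta$ (so normalization is automatic), whereas you construct unnormalized type-$X$ approximants explicitly and use two anchors with integrals on opposite sides of $\tfrac12$ to simultaneously restore the normalization and inject full support.
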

\begin{proof}
We fix $C \in \mathcal{C}_{ar}^d$ and let $\gamma \in \mathcal{P}_{\mathcal{W}_d}$ denote the 
corresponding Williamson measure. 
According to Lemma \ref{lem:dense_discrete_measures} there exists a 
sequence $(\gamma_n)_{n \in \mathbb{N}}$ 
of absolutely continuous/discrete/singular Williamson measures with full support such that $(\gamma_n)_{n \in \mathbb{N}}$ converges weakly to $\gamma$. Applying \cite[Theorem 5.9]{mult_arch} yields that the sequence 
$(C_n)_{n \in \mathbb{N}}$ of corresponding Archimedean copulas converges uniformly to $C$. 
Since each $\gamma_n \in \mathcal{P}_{\mathcal{W}_d}$ has full support and is 
absolutely continuous/discrete/singular, applying Theorem \ref{thm:arch_full_support} and Corollary \ref{cor:regularity_arch} shows that each $C_n$ is absolutely continuous/discrete/singular and that 
$\mathrm{supp}(\mu_C)=\mathbb{I}^d$ holds. This completes the proof since $C \in \mathcal{C}_{ar}^d$ was
arbitrary.
\end{proof}
Using a similar idea of proof the following even stronger version of Theorem \ref{thm:archimedean_dense}, stating
that even the family of Archimedean copulas, whose absolutely continuous, discrete and singular components 
(simultaneously) have full support, is dense, can be shown.
\begin{Cor}
The family $\{C\in \mathcal{C}_{ar}^d\colon\mathrm{supp}(\mu_C^{abs})=\mathrm{supp}(\mu_C^{dis})=\mathrm{supp}(\mu_C^{sing})=\mathbb{I}^d\}$ is dense in $(\mathcal{C}_{ar}^d,d_\infty)$.
\end{Cor}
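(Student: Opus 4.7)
The plan is to mimic the proof of Theorem \ref{thm:archimedean_dense} but with a three-way mixture of Williamson measures, one from each of the three classes (absolutely continuous, discrete, singular) with full support. Fix an arbitrary $C \in \mathcal{C}_{ar}^d$ with corresponding Williamson measure $\gamma \in \mathcal{P}_{\mathcal{W}_d}$. Applying Lemma \ref{lem:dense_discrete_measures} three times, I obtain sequences $(\alpha_n)_n \subseteq \mathcal{P}_{\mathcal{W}_d}^{abs}$, $(\beta_n)_n \subseteq \mathcal{P}_{\mathcal{W}_d}^{dis}$, and $(\sigma_n)_n \subseteq \mathcal{P}_{\mathcal{W}_d}^{sing}$, each consisting of full-support Williamson measures, such that $\alpha_n, \beta_n, \sigma_n \to \gamma$ weakly as $n \to \infty$.

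Next I set $\gamma_n := \tfrac{1}{3}(\alpha_n + \beta_n + \sigma_n)$. Since the normalization condition defining $\mathcal{P}_{\mathcal{W}_d}$ is preserved under convex combinations (and since each summand satisfies $\gamma(\{0\}) = 0$), it follows that $\gamma_n \in \mathcal{P}_{\mathcal{W}_d}$. Moreover $\gamma_n \to \gamma$ weakly, and by uniqueness of the Lebesgue decomposition we have
\begin{equation*}
\gamma_n^{abs} = \tfrac{1}{3}\alpha_n, \qquad \gamma_n^{dis} = \tfrac{1}{3}\beta_n, \qquad \gamma_n^{sing} = \tfrac{1}{3}\sigma_n,
\end{equation*}
so all three components of $\gamma_n$ are non-degenerated and have full support $[0,\infty)$.

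Let $C_n$ denote the $d$-dimensional Archimedean copula associated with $\gamma_n$. Invoking \cite[Theorem 5.9]{mult_arch} (equivalently Lemma \ref{lem:hom_archimedean}) yields $d_\infty(C_n, C) \to 0$. Applying each of the three assertions of Theorem \ref{lem:full_supp_meas_full_supp_cop} to $\gamma_n$ gives that $\mu_{C_n}^{abs}$, $\mu_{C_n}^{dis}$, and $\mu_{C_n}^{sing}$ each have support equal to $\mathbb{I}^d$. Thus $C_n$ lies in the claimed family for every $n$, and since $C$ was arbitrary, the density assertion follows.

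The only step requiring any real work has already been done upstream: Lemma \ref{lem:dense_discrete_measures} is the substantive ingredient (it provides the three approximating sequences of Williamson measures of prescribed regularity and full support), and Theorem \ref{lem:full_supp_meas_full_supp_cop} is what lets us transfer the full-support property from the three individual components of $\gamma_n$ to the three corresponding components of $\mu_{C_n}$. The only potential subtlety worth double-checking is that the Lebesgue decomposition of a sum of mutually singular pieces indeed distributes termwise as used above; this is immediate since $\alpha_n$, $\beta_n$, $\sigma_n$ are pairwise singular with respect to each other.
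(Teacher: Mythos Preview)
Your proof is correct and follows essentially the same route as the paper's: fix $C$, take three approximating full-support sequences from Lemma \ref{lem:dense_discrete_measures} (one absolutely continuous, one discrete, one singular), form the convex combination $\gamma_n=\tfrac{1}{3}(\alpha_n+\beta_n+\sigma_n)$, and then invoke Theorem \ref{lem:full_supp_meas_full_supp_cop} together with the homeomorphism from Lemma \ref{lem:hom_archimedean}. You even supply a bit more detail than the paper (checking that $\gamma_n\in\mathcal{P}_{\mathcal{W}_d}$ and that the Lebesgue decomposition splits termwise), which is fine.
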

\begin{proof}
Fix $C \in \mathcal{C}_{ar}^d$ and let $\gamma \in \mathcal{P}_{\mathcal{W}_d}$ denote the 
corresponding Williamson measure. According to Lemma \ref{lem:dense_discrete_measures} there exist sequences 
$(\gamma_n^{(1)})_{n \in \mathbb{N}}$, $(\gamma_n^{(2)})_{n \in \mathbb{N}}$ and $(\gamma_n^{(3)})_{n \in \mathbb{N}}$ of
purely absolutely continuous, discrete and singular Williamson measures having full support which all converge weakly to
$\gamma$. Considering $$\gamma_n := \frac{1}{3}\left(\gamma_n^{(1)} + \gamma_n^{(2)} + \gamma_n^{(3)}\right)$$ for every
$n \in \mathbb{N}$, using Theorem \ref{lem:full_supp_meas_full_supp_cop}, and proceeding as in the proof of 
Theorem \ref{thm:archimedean_dense} the desired result follows. 
\end{proof}
\section{Regularity of the Kendall distribution function of Archimedean copulas}\label{sec:kendall}
In this short section we revisit the interplay between the Williamson measure $\gamma$ and the Kendall distribution 
function $F_K^d$  of the corresponding copula $C\in \mathcal{C}_{ar}^d$ and show, loosely speaking, that 
regularity of $C$ (and hence of $\gamma$) goes hand in hand with regularity of $F_K^d$. 
Note that in the full class of copulas a similar behavior does not hold. In fact, considering $d=2$
and the minimum copula $M$, the Kendall distribution function is given by $F_K^2(t) = t$ 
(see, e.g., \cite[Example 3.9.6.]{dur_princ}), so $M$ is discrete although its corresponding 
Kendall distribution function is absolutely continuous. 

Theorem \ref{thm:lvl_sets_kendall_williamson} shows that the Kendall distribution function $F_K^d$ can 
nicely be re\-presented in terms of $\gamma$ (and the quasi-inverse $\varphi$ of the generator $\psi$). 
The subsequent lemma provides an expression for $\gamma$ via $F_K^d$, which we will use in the sequel:
\begin{Lemma}\label{lem_gamma.from.K}
Consider $C \in \mathcal{C}_{ar}^d$ and let $\psi$, $\gamma$, and $F_K^d$ denote the 
corresponding generator, Williamson measure and Kendall distribution function, respectively. 
Then the following identity holds for all $z \in [0,\infty)$:
      \begin{equation}\label{eq:con_kendall_williamson}
     \gamma([0,z]) = \begin{cases}
        F_K^d(\psi(\frac{1}{z})),& \text{ if } z \in [\frac{1}{\varphi(0)},\infty)\\ 
        0,& \text{ if } z \in [0,\frac{1}{\varphi(0)}). 
     \end{cases}
    \end{equation}
Moreover, the non-decreasing transformation $h: [0,\infty] \rightarrow \mathbb{I}$, defined by $h(z)=\psi(\frac{1}{z})$ 
fulfills $h(0)=0$ and is non-singular in the sense that for every $F \in \mathcal{B}([0,\infty))$ we have that 
$\lambda(F)=0$ implies $\lambda^h(F)=0$.   
\end{Lemma}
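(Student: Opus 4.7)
The plan is to read off the first identity from Theorem \ref{thm:lvl_sets_kendall_williamson} and to establish the non-singularity claim via the local Lipschitz regularity of $\psi$. Starting from $F_K^d(t) = \gamma([0,1/\varphi(t)])$ for every $t \in \mathbb{I}$, I would substitute $t := \psi(1/z)$: for $z \in [1/\varphi(0),\infty)$ the argument $1/z$ lies in $(0,\varphi(0)]$, i.e., on the interval where $\psi$ is strictly decreasing, so $\varphi \circ \psi$ is the identity there and the substitution yields $\gamma([0,z]) = F_K^d(\psi(1/z))$. For $z \in [0,1/\varphi(0))$ (only relevant in the non-strict case) one has $1/z > \varphi(0)$ and hence $\psi(1/z) = 0$; combining this with the equivalence $\gamma([0,r)) = 0 \Leftrightarrow \psi(1/r) = 0$ noted right after Lemma \ref{lem:hom_archimedean} and with monotonicity of $\gamma$ forces $\gamma([0,z]) = 0$. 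The boundary value $h(0) = \psi(\infty) = 0$ is immediate from the conventions $1/0 = \infty$ and $\psi(\infty) = 0$ introduced in Section \ref{section:arch_cop}.

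For the non-singularity assertion the key observation is that $\psi$, being $d$-monotone with $d \geq 2$, is convex on $[0,\infty)$ and therefore locally Lipschitz on $(0,\varphi(0))$. Composing with the smooth strictly monotone map $z \mapsto 1/z$ shows that $h$ is locally Lipschitz, and in fact strictly increasing, on $(1/\varphi(0),\infty)$; in the non-strict setting $h$ is moreover constantly $0$ on $[0,1/\varphi(0)]$. Hence $h$ is locally absolutely continuous on $[0,\infty)$ and enjoys Luzin's N-property, so that the image under $h$ of every $\lambda$-null $F \in \mathcal{B}([0,\infty))$ is again $\lambda$-null, which is the desired $\lambda^h(F) = 0$. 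This is precisely the composition-of-absolutely-continuous-functions argument that was already invoked in the proof of Lemma \ref{lem:sing} via \cite[Theorem 3.41]{leoni2024}, and I would apply it verbatim here.

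The main delicacy I anticipate concerns the non-strict case, where $h^{-1}(\{0\}) = [0,1/\varphi(0)]$ has positive Lebesgue measure; a literal push-forward reading of $\lambda^h$ would therefore break at the atom $\{0\}$. This flat piece, however, corresponds by Theorem \ref{thm:lvl_sets_kendall_williamson} precisely to the atom $F_K^d(0)>0$ and is already accounted for by the second case of the explicit formula, so the cleanest presentation is to verify the Luzin-type property separately on the two pieces $[0,1/\varphi(0)]$ and $(1/\varphi(0),\infty)$, noting the trivial behaviour of $h$ on the former and invoking local Lipschitz continuity on the latter.
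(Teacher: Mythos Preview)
Your derivation of the identity for $\gamma([0,z])$ is correct and matches the paper's argument.

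The non-singularity argument, however, has the direction reversed. You establish Luzin's N-property for $h$ (images of null sets under $h$ are null) and then write ``which is the desired $\lambda^h(F) = 0$'', but $\lambda^h(F) = \lambda(h^{-1}(F))$ concerns \emph{preimages}, not images. These are genuinely different: a strictly increasing absolutely continuous function can have derivative $0$ on a set $E$ of positive Lebesgue measure (e.g.\ $h(x) = \int_0^x \mathbf{1}_{E^c}$ for a fat Cantor set $E$), and then $h(E)$ is $\lambda$-null while $\lambda^h(h(E)) = \lambda(h^{-1}(h(E))) = \lambda(E) > 0$. So local Lipschitz continuity of $h$ alone cannot yield $\lambda^h \ll \lambda$.

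What is actually needed is Luzin's N-property for the \emph{inverse} map $g(t) = 1/\varphi(t)$ on $(0,1)$, since on the range where $h$ is a bijection one has $h^{-1}(F) = g(F)$. This is precisely what the paper's terse hint is pointing to: $\varphi$ is convex (part of ``$\psi$ is a generator''), hence locally Lipschitz on $(0,1)$, and $s \mapsto 1/s$ is locally Lipschitz on $(0,\infty)$, so their composition $g$ is locally Lipschitz and therefore maps $\lambda$-null sets to $\lambda$-null sets. Your final-paragraph observation about the atom at $\{0\}$ in the non-strict case is correct and indeed calls for separate treatment, but it does not repair the main gap.
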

\begin{proof}
As already mentioned in Section 2 for every $r \in (0,\infty)$
we have $\gamma([0,r))=0$ if, and only if $\psi(\frac{1}{r})=0$. This directly yields $\gamma([0,z])=0$
for every $z \in [0,\frac{1}{\varphi(0)})$. \\
On the other hand, for $z \geq \frac{1}{\varphi(0)}$ there exists a unique $t \in \mathbb{I}$ with 
$\varphi(t)=\frac{1}{z}$, and this $t$ coincides with $\psi(\frac{1}{z})$. Applying 
equation \eqref{eq:kendall_williamson} the desired identity $\gamma([0,z]) = F_K^d(\psi(\frac{1}{z}))$ follows. \\
The second assertion is a direct consequence of the facts that (i) $\psi$ is a generator and 
(ii) the mapping $z \mapsto \frac{1}{z}$ is Lipschitz continuous on every interval of the form $[a,1]$ with $a>0$.   
\end{proof}
As next step we use the previous lemma to characterize regularity of the Kendall distribution functions in terms of 
regularity of the Williamson measure. Having in mind the results from the previous section the following 
statements are not surprising.
\begin{theorem}\label{thm:reg_williamson_kendall}
    Let $C \in \mathcal{C}_{ar}^d$, $\gamma \in \mathcal{P}_{\mathcal{W}_d}$ be the corresponding Williamson measure, 
    $F_K^d$ be the Kendall distribution function of $C$ and $\kappa_{F_K^d}$ the probability measure
    corresponding to $F_K^d$. Then the following equivalences hold:
    \begin{itemize}
        \item[(i)] $\gamma$ is absolutely continuous if, and only if $\kappa_{F_K^d}$is absolutely continuous.
         \item[(ii)] $\gamma$ is discrete if, and only if $\kappa_{F_K^d}$ is discrete.
          \item[(iii)] $\gamma$ is singular if, and only if $\kappa_{F_K^d}$ is singular.
    \end{itemize}
\end{theorem}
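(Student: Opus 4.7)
The plan is to represent $\kappa_{F_K^d}$ as the push-forward of $\gamma$ under the map $h:[0,\infty]\to\mathbb{I}$, $h(z):=\psi(1/z)$, and then transfer the Lebesgue decomposition back and forth via $h$. The identity $\kappa_{F_K^d}=\gamma^h$ is an immediate consequence of Theorem \ref{thm:lvl_sets_kendall_williamson} (and Lemma \ref{lem_gamma.from.K}), since $\psi(1/z)\leq t$ is equivalent to $z\leq 1/\varphi(t)$ by monotonicity of $\psi$ and the pseudo-inverse relation, so that $\gamma^h([0,t])=\gamma([0,1/\varphi(t)])=F_K^d(t)$.

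The key technical step is to verify that $h$ acts as a bi-Lipschitz-like homeomorphism on the effective support of $\gamma$. As recalled in Section \ref{section:arch_cop}, $\gamma([0,1/\varphi(0)))=0$, so $\gamma$ is concentrated on $D:=[1/\varphi(0),\infty]$; on $D$ the map $h$ is a continuous bijection onto $\mathbb{I}$ with inverse $h^{-1}(t)=1/\varphi(t)$. Lemma \ref{lem_gamma.from.K} already records that $h$ is non-singular, i.e.\ $\lambda(B)=0$ implies $\lambda(h^{-1}(B))=0$. I would then show that $h^{-1}$ is non-singular as well, which follows from convexity (and hence local Lipschitz continuity) of $\varphi$ on $(0,1]$ together with the fact that $\varphi$ is bounded away from $0$ on every compact subinterval of $[0,1)$; equivalently, $h$ is locally Lipschitz on $(1/\varphi(0),\infty)$ thanks to convexity of $\psi$ and smoothness of $z\mapsto 1/z$ on $(0,\infty)$, so $h$ also maps Lebesgue null sets to Lebesgue null sets.

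Given this bi-Lipschitz-like structure, the three equivalences follow by a clean transfer argument. For (i): if $\gamma\ll\lambda$ and $B\subseteq\mathbb{I}$ is $\lambda$-null, then Lemma \ref{lem_gamma.from.K} gives $\lambda(h^{-1}(B))=0$, whence $\kappa_{F_K^d}(B)=\gamma(h^{-1}(B))=0$; conversely, if $\kappa_{F_K^d}\ll\lambda$ and $A\subseteq D$ is $\lambda$-null, then $h(A)$ is null by the null-set preservation of $h$, and injectivity on $D$ yields $\gamma(A)=\kappa_{F_K^d}(h(A))=0$. For (ii): bijectivity of $h$ on $D$ puts point masses of $\gamma$ in one-to-one correspondence with point masses of $\kappa_{F_K^d}$, so either both measures are purely discrete or neither is. For (iii): combine the absence-of-point-masses argument from (ii) with the observation that concentration on a Lebesgue null set is preserved under both $h$ and $h^{-1}$; by the uniqueness of the Lebesgue decomposition the singular components of $\gamma$ and $\gamma^h$ correspond.

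The main bookkeeping difficulty lies in the non-strict case, where $h$ is only a homeomorphism after restricting to $D=[1/\varphi(0),\infty]$ and the potential atom of $\gamma$ at $1/\varphi(0)$ corresponds to the (possibly non-trivial) point mass of $\kappa_{F_K^d}$ at $0$. Restricting all arguments to $D$ eliminates this subtlety, and the strict case is recovered as the special case $1/\varphi(0)=0$.
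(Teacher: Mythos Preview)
Your proposal is correct and follows essentially the same route as the paper: both identify $\kappa_{F_K^d}$ as the push-forward $\gamma^h$ via $h(z)=\psi(1/z)$, establish that $h$ and its inverse $g(t)=1/\varphi(t)$ are both locally Lipschitz (hence preserve Lebesgue null sets in both directions), and then transfer the three decomposition types back and forth. The only stylistic difference is that for part~(iii) the paper carries out an explicit chain-rule computation to show $(F_K^d)'=0$ $\lambda$-a.e., whereas you argue more abstractly via null-set preservation and uniqueness of the Lebesgue decomposition; both are valid and amount to the same thing.
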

\begin{proof}
We start with a general observation concerning the function $h$ from Lemma \ref{lem_gamma.from.K} which we will use 
for proving the equivalences. For every $t \in \mathbb{I}$ using Theorem \ref{thm:lvl_sets_kendall_williamson}
we have 
\begin{align}\label{eq:gammah}
\gamma^h([0,t]) &= \gamma\left(\left\{z \in \mathbb{I}: h(z) \leq t \right\}\right) = 
\gamma\left(\left\{z \in \mathbb{I}: \frac{1}{z} \geq \varphi(t) \right\}\right) \nonumber \\
&= \gamma\left(\left[0, \frac{1}{\varphi(t)} \right]\right) = F_K^d(t) = \kappa_{F_K^d}([0,t]).
\end{align}
In other words, the measure $\kappa_{F_K^d}$ coincides with the push-forward of $\gamma$ via $h$. \\ 
(i) Suppose now that $\gamma$ is absolutely continuous. Then, using the fact that $h$ is non-singular 
it follows that $\gamma^h$ is absolutely continuous on $\mathcal{B}((0,1])$. Moreover
in this case we have 
$$
\gamma^h(\{0\}) = \gamma\left(\left[0, \frac{1}{\varphi(0)} \right]\right) = 
\gamma\left(\left[0, \frac{1}{\varphi(0)} \right)\right)=0,
$$
so altogether $\gamma^h=\kappa_{F_K^d}$ is absolutely continuous. \\
To prove the reverse implication we may proceed in the same manner working with the injective function
$g: \mathbb{I} \rightarrow [0,\infty]$, defined by $g(z)=\frac{1}{\varphi(z)}$ for 
every $z \in \mathbb{I}$, instead of the function $h$,
and use the fact that $\kappa_{F_K^d}^g=\gamma$.  \\
(ii) Immediate consequence of equation (\ref{eq:kendall_williamson}) and the fact that 
the afore-mentioned function $g$ is injective.  \\
(iii) If $\gamma$ is singular then using equation (\ref{eq:gammah}) it follows that $\kappa_{F_K^d}$ 
has no point-masses. Moreover, letting $H_\gamma$ denote the distribution function of $\gamma$, singularity 
implies that the set $E \in \mathcal{B}([0,\infty))$, defined by
$$
 E := \{z \in [0,\infty) \colon F_\gamma'(z) \text{ exists and } F_\gamma'(z) \neq 0\}
$$
fulfills $\lambda(E) = 0$. 
Working with the fact that the function $g,h$ are locally Lip\-schitz continuous it follows (see, e.g., 
\cite[Theorem 3.41]{leoni2024}) that the set $\Upsilon := \left\{t \in (0,1)\colon g(t) \in E\right\}$ 
fulfills $\lambda(\Upsilon)=0$, implying that
    $$
    \frac{\mathrm{d}}{\mathrm{d}t}F_K^d(t) = 
    -F_\gamma'\left(\frac{1}{\varphi(t)}\right)\frac{\varphi'(t)}{\varphi(t)^2} = 0
    $$
    holds for $\lambda$-almost every $t \in \mathbb{I}$. This shows that $\kappa_{F_K^d}$ is singular. \\
   The reverse implication can be shown using analogous arguments and working with equation 
   (\ref{eq:con_kendall_williamson}).  
\end{proof}
Applying the results from the previous section yields the following: 
\begin{Cor}
Let $C \in \mathcal{C}_{ar}^d$ and $\gamma$ and $F_K^d$ denote the corresponding Williamson measure and 
Kendall distribution function, respectively. Then the following equivalences hold:
\begin{itemize}
\item[(i)] $C$ is absolutely continuous if, and only if $\kappa_{F_K^d}$ is absolutely continuous.
\item[(ii)] $C$ is discrete if, and only if $\kappa_{F_K^d}$ is discrete.
\item[(iii)] $C$ is singular if, and only if $\kappa_{F_K^d}$ is singular.
\end{itemize}
\end{Cor}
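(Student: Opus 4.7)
The plan is to chain together two equivalences that have already been established earlier in the paper. Specifically, Corollary \ref{cor:regularity_arch} asserts that $C$ is absolutely continuous/discrete/singular if, and only if the associated Williamson measure $\gamma \in \mathcal{P}_{\mathcal{W}_d}$ has the corresponding regularity property, and Theorem \ref{thm:reg_williamson_kendall} asserts that $\gamma$ is absolutely continuous/discrete/singular if, and only if $\kappa_{F_K^d}$ has the matching regularity property. Composing these two biconditionals yields the three equivalences in the statement.

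More concretely, for (i) I would argue as follows: $C$ is absolutely continuous, by definition and by Corollary \ref{cor:regularity_arch}(i), exactly when $\gamma$ is absolutely continuous; by Theorem \ref{thm:reg_williamson_kendall}(i) the latter is equivalent to absolute continuity of $\kappa_{F_K^d}$. The same two-step argument, invoking parts (ii) of both results, gives the discrete case, and invoking parts (iii) gives the singular case. No further regularity analysis of the kernel $K_C$ or of the level-set functions $f^t$ is needed here, since all the heavy lifting — propagation of absolute continuity, discreteness, and singularity between $\gamma$ and $\mu_C$ and between $\gamma$ and $\kappa_{F_K^d}$ — has been carried out in Section \ref{section:regularity} and in Theorem \ref{thm:reg_williamson_kendall}.

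There is no genuine obstacle: the corollary is a purely formal consequence of two previously established theorems, and the only thing to be careful about is making sure that the notion of ``absolutely continuous/discrete/singular copula'' used in Corollary \ref{cor:regularity_arch} agrees with the one appearing in the statement of this corollary, which it does by the definitions fixed in Section \ref{section:arch_cop} (namely $\mu_C^{abs}(\mathbb{I}^d)=1$, $\mu_C^{dis}(\mathbb{I}^d)=1$, or $\mu_C^{sing}(\mathbb{I}^d)=1$, respectively). So the proof will consist of one short paragraph citing Corollary \ref{cor:regularity_arch} and Theorem \ref{thm:reg_williamson_kendall} and noting that the three equivalences follow by transitivity.
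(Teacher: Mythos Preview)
Your proposal is correct and matches the paper's own approach: the corollary is stated immediately after Theorem~\ref{thm:reg_williamson_kendall} with only the remark that it follows by ``applying the results from the previous section,'' i.e., exactly the chaining of Corollary~\ref{cor:regularity_arch} and Theorem~\ref{thm:reg_williamson_kendall} that you describe.
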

\section{Derivatives of multivariate Archimedean copulas}\label{section:derivatives}
In \cite{dietrich2024} it was shown that `derivatives of copulas have to be handled with care', a statement 
that was underlined by showing the existence of a dense class of bivariate copulas $C$ of the following type: 
for $\lambda$-almost every $x \in (0,1)$ the partial derivative $\partial_1C(x,y)$ 
does not exist on a dense subset of $y \in (0,1)$. 
Motivated by these results in this section we tackle the question if - in the sense of the derivative - 
analogously pathological copulas also exist in the Archimedean family. \\
It is well known (and follows directly from the properties of generators) 
that every $C \in \mathcal{C}_{ar}^d$ is continuously differentiable up to order $d-2$ (see \cite{neslehova}).
Moreover, as mentioned in the introduction, for every fixed $y \in (0,1)$ the partial derivative 
$\partial_1...\partial_{d-1}C(\mathbf{x},y)$ exists for $\lambda_{d-1}$-almost every 
$\mathbf{x} = (x_1,...,x_{d-1}) \in \mathbb{I}^{d-1}$. \\ 
We will prove in this section that it is not possible to go much further. As main result 
we will show that $\mathcal{C}_{ar}^d$ contains copulas $C$ of the following pathological type: 
there exists some set $\Lambda \in \mathcal{B}(\mathbb{I}^{d-1})$ with 
$\lambda_{d-1}(\Lambda) = 1$ such that for every $\mathbf{x} \in \Lambda$ the $(d-1)$-st order partial 
derivative $\partial_1\partial_2...\partial_{d-1}C(\mathbf{x},y)$ does not exist
on a dense set of $y \in (0,1)$. The family of all such $d$-dimensional Archimedean copulas will be 
denoted by $\mathcal{C}_{ar,\mathcal{Q}}^d$.  
As in \cite{dietrich2024} we first study the class $\mathcal{C}_{ar,p}^d$ of all $d$-dimensional
Archimedean copulas fulfilling that for every $\mathbf{x}$ from a set $\Lambda \in \mathcal{B}(\mathbb{I}^{d-1})$ with 
$\mu_{C^{1:d-1}}(\Lambda) > 0$ there exists some $y := y_\mathbf{x} \in (0,1)$ such that the $(d-1)$-st order 
partial derivative $\partial_1\partial_2...\partial_{d-1}C(\mathbf{x},y)$ does not exist.\\
We start with the following simple example of an element in $\mathcal{C}_{ar,p}^3$ illustrating the 
ideas for the general setting. 
\begin{Ex}\label{ex:non_diff}
\normalfont
    Considering the Williamson measure $\gamma = \frac{32}{49} \, \delta_{\frac{1}{8}} + \frac{17}{49} \, \delta_{2}$ 
    the Williamson $3$-transform (\ref{eq:williamson_transform}) yields the non-strict generator 
    $\psi \colon [0,\infty) \rightarrow \mathbb{I}$, given by 
    $$
    \psi(z) = \begin{cases}
			 \frac{137z^2-152z+98}{98}, & \text{ if } z \in [0,\frac{1}{2}], \\
			 \frac{32}{49}(1-\frac{z}{8})^2, & \text{ if } z \in (\frac{1}{2},8], \\
			0, & \text{ otherwise.}
		\end{cases}
    $$
    Theorem \ref{thm:lvl_sets_kendall_williamson} implies that the graphs of the functions $f^{a}$ 
    with $a=0 = \psi(8)$ and $a=\psi(2)=\frac{225}{392}$ have mass $\frac{32}{49}$ and $\frac{17}{49}$, respectively.  
    We will focus on $f^0$ and show that for every $\mathbf{x} \not\in L_0^{1:2}$ with $M(\mathbf{x})<1$ 
    we have that for $y_\mathbf{x} = f^0(\mathbf{x}) \in (0,1)$ the mixed derivative 
     $\partial_1\partial_2C(\mathbf{x},y_\mathbf{x})$ does not 
    exist. Notice that this suffices since we have $\mu_{C^{1:2}}(L_0^{1:2} \cup M^{-1}(\{1\}))=0$. \\
    Fix $\mathbf{x}_0 = (x_{0,1},x_{0,2}) \not\in (L_0^{1:2} \cup M^{-1}(\{1\}))$ 
    and consider $y := y_{\mathbf{x}_0} := f^0(\mathbf{x}_0) \in (0,1)$. 
    Then applying change of coordinates, using the fact that the function $-\psi'$ is convex and that we
    have $\varphi(y)>0$,
    together with \cite[Theorem 3.7.4]{kannan1996} and \cite[Appendix C]{Pollard} yields that
    \begin{align*}
        \int_{[0,x_{0,1}]}K_C(s_1,s_2,[0,y])&c^{1:2}(s_1,s_2) \mathrm{d}\lambda(s_1) \\&=
        \varphi'(s_2)\int_{[0,x_{0,1}]} \varphi'(s_1)D^-\psi'(\varphi(s_1) + \varphi(s_2) + \varphi(y)) \mathrm{d}\lambda(s_1) \\&=
        \varphi'(s_2)\psi'(\varphi(x_{0,1}) + \varphi(s_2) + \varphi(y)).
    \end{align*}
    Using disintegration and differentiability of $C$ (of order $1$) it follows that 
    $$
    \partial_2 C(\mathbf{x}_0,y) = \int_{[0,x_{0,1}]}K_C(s_1,x_{0,2},[0,y]) c^{1:2}(s_1,x_{0,2}) \mathrm{d}\lambda(s_1).
    $$
     Applying Lemma \ref{lem:function_G}, for $s_1 > x_{0,1}$ we get 
    \begin{align*}
    K_C(s_1,x_{0,2},[0,y]) c^{1:2}(s_1,x_{0,2}) &= \varphi'(s_1)\varphi'(x_{0,2})D^-\psi'(\varphi(s_1) + \varphi(x_{0,2}) + \varphi(y)) \\&=
    \varphi'(s_1)\varphi'(x_{0,2})\bigg(\frac{1}{49}\mathbf{1}_{(0,\frac{1}{\varphi(s_1) - \varphi(x_{0,1}) + 8}]}\left(\frac{1}{8}\right) \\&\quad+ \frac{136}{49} \mathbf{1}_{(0,\frac{1}{\varphi(s_1) - \varphi(x_{0,1}) + 8}]}\left(2\right) \bigg).
    \end{align*}
    Using the fact that $\frac{1}{\varphi(s_1) - \varphi(x_{0,1}) + 8} > \frac{1}{8}$ and calculating 
    the right-hand partial derivative $\partial_1^+\partial_2 C(\mathbf{x}_0,y)$ it follows that
    \begin{align*}
    \partial_1^+\partial_2 C(\mathbf{x}_0,y) &= \lim_{h \downarrow 0}\frac{1}{h} \int_{(x_{0,1},x_{0,1}+h]}K_C(s_1,x_{0,2},[0,y]) c^{1:2}(s_1,x_{0,2}) \mathrm{d}\lambda(s_1) \\&=
    \frac{\varphi'(x_{0,2})}{49}\lim_{h \downarrow 0}\frac{1}{h}\int_{(x_{0,1},\,x_{0,1}+h]} \varphi'(s_1) \mathrm{d}\lambda(s_1) \\&=
    \frac{\varphi'(x_{0,1})\varphi'(x_{0,2})}{49} > 0.
    \end{align*}
    On the other hand, using the fact that for $s_1<x_{0,1}$ we have 
    $\frac{1}{\varphi(s_1) - \varphi(x_{0,1}) + 8} < \frac{1}{8}$, proceeding analogously 
    for the left-hand partial derivative $\partial_1^-\partial_2 C(\mathbf{x}_0,y) $ we obtain 
    \begin{align*}
    \partial_1^-\partial_2 C(\mathbf{x}_0,y) &= \lim_{h \downarrow 0}\frac{1}{h} \int_{(x_{0,1}-h,\,x_{0,1}]}K_C(s_1,x_{0,2},[0,y]) c^{1:2}(s_1,x_{0,2}) \mathrm{d}\lambda(s_1) \\&= 0.
    \end{align*}
    This shows $\partial_1^+\partial_2C(\mathbf{x}_0,y) \neq \partial_1^-\partial_2C(\mathbf{x}_0,y)$, implying 
    that $\partial_1\partial_2C(\mathbf{x}_0,y)$ does not exist. 
    The graphs of the functions $f^a$ with $a \in \{0,\frac{225}{392}\}$ are depicted in Figure \ref{fig:3d_lvl_sets}.        
    These contain the points $(\mathbf{x},y) \in \mathbb{I}^3$ where the 
    derivative $\partial_1\partial_2C(\mathbf{x},y)$ does not exist. 
\end{Ex}
\begin{figure}[!ht]
	\centering
\includegraphics[width=1\textwidth]{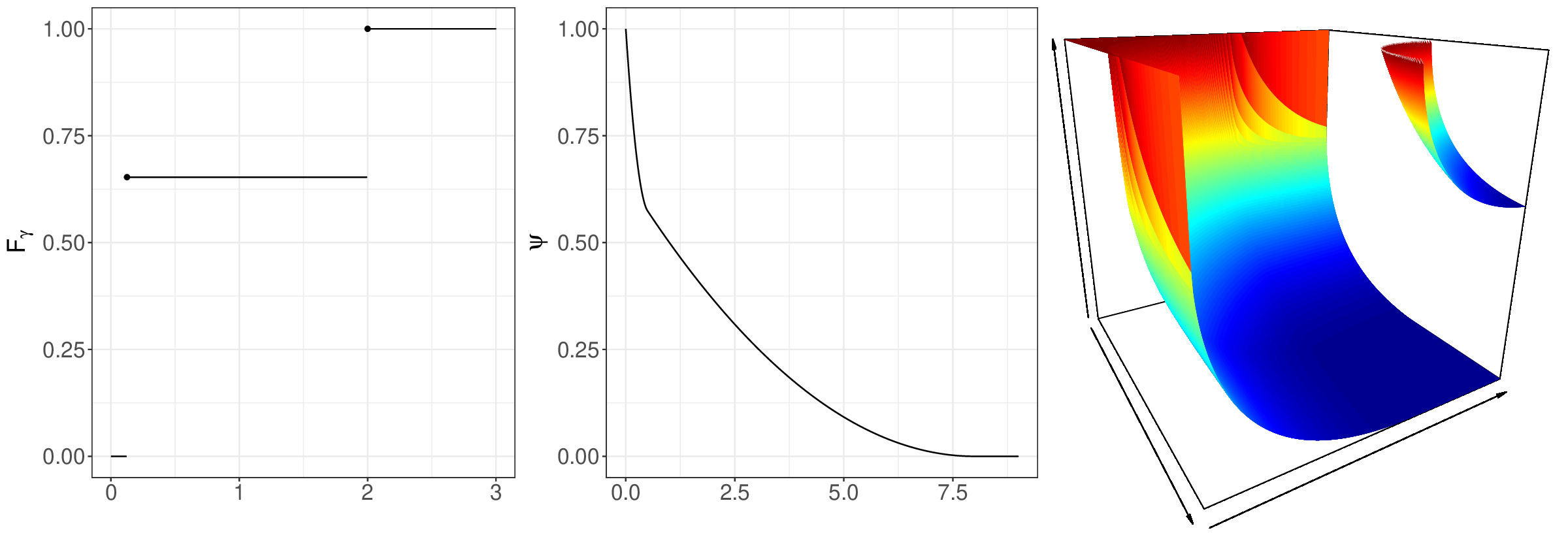}
	\caption{Plots of the distribution function $F_\gamma$ (left panel) of the 
	Williamson measure $\gamma$ considered in Example \ref{ex:non_diff}, the associated generator $\psi$ (middle) 
	and the graphs of the functions $f^a$ with $a \in \{0,\frac{225}{392}\}$ of the corresponding 
	Archimedean copula $C$ (right panel).}
\label{fig:3d_lvl_sets}
\end{figure}
We now focus on the class $\mathcal{C}_{ar,\mathcal{Q}}^d$ and show that it is non-empty.
\begin{theorem}[Non-differentiability on dense subset]\label{thm:Archimedean_non_diff}
There are copulas $C \in \mathcal{C}_{ar}^d$ with the following property: there is some set 
$\Lambda \in \mathcal{B}(\mathbb{I}^{d-1})$ with $\lambda_{d-1}(\Lambda) = 1$ such that for 
every $\mathbf{x} \in \Lambda$ it holds that $\partial_1...\partial_{d-1}C(\mathbf{x},y)$ does not exist
for a dense set of $y \in \mathbb{I}$. \\
The same result still holds for any ordering of the mixed partial derivatives of order $(d-1)$. 
\end{theorem}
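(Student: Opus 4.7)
The plan is to realize $C\in\mathcal{C}_{ar,\mathcal{Q}}^d$ as $C=C_\psi$ where $\psi=\mathcal{W}_d\gamma$ and $\gamma$ is a discrete strict Williamson measure with countable dense support. Concretely, fix a countable dense subset $\{z_j\}_{j\in\mathbb{N}}$ of $(0,\infty)$ (so that $\{u_j\}_{j\in\mathbb{N}}$ with $u_j:=1/z_j$ is also dense in $(0,\infty)$), pick positive weights $\alpha_j$ satisfying $\sum_j\alpha_j=1$ and $\sum_j\alpha_j(1-z_j)_{+}^{d-1}=\tfrac12$, and set $\gamma:=\sum_j\alpha_j\delta_{z_j}\in\mathcal{P}_{\mathcal{W}_d}^{dis}$. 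Since $\{z_j\}$ accumulates at $0$, we have $\gamma\in\mathcal{P}_{\mathcal{W}_d}^{s}$, so $\psi:=\mathcal{W}_d\gamma$ is strict; let $C:=C_\psi$. By Lemma \ref{lem:equivalence_point_mass}, every $u_j$ is a jump point of $D^{-}\psi^{(d-2)}$.

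Define $\Lambda:=\{\mathbf{x}\in(0,1)^{d-1}:M(\mathbf{x})<1\text{ and }\varphi'(x_i)\text{ exists for all }i\}$, which has $\lambda_{d-1}(\Lambda)=1$ by convexity of $\varphi$. For $\mathbf{x}\in\Lambda$ write $S(\mathbf{x}):=\sum_{i=1}^{d-1}\varphi(x_i)\in(0,\infty)$ and, for each $j$ with $u_j>S(\mathbf{x})$, define the candidate bad point $y_j(\mathbf{x}):=\psi(u_j-S(\mathbf{x}))\in(0,1)$; by construction $(\mathbf{x},y_j(\mathbf{x}))\in L_{\psi(u_j)}$. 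The key technical step is to prove that $\partial_1\cdots\partial_{d-1}C(\mathbf{x},y_j(\mathbf{x}))$ does not exist. Iterating the disintegration computation of Example \ref{ex:non_diff} yields, wherever $\psi^{(d-2)}$ is well-defined on $(0,\infty)$, the closed form
$$\partial_2\cdots\partial_{d-1}C(\mathbf{x},y)=\psi^{(d-2)}\!\bigl(S(\mathbf{x})+\varphi(y)\bigr)\prod_{i=2}^{d-1}\varphi'(x_i).$$
Applying the chain rule for one-sided derivatives in $x_1$ at $y=y_j(\mathbf{x})$, and using that $x_1\mapsto S(\mathbf{x})+\varphi(y_j(\mathbf{x}))$ is strictly decreasing with nonzero derivative $\varphi'(x_1)$, the difference $\partial_1^{+}-\partial_1^{-}$ of the iterated partial at $(\mathbf{x},y_j(\mathbf{x}))$ equals $\bigl[D^{-}\psi^{(d-2)}(u_j)-D^{+}\psi^{(d-2)}(u_j)\bigr]\prod_{i=1}^{d-1}\varphi'(x_i)$, which is nonzero by Lemma \ref{lem:equivalence_point_mass} and strict negativity of every $\varphi'(x_i)$. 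The main obstacle is executing this one-sided comparison rigorously through $d-2$ nested differentiations, carefully tracking signs and identifying each one-sided limit via Lemma \ref{lem:function_G} in analogy with the $d=3$ disintegration argument of Example \ref{ex:non_diff}.

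Finally, density of $\{u_j\}$ in $(S(\mathbf{x}),\infty)$ together with continuity and strict monotonicity of the map $u\mapsto\psi(u-S(\mathbf{x}))$ from $(S(\mathbf{x}),\infty)$ onto $(0,1)$ imply that $\{y_j(\mathbf{x}):u_j>S(\mathbf{x})\}$ is dense in $(0,1)$, hence in $\mathbb{I}$. Therefore $C\in\mathcal{C}_{ar,\mathcal{Q}}^d$. The extension to any permutation of the $d-1$ mixed partial derivatives is immediate from the exchangeability of $C_\psi(x_1,\ldots,x_d)=\psi\bigl(\sum_{i=1}^{d}\varphi(x_i)\bigr)$ in its coordinates.
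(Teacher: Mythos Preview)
Your proposal is correct and takes essentially the same approach as the paper: both construct a purely discrete Williamson measure with countable dense support, establish the closed form $\partial_2\cdots\partial_{d-1}C(\mathbf{x},y)=\psi^{(d-2)}\bigl(S(\mathbf{x})+\varphi(y)\bigr)\prod_{i=2}^{d-1}\varphi'(x_i)$, and then show that the one-sided $x_1$-derivatives differ at each $y_j(\mathbf{x})$ because $D^{-}\psi^{(d-2)}$ jumps at $u_j$. The only cosmetic difference is that the paper carries out this last step via the integral (disintegration) representation and explicit evaluation of $\lim_{h\downarrow 0}\tfrac{1}{h}\int_{[x_1,x_1\pm h]}\cdots$, whereas you invoke the one-sided chain rule directly.
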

\begin{proof}
We prove the result for $d \geq 3$, the case $d = 2$ can be proved similarly.
Suppose that $Q=\{q_1,q_2,...\} \subseteq (0,\infty)$ is dense in $[0,\infty)$, that 
$\alpha_1, \alpha_2, ... \in (0,1)$ fulfill $\sum_{i = 1}^\infty\alpha_i = 1$, and that 
$$
\gamma := \sum_{i \in \mathbb{N}}\alpha_i\delta_{q_i} \in \mathcal{P}_{\mathcal{W}_d}.
$$
holds. For ways to construct Williamson measures of this type see, e.g., \cite[Lemma Appendix B.2]{mult_arch}.
Letting $\psi$ denote the induced strict generator, applying Lemma \ref{lem:function_G} we have
\begin{align}\label{eq:left_hand_der}
\nonumber D^{-}\psi^{(d-2)}(z) &= (-1)^{d-1}(d-1)!\int_{[0,\frac{1}{z}]}t^{d-1} \mathrm{d}\gamma(t) \\&= (-1)^{d-1}(d-1)!\sum_{i \in \mathbb{N}}\alpha_iq_i^{d-1}\mathbf{1}_{(0,\frac{1}{z}]}(q_i)
\end{align}
for every $z > 0$. Moreover, applying change of coordinates and the fact that $(-1)^{d-2}\psi^{(d-2)}$ is convex 
together with \cite[Theorem 3.7.4]{kannan1996} and \cite[Appendix C]{Pollard} as well as using the explicit form of the marginal density given in equation \eqref{eq:density} yields that
\begin{align*}
\int_{[0,x_1]} &K_C(s_1,\mathbf{x}_{2:d-1},[0,y])c^{1:d-1}(s_{1},\mathbf{x}_{2:d-1})\mathrm{d}\lambda(s_{1}) \\&=
\prod_{j = 2}^{d-1}\varphi'(x_j)\int_{[0,x_1]} D^{-}\psi^{(d-2)}\left(\varphi(s_1) + \sum_{i=2}^{d-1}\varphi(x_i) + \varphi(y)\right)\varphi'(s_1)\mathrm{d}\lambda(s_{1}) \\&=
\prod_{j = 2}^{d-1}\varphi'(x_j)\psi^{(d-2)}\left(\sum_{i=1}^{d-1}\varphi(x_i) + \varphi(y)\right).
\end{align*}
Using disintegration and differentiability of $C$ (of order $d-2$) it follows that
$$
\partial_2...\partial_{d-1}C(\mathbf{x}_{1:d-1},y) = \int_{[0,x_1]}K_C(s,\mathbf{x}_{2:d-1},[0,y])c^{1:d-1}(s,\mathbf{x}_{2:d-1}) \mathrm{d}\lambda(s)
$$
holds for all $\mathbf{x} \in (0,1)^{d-1}$ and $y \in (0,1)$.\\
Fix $\mathbf{z} \in (0,1)^{d-1}$ with $z_{k} \in (0,1) \setminus \{\psi(\frac{1}{q_j}): j\in \mathbb{N}\}$ 
for every $k \in \{1,...,d-1\}$ and set $y_i = f^{\psi(\frac{1}{q_i})}(\mathbf{z})$ for all
$i \in \mathbb{N}$ with $\frac{1}{q_i}  > \sum_{k = 1}^{d-1}\varphi(z_{k})$. Notice that in this case
we have $\mathbf{z} \in [C^{1:d-1}]_{\psi\left(\frac{1}{q_i}\right)}$, so $y_i$ is well-defined.
Working with equation \eqref{eq:left_hand_der} yields
\begin{align*}
K_C(s,\mathbf{z}_{2:d-1}&,[0,y_i])c^{1:d-1}(s,\mathbf{z}_{2:d-1}) \\&= \prod_{k = 2}^{d-1}\varphi'(z_k)\varphi'(s)(-1)^{d-1}(d-1)!\sum_{j \in \mathbb{N}}\alpha_j q_j^{d-1}\mathbf{1}_{(0,\frac{1}{\varphi(s) + \sum_{k=2}^{d-1}\varphi(z_k) + \varphi(y_i)}]}(q_j) \\&= 
\underbrace{\prod_{k = 2}^{d-1}\varphi'(z_k)\varphi'(s)(-1)^{d-1}(d-1)!\sum_{j \neq i}\alpha_j q_j^{d-1}\mathbf{1}_{(0,\frac{1}{\varphi(s) + \sum_{k=2}^{d-1}\varphi(z_k) + \varphi(y_i)}]}(q_j)}_{I(s)} \\&\quad+ \underbrace{\prod_{k = 2}^{d-1}\varphi'(z_k)\varphi'(s)(-1)^{d-1}(d-1)!\,\alpha_i q_i^{d-1}\mathbf{1}_{(0,\frac{1}{\varphi(s) + \sum_{k=2}^{d-1}\varphi(z_k) + \varphi(y_i)}]}(q_i)}_{II(s)},
\end{align*}
for fixed $s \in (0,1)$. Since the mapping $i \mapsto q_i$ is injective, working with the fact that $\varphi$ is continuous and decreasing yields that the function $s \mapsto I(s)$ is continuous at $z_1$ and thus the function
$
t \mapsto \int_{[0,t]} I(s) \mathrm{d}\lambda(s)
$
is differentiable at $z_1$. \\
On the other hand, using the fact that $\varphi'$ is continuous and calculating the right-hand 
derivative of the function $t \mapsto \int_{[0,t]} II(s) \mathrm{d}\lambda(s)$ at $z_1$ 
we obtain 
\begin{align*}
    \lim_{h \downarrow 0}\frac{1}{h}\int_{[z_{1},z_{1}+h]}II(s)\mathrm{d}\lambda(s) &= 
    \prod_{k=2}^{d-1}\varphi'(z_k)(-1)^{d-1}(d-1)!\,\alpha_iq_i^{d-1}\cdot\\ & \quad \quad
     \lim_{h \downarrow 0}\frac{1}{h}\int_{[z_{1},z_{1}+h]}\varphi'(s) \mathbf{1}_{(0,\frac{1}{\varphi(s) - \varphi(z_{1}) + \frac{1}{q_i}}]}(q_i)\mathrm{d}\lambda(s) \\&=
    \prod_{k = 1}^{d-1}\varphi'(z_k)(-1)^{d-1}(d-1)!\,\alpha_i q_i^{d-1} > 0.
\end{align*}
Finally, proceeding analogously for the left-hand derivative yields
\begin{align*}
\lim_{h \downarrow 0}-\frac{1}{h}\int_{[z_{1}-h,z_{1}]}II(s)\mathrm{d}\lambda(s) &= 0.
\end{align*}
In other words, the function $s \mapsto \partial_2...\partial_{d-1}C(s,\mathbf{z}_{2:d-1},y_i)$ is not 
differentiable in $z_1$. Since the set $$\left\{f^{\psi(\frac{1}{q_j})}(\mathbf{z}) \colon \frac{1}{q_j} > \sum_{k = 1}^{d-1}\varphi(z_k), j\in\mathbb{N}\right\}$$
 obviously is dense in $\mathbb{I}$, the main result is proved. \\
The second assertion of the theorem follows by proceeding analogously and using Fubini's theorem for changing the 
order of integration/differentiation.  
\end{proof}
The next result shows that elements of $\mathcal{C}_{ar,\mathcal{Q}}^d$ can be found in every open ball of the 
compact metric space $(\mathcal{C}_{ar}^d,d_\infty)$.
\begin{Cor}
 The set $\mathcal{C}_{ar,\mathcal{Q}}^d$ is dense in $(\mathcal{C}_{ar}^d,d_\infty)$.
\end{Cor}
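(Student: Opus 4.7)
The plan is to invoke Theorem \ref{thm:archimedean_dense} to reduce the problem to a class of Archimedean copulas for which we can directly verify membership in $\mathcal{C}_{ar,\mathcal{Q}}^d$ via the construction already carried out in Theorem \ref{thm:Archimedean_non_diff}. More specifically, by part (ii) of Theorem \ref{thm:archimedean_dense}, the subfamily $\mathcal{D} := \{C\in \mathcal{C}_{ar,dis}^d\colon\mathrm{supp}(\mu_C)=\mathbb{I}^d\}$ is dense in $(\mathcal{C}_{ar}^d,d_\infty)$. Hence, to establish the corollary, it suffices to show the inclusion $\mathcal{D} \subseteq \mathcal{C}_{ar,\mathcal{Q}}^d$.

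To verify this inclusion, fix $C \in \mathcal{D}$ and let $\gamma \in \mathcal{P}_{\mathcal{W}_d}$ denote its associated Williamson measure. By Corollary \ref{cor:regularity_arch}(ii), $\gamma$ is purely discrete, and by Theorem \ref{thm:arch_full_support}, $\gamma$ has full support $[0,\infty)$. A purely discrete probability measure with full support $[0,\infty)$ is necessarily of the form $\gamma = \sum_{i \in \mathbb{N}} \alpha_i \delta_{q_i}$ with all $\alpha_i > 0$, $\sum_i \alpha_i = 1$, and $\{q_i\}_{i \in \mathbb{N}} \subseteq (0,\infty)$ dense in $[0,\infty)$ (the condition $\gamma(\{0\})=0$ rules out an atom at zero). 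In other words, $\gamma$ has exactly the structure of the Williamson measures considered in the construction underlying the proof of Theorem \ref{thm:Archimedean_non_diff}.

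Applying that proof verbatim then yields the existence of some $\Lambda \in \mathcal{B}(\mathbb{I}^{d-1})$ with $\lambda_{d-1}(\Lambda) = 1$ such that for every $\mathbf{x} \in \Lambda$ the set
$$
\bigg\{f^{\psi(1/q_j)}(\mathbf{x})\colon \tfrac{1}{q_j} > \sum_{k=1}^{d-1}\varphi(x_k),\; j \in \mathbb{N}\bigg\}
$$
is a dense subset of $\mathbb{I}$ on which the mixed derivative $\partial_1 \dots \partial_{d-1} C(\mathbf{x},\cdot)$ fails to exist. Hence $C \in \mathcal{C}_{ar,\mathcal{Q}}^d$, which proves $\mathcal{D} \subseteq \mathcal{C}_{ar,\mathcal{Q}}^d$ and therefore, by the denseness of $\mathcal{D}$, denseness of $\mathcal{C}_{ar,\mathcal{Q}}^d$ in $(\mathcal{C}_{ar}^d,d_\infty)$.

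The argument is essentially a bookkeeping reduction rather than a fresh construction — the only point requiring care is the translation from "discrete Williamson measure with full support in $[0,\infty)$" to the explicit atomic form with dense atoms required by Theorem \ref{thm:Archimedean_non_diff}, which is immediate once one recalls that $\gamma(\{0\}) = 0$ by definition of $\mathcal{P}_{\mathcal{W}_d}$. No further approximation step is needed, since the denseness of $\mathcal{D}$ is already provided by Theorem \ref{thm:archimedean_dense}.
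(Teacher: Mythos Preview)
Your proof is correct and follows essentially the same approach as the paper's: both reduce to the fact that discrete Williamson measures with full support are dense and then invoke the construction of Theorem \ref{thm:Archimedean_non_diff}. The only difference is cosmetic---the paper works directly at the Williamson-measure level via Lemma \ref{lem:dense_discrete_measures} and then applies the homeomorphism, whereas you first pass to the copula level through Theorem \ref{thm:archimedean_dense}(ii) and then translate back via Corollary \ref{cor:regularity_arch}(ii) and Theorem \ref{thm:arch_full_support}; the underlying content is identical.
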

\begin{proof}
Fix $C \in \mathcal{C}_{ar}^d$ and let $\gamma \in \mathcal{P}_{\mathcal{W}_d}$ be its corresponding 
Williamson measure. Then according to Lemma \ref{lem:dense_discrete_measures} there exists a sequence 
$(\gamma_n)_{n \in \mathbb{N}}$ of discrete Williamson measures with full support 
which converges weakly to $\gamma$. Using Theorem \ref{thm:Archimedean_non_diff} every $\gamma_n$ 
induces a copula $C_n \in \mathcal{C}_{ar,\mathcal{Q}}^d$, so applying \cite[Theorem 5.9]{mult_arch} 
or Lemma \ref{lem:hom_archimedean} yields the desired result.
\end{proof}
We will show in the next section that elements of $\mathcal{C}_{ar,\mathcal{Q}}^d$ (in fact even elements of 
$\mathcal{C}_{ar,p}^d$) are very atypical in the sense of Baire categories. 
Before focusing on topological sizes of subsets of $\mathcal{C}_{ar}^d$, however, we conclude this section 
by showing that $\mathcal{C}_{ar,p}^d$ exclusively consists of Archimedean copulas with 
non degenerated discrete component.     
\begin{theorem}\label{thm:equiv_discrete_path}
    For $C \in \mathcal{C}_{ar}^d$ the following two assertions are equivalent: 
    \begin{enumerate}
    \item $\mu_C^{dis}(\mathbb{I}^d) > 0$, i.e., $C$ has non degenerated discrete component.
    \item $C\in \mathcal{C}_{ar,p}^d$.
    \end{enumerate}
\end{theorem}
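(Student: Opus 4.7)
The plan is to exploit the explicit chain-rule expression for mixed partials of $C_\psi$ and link them directly to the behavior of $D^-\psi^{(d-2)}$. By Theorem \ref{thm:main_regularity_result} and Lemma \ref{lem:equivalence_point_mass}, the condition $\mu_C^{dis}(\mathbb{I}^d)>0$ is equivalent to $\gamma$ having a point mass at some $z_0>0$, which in turn is equivalent to $D^-\psi^{(d-2)}$ being discontinuous at $\varphi(t_0)=\tfrac{1}{z_0}$, where $t_0=\psi(\tfrac{1}{z_0})\in[0,1)$. Since $\psi^{(d-2)}$ exists and is continuous on $(0,\infty)$ by $d$-monotonicity, iterated application of the chain rule to $C(\mathbf{x},y)=\psi(\sum_{i=1}^{d-1}\varphi(x_i)+\varphi(y))$ yields, whenever $\varphi'$ exists at $x_2,\ldots,x_{d-1}$ and $u(\mathbf{x},y):=\sum_{i=1}^{d-1}\varphi(x_i)+\varphi(y)>0$, the identity
\[
\partial_{2}\cdots\partial_{d-1}C(\mathbf{x},y)=\Bigl(\prod_{k=2}^{d-1}\varphi'(x_k)\Bigr)\,\psi^{(d-2)}\bigl(u(\mathbf{x},y)\bigr).
\]

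For the backward implication I would argue by contraposition: if $\gamma$ has no point mass, then Lemma \ref{lem:function_G} shows that $D^-\psi^{(d-2)}$ is continuous on $(0,\infty)$. Since $(-1)^{d-2}\psi^{(d-2)}$ is convex, its right derivative satisfies $D^+\psi^{(d-2)}(x)=\lim_{y\downarrow x}D^-\psi^{(d-2)}(y)$, so continuity of $D^-\psi^{(d-2)}$ forces both one-sided derivatives to coincide everywhere, i.e.\ $\psi^{(d-1)}$ exists on all of $(0,\infty)$. A further application of the chain rule then produces
\[
\partial_{1}\partial_{2}\cdots\partial_{d-1}C(\mathbf{x},y)=\Bigl(\prod_{i=1}^{d-1}\varphi'(x_i)\Bigr)\,\psi^{(d-1)}\bigl(u(\mathbf{x},y)\bigr),
\]
which exists at every $(\mathbf{x},y)\in(0,1)^{d-1}\times(0,1)$ such that each $\varphi'(x_i)$ exists. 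Convexity of $\varphi$ shows that $\varphi'$ fails to exist only on a countable subset of $(0,1)$, and since $\mu_{C^{1:d-1}}$ is absolutely continuous by \cite[Proposition 4.1]{neslehova} the exceptional set has $\mu_{C^{1:d-1}}$-measure zero. Hence for $\mu_{C^{1:d-1}}$-a.e.\ $\mathbf{x}$ and every $y\in(0,1)$ the $(d-1)$-st mixed partial exists, so $C\notin\mathcal{C}_{ar,p}^d$.

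For the forward direction, assume $\gamma(\{z_0\})>0$ with $t_0:=\psi(\tfrac{1}{z_0})\in[0,1)$, so that $D^-\psi^{(d-2)}$ has a genuine jump at $\varphi(t_0)\in(0,\varphi(0)]$. Setting
\[
\Lambda:=\Bigl\{\mathbf{x}\in(0,1)^{d-1}:0<\textstyle\sum_{i=1}^{d-1}\varphi(x_i)<\varphi(t_0)\Bigr\},
\]
the marginal density $c^{1:d-1}(\mathbf{x})=\prod_{i}\varphi'(x_i)\cdot D^-\psi^{(d-2)}(\sum_i\varphi(x_i))$ is strictly positive on the non-empty open set $\Lambda$ (since $(-1)^{d-2}\psi^{(d-2)}$ is strictly decreasing on $(0,\varphi(0))$), hence $\mu_{C^{1:d-1}}(\Lambda)>0$. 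For $\mathbf{x}\in\Lambda$ at which each $\varphi'(x_i)$ exists I would take $y_\mathbf{x}:=f^{t_0}(\mathbf{x})\in(0,1)$, which by construction yields $u(\mathbf{x},y_\mathbf{x})=\varphi(t_0)$. Computing the one-sided $x_1$-derivatives of the first paragraph's formula and using that $\varphi$ is strictly decreasing (so that an increment $x_1\downarrow x_{1,0}$ forces $u\uparrow\varphi(t_0)$ from the left) gives
\[
\partial_{1}^{\pm}\bigl[\partial_{2}\cdots\partial_{d-1}C\bigr](\mathbf{x},y_\mathbf{x})=\Bigl(\prod_{k=1}^{d-1}\varphi'(x_k)\Bigr)\,D^{\mp}\psi^{(d-2)}\bigl(\varphi(t_0)\bigr),
\]
and the jump $D^-\psi^{(d-2)}(\varphi(t_0))\neq D^+\psi^{(d-2)}(\varphi(t_0))$ together with $\prod_k\varphi'(x_k)\neq 0$ forces the two one-sided limits to differ, placing $C$ in $\mathcal{C}_{ar,p}^d$. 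The main obstacle is executing this one-sided chain-rule computation rigorously at the kink of the outer function $\psi^{(d-2)}$ and simultaneously covering the strict case ($t_0>0$) and the non-strict borderline case ($t_0=0$, $\varphi(t_0)=\varphi(0)<\infty$) inside a single unified argument.
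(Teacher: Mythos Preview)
Your proposal is correct and follows essentially the same strategy as the paper: both directions hinge on the equivalence between atoms of $\gamma$, discontinuities of $D^-\psi^{(d-2)}$, and the failure of the one-sided $x_1$-derivatives of $\partial_2\cdots\partial_{d-1}C$ to match at the corresponding level function $y_\mathbf{x}=f^{t_0}(\mathbf{x})$.

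There are two tactical differences worth noting. For the implication $(2)\Rightarrow(1)$ the paper argues directly: from $\partial_1^+\neq\partial_1^-$ it reads off a jump of $D^-\psi^{(d-2)}$, which via the Markov-kernel formula forces $K_C(\mathbf{x},\{y_\mathbf{x}\})>0$ and hence $\mu_C^{dis}(\mathbb{I}^d)>0$ by disintegration. Your contrapositive route (no atoms $\Rightarrow$ $\psi^{(d-1)}$ exists everywhere $\Rightarrow$ the ordinary chain rule produces the full mixed partial for $\mu_{C^{1:d-1}}$-a.e.\ $\mathbf{x}$ and all $y$) is equally valid and arguably cleaner, since it sidesteps the kernel altogether. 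Second, where you appeal to a one-sided chain rule at the kink of $\psi^{(d-2)}$ (correctly, with the $D^{\mp}$ swap coming from the decreasingness of $\varphi$), the paper instead writes $\partial_2\cdots\partial_{d-1}C$ as an integral via disintegration and the marginal density, then differentiates the integral; this makes the one-sided limits fall out of left-continuity of $D^-\psi^{(d-2)}$ without having to justify a composite one-sided derivative. Your self-identified ``main obstacle'' is therefore exactly the point where the paper's integral representation buys extra rigor, but your direct computation can be made fully rigorous as well, and the non-strict $t_0=0$ case goes through since $\psi^{(d-2)}\equiv 0$ on $[\varphi(0),\infty)$.
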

\begin{proof}
We consider the case $d \geq 3$, the case $d = 2$ is technically simpler and can be proved analogously. \\
(i) Fix $C \in \mathcal{C}_{ar}^d$ and assume that $\mu_C^{dis}(\mathbb{I}^d) > 0$ holds. 
Corollary \ref{cor:point.mass} implies that $\gamma$ has a point mass at some $z_0 \in (0,\infty)$. 
Define $t_0 := \psi(\frac{1}{z_0})$, let $\mathbf{x} \in [C^{1:d-1}]_{t_0}$ be arbitrary but fixed, and set $y:=y_\mathbf{x} := f^{t_0}(\mathbf{x})$.
Then proceeding analogously as in the proof of Theorem \ref{thm:Archimedean_non_diff} and using left-continuity of $D^-\psi^{(d-2)}$ on the one hand yields 
\begin{align*}
\partial_1^+\partial_2...&\partial_{d-1}C(\mathbf{x},y) \\&= \prod_{i=2}^{d-1}\varphi'(x_i)\lim_{h \downarrow 0}\frac{1}{h}\int_{[x_1,x_1+h]}D^{-}\psi^{(d-2)}\left(\varphi(s_1) - \varphi(x_1) + \varphi(t_0)\right)\varphi'(s_1)\mathrm{d}\lambda(s_1) \\&=
\prod_{i=1}^{d-1}\varphi'(x_i)D^{-}\psi^{(d-2)}\left(\varphi(t_0)\right)
\end{align*}
and on the other hand we have
\begin{align*}
\partial_1^-\partial_2...\partial_{d-1}&C(\mathbf{x},y)=
\prod_{i=1}^{d-1}\varphi'(x_i)D^{-}\psi^{(d-2)}\left(\varphi(t_0)+\right).
\end{align*}
Lemma \ref{lem:equivalence_point_mass} implies that $\frac{1}{z_0}$ is a discontinuity 
point of $D^-\psi^{(d-2)}$, so 
$$\partial_1^+\partial_{2}...\partial_{d-1}C(\mathbf{x},y) \neq \partial_1^-\partial_{2}...\partial_{d-1}C(\mathbf{x},y)$$
and we have shown that the mapping $s \mapsto \partial_2...\partial_{d-1}C(s,\mathbf{x}_{2:d-1},y)$ is not 
differentiable at $x_1$.
 Using the fact that $\mu_{C^{1:d-1}}([C^{1:d-1}]_{t_0}) >0$ we get $C \in \mathcal{C}_{ar,p}^d$,
which completes the proof of the first implication. \\
(ii) To show the other direction we proceed as follows. If $C \in \mathcal{C}_{ar,p}^d$ then there exists 
some set $\Lambda \in \mathcal{B}(\mathbb{I}^{d-1})$ with $\mu_{C^{1:d-1}}(\Lambda) > 0$ such that for 
every point $\mathbf{x} \in \Lambda$ there exists some $y_\mathbf{x} \in (0,1)$ with $\partial_1^+\partial_2...\partial_{d-1}C(\mathbf{x},y_\mathbf{x}) \neq \partial_1^-\partial_2...\partial_{d-1}C(\mathbf{x},y_\mathbf{x})$. Explicitly calculating these derivatives as before it follows that 
$$
D^{-}\psi^{(d-2)}\left(\sum_{i=1}^{d-1}\varphi(x_i) + \varphi(y_\mathbf{x})\right) \neq 
D^{-}\psi^{(d-2)}\left(\sum_{i=1}^{d-1}\varphi(x_i) + \varphi(y_\mathbf{x})+\right),
$$ 
which, using equation \eqref{eq:markov_kernel_arch} yields $K_C(\mathbf{x},\{y_\mathbf{x}\}) > 0$. 
Applying disintegration and equation \eqref{eq:def_abs_dis_sing_copula} the property $\mu_C^{dis}(\mathbb{I}^d) > 0$ follows.
\end{proof}
\section{Baire category results for multivariate Archimedean copulas}\label{section:category}
Building upon the previous sections on mass distributions and derivatives we now study, which regularity 
properties topologically typical Archimedean copulas exhibit. Doing so, we first 
derive Baire category results in the family $(\mathcal{P}_{\mathcal{W}_d},\tau_w)$ of all $d$-Williamson measures 
and then translate those results to $\mathcal{C}_{ar}^d$ by using the fact that homeomorphisms preserve 
Baire categories (see, e.g., \cite[Lemma A1]{dietrich2024}). \\
Recall that the space of all Archimedean copulas $(\mathcal{C}_{ar}^d,d_\infty)$ is not complete, so it is 
a priori not clear, that $\mathcal{C}_{ar}^d$ is not of first Baire category in itself. 
The subsequent auxiliary lemmas provide the basis for the Baire category results to follow. 
As mentioned in Section \ref{section:arch_cop}, the family $\mathcal{P}_{nor}^d$ endowed with $\tau_w$
is complete and hence closed. 
\begin{Lemma}
The family of all $d$-Williamson-measures $\mathcal{P}_{\mathcal{W}_d}$ is dense in $(\mathcal{P}_{nor}^d,\tau_w)$.
\end{Lemma}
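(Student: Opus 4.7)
The plan is to exploit the fact that $\mathcal{P}_{nor}^d$ and $\mathcal{P}_{\mathcal{W}_d}$ differ only in the single constraint $\gamma(\{0\}) = 0$, and to approximate any offending mass at the origin by shifting it slightly to the right while correcting the resulting defect in the normalization $\int_{\mathbb{I}}(1-t)^{d-1}\,\mathrm{d}\gamma = \tfrac12$.

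First I would fix $\gamma \in \mathcal{P}_{nor}^d$; if $\gamma(\{0\})=0$, then $\gamma \in \mathcal{P}_{\mathcal{W}_d}$ and the constant sequence suffices. Otherwise, pick $\epsilon_n \downarrow 0$ and let $\gamma_n^\ast$ be the push-forward of $\gamma$ under the translation $T_n(x) := x+\epsilon_n$. Then $\gamma_n^\ast(\{0\}) = 0$ and $\gamma_n^\ast \to \gamma$ weakly (translations by vanishing amounts are well known to yield weak convergence). A direct change of variables gives
$$
I_n := \int_{[0,1]}(1-t)^{d-1}\,\mathrm{d}\gamma_n^\ast(t) = \int_{[0,1-\epsilon_n]}(1-x-\epsilon_n)^{d-1}\,\mathrm{d}\gamma(x),
$$
so pointwise the integrand is dominated by $(1-x)^{d-1}$. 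Hence $I_n \leq \tfrac12$, and bounded convergence yields $I_n \to \int_{[0,1]}(1-x)^{d-1}\,\mathrm{d}\gamma(x) = \tfrac12$ (the boundary point $x=1$ causing no difficulty because $(1-x)^{d-1}$ vanishes there).

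Second, I would correct the normalization by mixing $\gamma_n^\ast$ with a fixed auxiliary point mass. Choose once and for all some $\epsilon_0 \in (0,1)$ with $J := (1-\epsilon_0)^{d-1} > \tfrac12$, and set
$$
\gamma_n := (1-a_n)\,\gamma_n^\ast + a_n\,\delta_{\epsilon_0}, \qquad a_n := \frac{\tfrac12 - I_n}{J - I_n}.
$$
Since $I_n \leq \tfrac12 < J$, we have $a_n \in [0,1]$, total mass is preserved, $\gamma_n(\{0\}) = 0$, and by construction the normalization $\int_{\mathbb{I}}(1-t)^{d-1}\,\mathrm{d}\gamma_n = \tfrac12$ is satisfied, so $\gamma_n \in \mathcal{P}_{\mathcal{W}_d}$.

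Third, I would verify weak convergence $\gamma_n \to \gamma$: since $I_n \to \tfrac12$ and $J - I_n$ is bounded away from $0$, we have $a_n \to 0$, so for every bounded continuous $f$,
$$
\int f\,\mathrm{d}\gamma_n = (1-a_n)\int f\,\mathrm{d}\gamma_n^\ast + a_n f(\epsilon_0) \;\longrightarrow\; \int f\,\mathrm{d}\gamma.
$$
The main (minor) obstacle is handling the contribution of mass of $\gamma$ near $t=1$ under the translation; it could in principle pose a discontinuity issue for the normalization integral, but since the weight $(1-t)^{d-1}$ vanishes at $t=1$, this danger is neutralized by bounded convergence, making the rest of the argument routine.
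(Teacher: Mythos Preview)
Your proof is correct and is in fact more streamlined than the paper's. The paper proceeds by a case distinction on whether $\gamma(\{0\})<\tfrac12$ or $\gamma(\{0\})=\tfrac12$: in the first case it pushes only the mass of $\gamma$ on $\mathbb{I}$ through affine maps $T_{a_n,b_n}(t)=a_n+(b_n-a_n)t$, choosing $b_n$ implicitly via an intermediate-value argument so that the normalization is preserved exactly at each step; in the boundary case $\gamma(\{0\})=\tfrac12$ it resorts to an ad hoc construction using the quasi-inverse $F_\gamma^-$ of the distribution function. Your argument bypasses both complications: translating all of $\gamma$ by a vanishing amount $\epsilon_n$ makes the normalization defect $\tfrac12-I_n\ge 0$ explicit, and mixing with a single fixed point mass $\delta_{\epsilon_0}$ (with $(1-\epsilon_0)^{d-1}>\tfrac12$) corrects the defect by a coefficient $a_n\to 0$. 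This works uniformly in $\gamma(\{0\})\in(0,\tfrac12]$ and is more elementary; the paper's construction, on the other hand, has the cosmetic advantage that in case~(i) the approximating measures agree with $\gamma$ on $(1,\infty)$ and introduce no extraneous atom, but this plays no role in the lemma or its subsequent use.
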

\begin{proof}
    First of all notice that every $\gamma \in \mathcal{P}_{nor}^d$ fulfills $\gamma(\{0\}) \leq \frac{1}{2}$.
    Since for $\gamma(\{0\})=0$ we have $\gamma \in \mathcal{P}_{\mathcal{W}_d}$ it suffices 
    to consider the case $\gamma(\{0\})   \in (0,\frac{1}{2}]$ and to show that there exists 
    some sequence in $\mathcal{P}_{\mathcal{W}_d}$ converging weakly to $\gamma$. \\
    (i) Suppose that $0<\gamma(\{0\}) < \frac{1}{2}$ holds. Then we have $\gamma(\mathbb{I}) > \frac{1}{2}$ 
    and may proceed as follows: For $a \in [0,\frac{1}{4}]$ and $b \in [a,1]$ 
    define the transformation $T_{a,b} \colon \mathbb{I} \rightarrow [a,b]$ by $T_{a,b}(t) := a + (b-a)t$.
    Choose $n_0 \in \mathbb{N}$ sufficiently large so that $\gamma(\mathbb{I})(1-\frac{1}{n_0})^{d-1} > \frac{1}{2}$ 
    holds and set $a_n := \frac{1}{n}$ for every $n \geq n_0$. 
    Defining the function $\Phi_n: [a_n,1] \rightarrow [0,\infty)$ by 
    $$
    \Phi_n(s) := \int_\mathbb{I}(1-a_n - (s-a_n)t)^{d-1} \mathrm{d} \gamma(t) 
    = \int_{[a_n,s]}(1-t)^{d-1} \mathrm{d} \gamma^{T_{a_n,s}}(t)
    $$ 
    it follows that $\Phi_n$ is strictly decreasing and continuous on $[a_n,1]$ and that 
    $\Phi_n(a_n) \geq \frac{1}{2}$ as well as $\Phi_n(1) \leq \frac{1}{2}$ hold. 
    Hence there exists a unique $b_n \in [a_n,1]$ fulfilling $\Phi(b_n) = \frac{1}{2}$. 
    Letting $\xi_n$ denote the unique probability measure on $\mathcal{B}([0,\infty))$ 
    fulfilling $\xi_n([0,a_n)) = 0 = \xi_n((b_n,1])$, 
    coinciding with $\gamma^{T_{a_n,b_n}}$ on $\mathcal{B}([a_n,b_n])$ and with $\gamma$ on $\mathcal{B}((1,\infty))$
    we obviously have $\xi_n \in \mathcal{P}_{\mathcal{W}_d}$ for every $n \geq n_0$. 
    Considering $a_n \overset{n\rightarrow \infty}{\longrightarrow} 0$ it is straightforward to verify
     that $b_n \overset{n\rightarrow \infty}{\longrightarrow} 1$. 
     Hence, for every bounded continuous function $f \colon [0,\infty) \rightarrow \mathbb{R}$ 
     using change of coordinates it follows that   
    \begin{align*}
       \int_{[0,\infty)} f(t) \mathrm{d}\xi_n(t) &=   \int_{[0,a_n) \cup (b_n,1]} f(t) \mathrm{d}\xi_n(t) 
           +  \int_{[a_n,b_n]} f(t) \mathrm{d}\xi_n(t) +  \int_{(1,\infty)} f(t) \mathrm{d}\xi_n(t) \\
           &= 0 + \int_{[a_n,b_n]} f(t) \mathrm{d}\gamma^{T_{a_n,b_n}}(t) +  \int_{(1,\infty)} f(t) \mathrm{d}\xi_n(t) \\
           &=  \int_{\mathbb{I}} f \circ T_{a_n,b_n}(t) \mathrm{d}\gamma(t) +  \int_{(1,\infty)} f(t) \mathrm{d}\gamma(t)
    \end{align*}    
    and the latter sum converges to  $\int_{[0,\infty)} f(t) \mathrm{d}\gamma(t)$.
    In other words: the sequence $(\xi_n)_{n \in \mathbb{N}}$ converges weakly to $\gamma$. \\
    (ii) Suppose that $\gamma(\{0\}) = \frac{1}{2}$ holds, in which case we have $\gamma(\mathbb{I}) = \frac{1}{2}$.
    Letting $F_\gamma$ denote the distribution function of $\gamma$ and $F_\gamma^-$ its quasi-inverse 
    then we obviously have $F_\gamma^{-}(\frac{1}{2}+\frac{1}{n}) > 1$. 
    For every $n \geq 3$ define another distribution function $F_{\zeta_n}$ by
    $$
    F_{\zeta_n}(z) := \left(\frac{1}{2}+\frac{1}{n}\right)\mathbf{1}_{[1-\sqrt[d-1]{\frac{n}{n+2}},F_\gamma^{-}(\frac{1}{2}+\frac{1}{n}))}(z) + F_{\gamma}(z)\mathbf{1}_{[F_\gamma^{-}(\frac{1}{2}+\frac{1}{n}),\infty)}(z)
    $$
    for every $z \in [0,\infty)$. Then obviously the probability measures $\zeta_n$ induced by $F_{\zeta_n}$ is
    a $d$-Williamson measure for every $n \geq 3$. 
    If $z \in (0,\infty)$ fulfills $F_\gamma(z) = \frac{1}{2}$, then 
    $z < F_\gamma^{-}(\frac{1}{2} + \frac{1}{n})$, hence, using the facts that $\sqrt[d-1]{\frac{n}{n+2}} \overset{n \rightarrow \infty}{\longrightarrow} 1$ and $\frac{1}{2}+\frac{1}{n} \overset{n \rightarrow \infty}{\longrightarrow} \frac{1}{2}$ 
    shows convergence of $(F_{\zeta_n}(z))_{n \in \mathbb{N}}$ to $F_\gamma(z)$.  
    If $z \in (0,\infty)$ fulfills $F_\gamma(z) > \frac{1}{2}$, then we may 
    choose $K \in \mathbb{N}$ sufficiently large so that $\frac{1}{2} + \frac{1}{K} \leq F_\gamma(z)$ and 
    $F_\gamma^-(\frac{1}{2}+\frac{1}{K}) \leq z$ hold. 
    Then $F_{\zeta_n}(z) = F_\gamma(z)$ for all $n \geq K$, which completes the proof.  
\end{proof}
Building upon the previous lemma we prove that a typical element of $\mathcal{P}_{nor}^d$ is a Williamson measure.
\begin{Lemma}\label{lem:williamson_co_meager}
The family of $d$-Williamson-measures $\mathcal{P}_{\mathcal{W}_d}$ is co-meager in $(\mathcal{P}_{nor}^d,\tau_w)$.
\end{Lemma}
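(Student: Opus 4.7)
My plan is to exhibit the complement $\mathcal{P}_{nor}^d \setminus \mathcal{P}_{\mathcal{W}_d}$ as a countable union of closed nowhere dense sets. Since $(\mathcal{P}_{nor}^d,\tau_w)$ is a complete (hence Baire) metric space by the discussion in Section \ref{section:arch_cop}, this will establish that $\mathcal{P}_{\mathcal{W}_d}$ is co-meager.

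Concretely, note that $\mathcal{P}_{nor}^d \setminus \mathcal{P}_{\mathcal{W}_d} = \{\gamma \in \mathcal{P}_{nor}^d: \gamma(\{0\})>0\}$ and decompose
\[
\mathcal{P}_{nor}^d \setminus \mathcal{P}_{\mathcal{W}_d} = \bigcup_{n \in \mathbb{N}} A_n, \qquad
A_n := \left\{\gamma \in \mathcal{P}_{nor}^d: \gamma(\{0\}) \geq \tfrac{1}{n}\right\}.
\]
First, I would verify that every $A_n$ is $\tau_w$-closed. This is a direct consequence of the Portmanteau theorem: since $\{0\}$ is closed in $[0,\infty)$, the map $\gamma \mapsto \gamma(\{0\})$ is upper semi-continuous with respect to weak convergence, so any weak limit of a sequence in $A_n$ still satisfies $\gamma(\{0\}) \geq 1/n$.

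Second, I would show that each $A_n$ has empty interior in $(\mathcal{P}_{nor}^d,\tau_w)$. Fix $\gamma \in A_n$ and an arbitrary open neighborhood $U \subseteq \mathcal{P}_{nor}^d$ of $\gamma$. By the preceding lemma, $\mathcal{P}_{\mathcal{W}_d}$ is dense in $(\mathcal{P}_{nor}^d,\tau_w)$, so there exists some $\gamma' \in U \cap \mathcal{P}_{\mathcal{W}_d}$. By definition of $\mathcal{P}_{\mathcal{W}_d}$ we have $\gamma'(\{0\}) = 0 < 1/n$, i.e.\ $\gamma' \notin A_n$. Hence $U \not\subseteq A_n$, which implies that the interior of $A_n$ is empty. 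Being closed with empty interior, each $A_n$ is nowhere dense.

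Consequently, $\mathcal{P}_{nor}^d \setminus \mathcal{P}_{\mathcal{W}_d}$ is a countable union of nowhere dense sets, hence meager, so $\mathcal{P}_{\mathcal{W}_d}$ is co-meager in $(\mathcal{P}_{nor}^d,\tau_w)$. I do not anticipate a genuine obstacle here; the only subtlety worth flagging is that closedness of $A_n$ relies on working with the closed set $\{0\}$ (and not merely on weak convergence of $\gamma_k(\{0\})$, which need not hold), and that the completeness of $\mathcal{P}_{nor}^d$ stated in Section \ref{section:arch_cop} is what allows the Baire category conclusion to be drawn inside the ambient space $\mathcal{P}_{nor}^d$ itself.
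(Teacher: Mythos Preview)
Your proof is correct and follows essentially the same approach as the paper: both decompose the complement as $\bigcup_n \{\gamma \in \mathcal{P}_{nor}^d : \gamma(\{0\}) \geq 1/n\}$, use Portmanteau to get closedness, and invoke denseness of $\mathcal{P}_{\mathcal{W}_d}$ from the preceding lemma to conclude each piece is nowhere dense.
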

\begin{proof}
    It suffices to show that the set
    \begin{equation}\label{eq:positive_in_zero}
    N:=\{\gamma \in \mathcal{P}_{nor}^d \colon \gamma(\{0\}) >0\}
    \end{equation}
    is of first Baire category in $\mathcal{P}_{nor}^d$, which can be done as follows. 
    For every $k \geq 2$ defining the set $\mathcal{P}_k$ by
    $$
    \mathcal{P}_k := \left\{\gamma \in \mathcal{P}_{nor}^d \colon \gamma(\{0\}) \geq \frac{1}{k}\right\}
    $$
    we obviously have $N \subseteq \bigcup_{k=2}^\infty \mathcal{P}_k$. 
    Portmanteau's theorem implies that for every sequence $\gamma_1,\gamma_2, ... \in \mathcal{P}_k$ 
    converging weakly to some $\gamma \in \mathcal{P}_{nor}^d$ we have  that
    $\gamma(\{0\}) \geq \limsup_{n \rightarrow \infty} \gamma_n(\{0\}) \geq \frac{1}{k}$ holds, so 
    $\mathcal{P}_k$ is weakly closed in $\mathcal{P}_{nor}^d$. 
    Letting $\mathcal{O} \subseteq \mathcal{P}_{nor}^d$ denote a non-empty open set 
    then denseness of $\mathcal{P}_{\mathcal{W}_d}$ in $\mathcal{P}_{nor}^d$ yields the existence of a measure 
    $\tilde{\gamma} \in \mathcal{O} \cap \mathcal{P}_{\mathcal{W}_d}$ with $\tilde{\gamma} \notin \mathcal{P}_k$.
    This shows that $\mathcal{P}_k$ is nowhere dense in $\mathcal{P}_{nor}^d$, so the set $N$ is of first 
    Baire category, implying that $\mathcal{P}_{\mathcal{W}_d}$ is co-meager in $\mathcal{P}_{nor}^d$.
\end{proof}
In the following we extend the result from \cite{cat_exchange}, stating that a typical Archimedean copula is strict, 
to arbitrary dimensions $d \geq 2$. 
Building upon the tools developed in the previous sections and in \cite{mult_arch} the proof provided below 
is much simpler than the one for the bivariate setting established in \cite{cat_exchange}. 
In fact, we are even able to prove a stronger result: A typical multivariate Archimedian copula has full support.
As before we work in $\mathcal{P}_{\mathcal{W}_d}$ and then translate to $\mathcal{C}_{ar}^d$. 
\begin{Lemma}\label{thm:ful_sup_wil_meas_co-meager}
The set $\mathcal{P}_{\mathcal{W}_d}^{fs}$ is co-meager in $(\mathcal{P}_{\mathcal{W}_d},\tau_w)$.
\end{Lemma}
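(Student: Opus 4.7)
The plan is to prove that the complement $\mathcal{P}_{\mathcal{W}_d} \setminus \mathcal{P}_{\mathcal{W}_d}^{fs}$ is meager by writing it as a countable union of closed nowhere dense subsets, mimicking the structural recipe employed in the proof of Lemma \ref{lem:williamson_co_meager}. Since $[0,\infty)$ is second countable, a measure $\gamma \in \mathcal{P}_{\mathcal{W}_d}$ fails to have full support if, and only if there exists a pair of rationals $0 \leq a < b < \infty$ with $\gamma\bigl((a,b)\bigr) = 0$ (any non-empty open set of $\gamma$-measure zero either contains a bounded rational sub-interval with positive endpoints, or, if it contains $0$, an interval of the form $(0,b')$ with $b' \in \mathbb{Q}_{>0}$; recall $\gamma(\{0\}) = 0$). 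Setting
\[
\mathcal{N}_{a,b} := \bigl\{\gamma \in \mathcal{P}_{\mathcal{W}_d}\colon \gamma\bigl((a,b)\bigr) = 0\bigr\},
\]
we therefore obtain the decomposition $\mathcal{P}_{\mathcal{W}_d} \setminus \mathcal{P}_{\mathcal{W}_d}^{fs} = \bigcup_{a,b \in \mathbb{Q}, 0 \leq a < b} \mathcal{N}_{a,b}$, which is a countable union.

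The first step is to show that each $\mathcal{N}_{a,b}$ is weakly closed in $\mathcal{P}_{\mathcal{W}_d}$. By Portmanteau's theorem, if $(\gamma_k)_{k \in \mathbb{N}} \subseteq \mathcal{N}_{a,b}$ converges weakly to $\gamma \in \mathcal{P}_{\mathcal{W}_d}$ then, since $(a,b)$ is open, one has $\gamma\bigl((a,b)\bigr) \leq \liminf_{k \to \infty} \gamma_k\bigl((a,b)\bigr) = 0$, so $\gamma \in \mathcal{N}_{a,b}$. The second step is to verify that each $\mathcal{N}_{a,b}$ has empty interior. For this, invoke Lemma \ref{lem:dense_discrete_measures}, according to which the family of Williamson measures with full support $[0,\infty)$ is dense in $(\mathcal{P}_{\mathcal{W}_d},\tau_w)$. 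Any such full-support measure assigns strictly positive mass to every non-empty open interval and hence lies outside $\mathcal{N}_{a,b}$, so every non-empty $\tau_w$-open subset of $\mathcal{P}_{\mathcal{W}_d}$ contains an element of $\mathcal{P}_{\mathcal{W}_d} \setminus \mathcal{N}_{a,b}$. Combined with closedness this gives nowhere-density, and a countable union of nowhere dense sets is meager, completing the proof.

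The main obstacle is the denseness input in step two. If Lemma \ref{lem:dense_discrete_measures} were unavailable, a self-contained perturbation argument would be required, and the non-trivial part is preserving the normalization $\int (1-t)^{d-1}\mathrm{d}\gamma = \tfrac{1}{2}$. Concretely, given $\gamma \in \mathcal{N}_{a,b}$ and $\varepsilon > 0$, one would choose $t_1 \in (a,b) \cap (0,\infty)$ together with some $t_2 \in (0,\infty)$ lying on the opposite side of the threshold $1 - 2^{-1/(d-1)}$, so that $(1-t_1)_+^{d-1}$ and $(1-t_2)_+^{d-1}$ bracket $\tfrac{1}{2}$, and then select $\alpha \in (0,1)$ with $\alpha(1-t_1)_+^{d-1} + (1-\alpha)(1-t_2)_+^{d-1} = \tfrac{1}{2}$. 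The perturbed measure $\gamma_\varepsilon := (1-\varepsilon)\gamma + \varepsilon\bigl(\alpha\delta_{t_1} + (1-\alpha)\delta_{t_2}\bigr)$ then lies in $\mathcal{P}_{\mathcal{W}_d}$, assigns mass at least $\varepsilon\alpha$ to $(a,b)$, and converges weakly to $\gamma$ as $\varepsilon \downarrow 0$.
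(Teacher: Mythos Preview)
Your proof is correct and follows essentially the same approach as the paper: both write the complement of $\mathcal{P}_{\mathcal{W}_d}^{fs}$ as a countable union of sets of the form $\{\gamma:\gamma(I)=0\}$ for open intervals $I$ (the paper indexes these by rational centers $q_i$ and radii $\tfrac{1}{k}$, you by rational endpoints $a<b$), establish closedness via Portmanteau, and invoke Lemma~\ref{lem:dense_discrete_measures} for nowhere-density. Your additional self-contained perturbation argument is a nice bonus not present in the paper.
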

\begin{proof}
It suffices to show that  
$$
\mathcal{W} := \{\gamma \in \mathcal{P}_{\mathcal{W}_d} \colon \text{there exists an interval }(a,b)\subseteq [0,\infty) \text{ with } \gamma((a,b)) = 0\}
$$
is of first Baire category in $(\mathcal{P}_{\mathcal{W}_d},\tau_w)$. 
Let $q_1,q_2,\ldots$ be an enumeration of $(0,\infty) \cap \mathbb{Q}$, set $q_0 := 0$, and, 
for $(i,k) \in \mathbb{N}_0 \times \mathbb{N}$ define the sets $\mathcal{W}_{i,k}$ by
\begin{align*}
\mathcal{W}_{i,k} := \begin{cases}
        \{\gamma \in \mathcal{P}_{\mathcal{W}_d}\colon \gamma((q_i - \tfrac{1}{k}, q_i + \tfrac{1}{k})) = 0\} \text{ if }
             i \geq 1, \\
        \{\gamma \in \mathcal{P}_{\mathcal{W}_d}\colon \gamma((0, \tfrac{1}{k})) = 0\} \text{ if } i=0.
    \end{cases}
\end{align*}
Then we obviously have $\mathcal{W} \subseteq \bigcup_{i \in \mathbb{N}_0} \bigcup_{k \in \mathbb{N}}\mathcal{W}_{i,k}$. We show that for $(i,k) \in \mathbb{N}^2$ the set $\mathcal{W}_{i,k}$ is weakly closed. 
If $(\gamma_n)_{n \in \mathbb{N}}$ is a sequence in $\mathcal{W}_{i,k}$ converging weakly to some 
$\gamma \in \mathcal{P}_{\mathcal{W}_d}$, then Portmanteau's theorem shows that
$$
0 = \liminf_{\ell \rightarrow \infty}\gamma_\ell((q_i - \tfrac{1}{k}, q_i + \tfrac{1}{k})) \geq \gamma((q_i - \tfrac{1}{k}, q_i + \tfrac{1}{k})) \geq 0,
$$
so $\gamma \in \mathcal{W}_{i,k}$. Proceeding analogously yields that the sets $\mathcal{W}_{0,k}$ are 
weakly closed too. 
Considering that, according to Lemma \ref{lem:dense_discrete_measures}, the set of Williamson measures with full 
support is dense in $\mathcal{P}_{\mathcal{W}_d}$ it follows that each set $\mathcal{W}_{i,k}$ is nowhere dense 
in $\mathcal{P}_{\mathcal{W}_d}$, which implies that $\mathcal{W}$ is of first Baire category. 
Hence, by definition $\mathcal{P}_{\mathcal{W}_d}^{fs}$ is co-meager in $(\mathcal{P}_{\mathcal{W}_d},\tau_w)$.
\end{proof}
Translating from $ \mathcal{P}_{\mathcal{W}_d}$ to $\mathcal{C}_{ar}^d$ yields that a topologically 
typical Archimedean copula has full support - the following result holds: 
\begin{theorem}\label{thm:typical_full_supp}
The set $\{C \in \mathcal{C}_{ar}^d \colon \mathrm{supp}(\mu_{C}) = \mathbb{I}^d\}$ is co-meager in $(\mathcal{C}_{ar}^d,d_\infty)$.
\end{theorem}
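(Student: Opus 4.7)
The plan is to transfer the co-meager statement about Williamson measures (Lemma \ref{thm:ful_sup_wil_meas_co-meager}) directly to $\mathcal{C}_{ar}^d$ via the homeomorphism linking the two spaces. Since homeomorphisms preserve Baire categories (as noted earlier via \cite[Lemma A1]{dietrich2024}), it suffices to identify the preimage of the target set under a suitable homeomorphism and show that preimage is co-meager.

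First I would invoke Lemma \ref{lem:hom_archimedean}, which yields that the composition $\xi_d \circ \mathcal{W}_d \colon (\mathcal{P}_{\mathcal{W}_d},\tau_w) \to (\mathcal{C}_{ar}^d, d_\infty)$ is a homeomorphism. Next, by Theorem \ref{thm:arch_full_support}, for every $\gamma \in \mathcal{P}_{\mathcal{W}_d}$ one has that $\gamma$ has full support $[0,\infty)$ if, and only if the associated Archimedean copula $C=(\xi_d\circ\mathcal{W}_d)(\gamma)$ has full support $\mathbb{I}^d$. Consequently
\[
(\xi_d\circ\mathcal{W}_d)(\mathcal{P}_{\mathcal{W}_d}^{fs}) = \{C \in \mathcal{C}_{ar}^d \colon \mathrm{supp}(\mu_C) = \mathbb{I}^d\}.
\]

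To finish, I would apply Lemma \ref{thm:ful_sup_wil_meas_co-meager}, which states that $\mathcal{P}_{\mathcal{W}_d}^{fs}$ is co-meager in $(\mathcal{P}_{\mathcal{W}_d},\tau_w)$, and then use the fact that homeomorphisms map co-meager sets to co-meager sets (equivalently, they map nowhere dense sets to nowhere dense sets, hence first-category sets to first-category sets, so complements of first-category sets are preserved). This immediately yields that $\{C \in \mathcal{C}_{ar}^d \colon \mathrm{supp}(\mu_C) = \mathbb{I}^d\}$ is co-meager in $(\mathcal{C}_{ar}^d, d_\infty)$.

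There is no real obstacle here beyond bookkeeping; essentially all the work has already been carried out in the preceding lemmas and theorems. The only subtle point worth flagging is that $(\mathcal{C}_{ar}^d, d_\infty)$ is not itself complete, so one should be careful to interpret ``co-meager'' intrinsically (as the complement of a meager set in $\mathcal{C}_{ar}^d$) rather than via the Baire category theorem; however, since the Baire category notions of meager, co-meager, and nowhere dense are invariant under homeomorphism independently of completeness, the transfer via $\xi_d\circ\mathcal{W}_d$ still goes through cleanly.
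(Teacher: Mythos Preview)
Your proposal is correct and matches the paper's own proof essentially verbatim: the paper simply says ``Immediate consequence of Lemma \ref{lem:hom_archimedean}, Theorem \ref{thm:arch_full_support} and Lemma \ref{thm:ful_sup_wil_meas_co-meager}.'' Your additional remark about interpreting co-meagerness intrinsically (since $\mathcal{C}_{ar}^d$ is not complete) is a helpful clarification that the paper leaves implicit.
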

\begin{proof}
    Immediate consequence of Lemma \ref{lem:hom_archimedean}, Theorem \ref{thm:arch_full_support} and Lemma \ref{thm:ful_sup_wil_meas_co-meager}.
\end{proof}
The subsequent corollary is now an immediate consequence: 
\begin{Cor}\label{cor:strict_wil_meas_co-meager}
The set $\mathcal{P}_{\mathcal{W}_d}^s$ is co-meager in $\mathcal{P}_{\mathcal{W}_d}$ w.r.t. the weak topology.
Moreover, $\mathcal{C}_{ar,s}^d$ is co-meager in $(\mathcal{C}_{ar}^d,d_\infty)$.
\end{Cor}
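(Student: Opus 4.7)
The plan is to deduce both assertions directly from the already-established results of this section, with essentially no new work needed. The first assertion follows from a trivial set-theoretic inclusion combined with Lemma \ref{thm:ful_sup_wil_meas_co-meager}, and the second from either Theorem \ref{thm:typical_full_supp} or the homeomorphism of Lemma \ref{lem:hom_archimedean}.

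First I would observe the set-theoretic inclusion $\mathcal{P}_{\mathcal{W}_d}^{fs} \subseteq \mathcal{P}_{\mathcal{W}_d}^s$: if $\gamma \in \mathcal{P}_{\mathcal{W}_d}^{fs}$ has full support $[0,\infty)$, then every non-empty open subset of $[0,\infty)$ has strictly positive $\gamma$-measure, so in particular $\gamma((0,r))>0$ for every $r>0$. Since Lemma \ref{thm:ful_sup_wil_meas_co-meager} states that $\mathcal{P}_{\mathcal{W}_d}^{fs}$ is co-meager in $(\mathcal{P}_{\mathcal{W}_d},\tau_w)$ and any superset of a co-meager set is co-meager, the first assertion follows at once.

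For the second assertion I would use Theorem \ref{thm:typical_full_supp} and the simple observation that full support forces strictness. Indeed, if $C \in \mathcal{C}_{ar}^d$ were non-strict, then $\varphi(0)<\infty$, and from the definition of $L_0$ given in Section \ref{section:arch_cop} the set $L_0$ has non-empty interior, while $\mu_C(L_0)=\mu_C(\Gamma(f^0))$ is concentrated on the graph of $f^0$ (which has zero Lebesgue volume in $\mathbb{I}^d$). In particular $\mathrm{supp}(\mu_C)\neq \mathbb{I}^d$. Hence $\{C \in \mathcal{C}_{ar}^d : \mathrm{supp}(\mu_C)=\mathbb{I}^d\} \subseteq \mathcal{C}_{ar,s}^d$, and Theorem \ref{thm:typical_full_supp} immediately yields co-meagerness of $\mathcal{C}_{ar,s}^d$ in $(\mathcal{C}_{ar}^d,d_\infty)$.

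As an alternative route for the second assertion (and as a cross-check), one may translate the first assertion through the homeomorphism $\xi_d \circ \mathcal{W}_d$ from Lemma \ref{lem:hom_archimedean}: by \cite[Lemma 5.5]{mult_arch} we have $(\xi_d \circ \mathcal{W}_d)(\mathcal{P}_{\mathcal{W}_d}^s) = \mathcal{C}_{ar,s}^d$, and since homeomorphisms preserve Baire categories (see \cite[Lemma A1]{dietrich2024}), co-meagerness transports from $\mathcal{P}_{\mathcal{W}_d}^s$ to $\mathcal{C}_{ar,s}^d$. Since the result is a pure corollary of machinery already in place, there is no genuine obstacle; the only point requiring a moment's thought is verifying the inclusion $\{\mathrm{supp}(\mu_C)=\mathbb{I}^d\}\subseteq \mathcal{C}_{ar,s}^d$, which rests on the non-triviality of the interior of $L_0$ in the non-strict case.
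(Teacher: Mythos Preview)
Your proposal is correct and follows essentially the same approach as the paper: the first assertion via the inclusion $\mathcal{P}_{\mathcal{W}_d}^{fs} \subseteq \mathcal{P}_{\mathcal{W}_d}^s$ together with Lemma~\ref{thm:ful_sup_wil_meas_co-meager}, and the second by transporting through the homeomorphism of Lemma~\ref{lem:hom_archimedean} combined with \cite[Lemma~5.5]{mult_arch} (your alternative route). Your primary route for the second assertion, via Theorem~\ref{thm:typical_full_supp} and the observation that non-strictness forces $L_0$ to have non-empty interior, is also perfectly fine and is in fact the argument already used inside the proof of Theorem~\ref{thm:arch_full_support}.
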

\begin{proof}
    The first assertion follows from the facts that
     $\mathcal{P}_{\mathcal{W}_d}^{fs} \subseteq \mathcal{P}_{\mathcal{W}_d}^s$ is co-meager 
     in $\mathcal{P}_{\mathcal{W}_d}$ and that super-sets of co-meager sets are themselves co-meager. 
     Having this, applying Lemma \ref{lem:hom_archimedean}, Theorem \ref{thm:arch_full_support} and 
     \cite[Lemma 5.5]{mult_arch} proves the second assertion. 
\end{proof}
As mentioned at the beginning of this section, since the space of $d$-dimensional Archimedean copulas is not 
complete, the fact that the families $\mathcal{C}_{ar,s}^d$ and $\{C \in \mathcal{C}_{ar}^d\colon \mathrm{supp}(\mu_C) =\mathbb{I}^d\}$ are of second Baire category is not obvious. 
Using the results established so far in this section we can, however, prove the following assertions.
\begin{Lemma}\label{lem:strict_second_category}
The sets $\mathcal{P}_{\mathcal{W}_d}^{fs}$ and  $\mathcal{P}_{\mathcal{W}_d}^s$ are of second Baire category in $(\mathcal{P}_{\mathcal{W}_d},\tau_w)$.
\end{Lemma}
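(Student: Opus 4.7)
The plan is to deduce both second-category claims from Lemma~\ref{thm:ful_sup_wil_meas_co-meager} by first verifying that $(\mathcal{P}_{\mathcal{W}_d},\tau_w)$ is itself a Baire space, and then observing the trivial inclusion $\mathcal{P}_{\mathcal{W}_d}^{fs}\subseteq \mathcal{P}_{\mathcal{W}_d}^{s}$.

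As the key step I would show that $\mathcal{P}_{\mathcal{W}_d}$ is a $G_\delta$-subset of the complete metric space $(\mathcal{P}_{nor}^d,\tau_w)$. Since $\{0\}$ is closed in $[0,\infty)$, Portmanteau's theorem implies that $\gamma\mapsto\gamma(\{0\})$ is upper semi-continuous w.r.t.\ $\tau_w$, and consequently
\[
\mathcal{P}_{\mathcal{W}_d} \;=\; \{\gamma\in\mathcal{P}_{nor}^d : \gamma(\{0\})=0\} \;=\; \bigcap_{n\in\mathbb{N}} \bigl\{\gamma\in\mathcal{P}_{nor}^d : \gamma(\{0\})<\tfrac{1}{n}\bigr\}
\]
is a countable intersection of $\tau_w$-open subsets of $\mathcal{P}_{nor}^d$. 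By Alexandrov's theorem (every $G_\delta$-subset of a completely metrizable space is itself completely metrizable) together with the completeness of $(\mathcal{P}_{nor}^d,\tau_w)$ noted in Section~\ref{section:arch_cop}, the space $(\mathcal{P}_{\mathcal{W}_d},\tau_w)$ is completely metrizable and hence a Baire space. In particular, no co-meager subset of $\mathcal{P}_{\mathcal{W}_d}$ can be of first Baire category in $\mathcal{P}_{\mathcal{W}_d}$, since otherwise $\mathcal{P}_{\mathcal{W}_d}$ itself would be meager in itself.

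Combined with Lemma~\ref{thm:ful_sup_wil_meas_co-meager} this immediately yields that $\mathcal{P}_{\mathcal{W}_d}^{fs}$ is of second Baire category in $(\mathcal{P}_{\mathcal{W}_d},\tau_w)$. The same conclusion transfers to $\mathcal{P}_{\mathcal{W}_d}^{s}\supseteq \mathcal{P}_{\mathcal{W}_d}^{fs}$ because any superset of a second-category set is itself of second category.

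The only real subtlety is recognising that $\mathcal{P}_{\mathcal{W}_d}$ has to be upgraded to a Baire space in its own right: since it is only a dense, non-closed subset of the complete space $\mathcal{P}_{nor}^d$, the second-category conclusion does not follow automatically from the co-meagerness statement of Lemma~\ref{thm:ful_sup_wil_meas_co-meager}. Noticing that the defining condition $\gamma(\{0\})=0$ is $G_\delta$ and invoking Alexandrov's theorem is what bridges this gap; the remainder of the argument is routine.
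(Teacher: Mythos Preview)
Your proof is correct, but it takes a genuinely different route from the paper's. The paper argues by contradiction inside the complete space $\mathcal{P}_{nor}^d$: assuming $\mathcal{P}_{\mathcal{W}_d}^{fs}$ were meager in $\mathcal{P}_{\mathcal{W}_d}$, it transfers this to meagerness in $\mathcal{P}_{nor}^d$ (using denseness of $\mathcal{P}_{\mathcal{W}_d}$, Lemma~\ref{lem:williamson_co_meager}'s precursor), combines it with the meagerness of $\mathcal{P}_{nor}^d\setminus\mathcal{P}_{\mathcal{W}_d}$ (Lemma~\ref{lem:williamson_co_meager}) and of $\mathcal{P}_{\mathcal{W}_d}\setminus\mathcal{P}_{\mathcal{W}_d}^{fs}$ (Lemma~\ref{thm:ful_sup_wil_meas_co-meager}), and thereby writes $\mathcal{P}_{nor}^d$ as a meager union, contradicting the Baire property of the complete space $\mathcal{P}_{nor}^d$. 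You instead show directly that $\mathcal{P}_{\mathcal{W}_d}$ is itself a Baire space by exhibiting it as a $G_\delta$ in $\mathcal{P}_{nor}^d$ and invoking Alexandrov's theorem; the second-category conclusion then drops out immediately from co-meagerness. Your argument is more self-contained---it bypasses Lemma~\ref{lem:williamson_co_meager} entirely---and more conceptual, at the cost of invoking the (admittedly standard) Alexandrov theorem. The paper's approach is more elementary in that it uses only the classical Baire category theorem for complete metric spaces, but it needs the additional preparatory lemmas on denseness and co-meagerness of $\mathcal{P}_{\mathcal{W}_d}$ in $\mathcal{P}_{nor}^d$.
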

\begin{proof}
Suppose that $\mathcal{P}_{\mathcal{W}_d}^{fs}$ were of first Baire category in $\mathcal{P}_{\mathcal{W}_d}$. 
Then, considering that $\mathcal{P}_{nor}^d$ contains $\mathcal{P}_{\mathcal{W}_d}$, it would be of first Baire 
category in $\mathcal{P}_{nor}^d$ as well. 
Applying Lemma \ref{lem:williamson_co_meager} and Theorem \ref{thm:ful_sup_wil_meas_co-meager} yields that 
$\mathcal{P}_{nor}^d\setminus \mathcal{P}_{\mathcal{W}_d}$ and $\mathcal{P}_{\mathcal{W}_d}\setminus \mathcal{P}_{\mathcal{W}_d}^{fs}$ are both of first Baire category in $\mathcal{P}_{nor}^d$. Therefore, 
writing $\mathcal{P}_{nor}^d = (\mathcal{P}_{nor}^d\setminus \mathcal{P}_{\mathcal{W}_d}) \cup (\mathcal{P}_{\mathcal{W}_d}^{fs} \cup (\mathcal{P}_{\mathcal{W}_d}\setminus \mathcal{P}_{\mathcal{W}_d}^{fs}))$ 
and using the fact that finite unions of sets of first Baire category are themselves of first Baire category 
would yield that $\mathcal{P}_{nor}^d$ is of first Baire category in itself. A contradiction. 
Therefore $\mathcal{P}_{\mathcal{W}_d}^{fs}$ is of second Baire category in $\mathcal{P}_{\mathcal{W}_d}$. 
Considering $\mathcal{P}_{\mathcal{W}_d}^{fs} \subseteq \mathcal{P}_{\mathcal{W}_d}^{s}$ it follows that 
$\mathcal{P}_{\mathcal{W}_d}^{s}$ is of second Baire category in $\mathcal{P}_{\mathcal{W}_d}$, which completes the 
proof.  
\end{proof}
Using Lemma \ref{lem:hom_archimedean} and translating the previous lemma to $(\mathcal{C}_{ar}^d$,$d_\infty)$
we have shown the following result: 
\begin{Cor}\label{cor:strict_second_category}
The sets $\mathcal{C}_{ar,s}^d$ and  $\{C \in \mathcal{C}_{ar}^d \colon \mathrm{supp}(\mu_{C}) = \mathbb{I}^d\}$ are of second Baire category in $(\mathcal{C}_{ar}^d$,$d_\infty)$.
\end{Cor}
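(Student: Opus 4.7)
The plan is to transport the Baire category statements from $(\mathcal{P}_{\mathcal{W}_d}, \tau_w)$ to $(\mathcal{C}_{ar}^d, d_\infty)$ via the homeomorphism provided by Lemma \ref{lem:hom_archimedean}. The key general fact I would invoke is that homeomorphisms preserve Baire categories (see \cite[Lemma A1]{dietrich2024}): if $h:(X,\tau)\to(Y,\sigma)$ is a homeomorphism and $A \subseteq X$ is of second Baire category in $X$, then $h(A)$ is of second Baire category in $Y$. This is because nowhere-dense sets correspond to nowhere-dense sets under $h$, and countable unions are preserved as well.

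First I would identify the image of each relevant subset under the homeomorphism $\xi_d \circ \mathcal{W}_d \colon \mathcal{P}_{\mathcal{W}_d} \to \mathcal{C}_{ar}^d$. By Theorem \ref{thm:arch_full_support}, a Williamson measure $\gamma$ has full support $[0,\infty)$ if and only if the associated Archimedean copula $C = (\xi_d \circ \mathcal{W}_d)(\gamma)$ has full support $\mathbb{I}^d$, so
\begin{equation*}
(\xi_d \circ \mathcal{W}_d)\bigl(\mathcal{P}_{\mathcal{W}_d}^{fs}\bigr) = \{C \in \mathcal{C}_{ar}^d : \mathrm{supp}(\mu_C) = \mathbb{I}^d\}.
\end{equation*}
Likewise, by \cite[Lemma 5.5]{mult_arch}, $\gamma \in \mathcal{P}_{\mathcal{W}_d}^{s}$ if and only if the corresponding Archimedean copula is strict, so $(\xi_d \circ \mathcal{W}_d)\bigl(\mathcal{P}_{\mathcal{W}_d}^{s}\bigr) = \mathcal{C}_{ar,s}^d$.

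Second, I would apply Lemma \ref{lem:strict_second_category}, which already guarantees that $\mathcal{P}_{\mathcal{W}_d}^{fs}$ and $\mathcal{P}_{\mathcal{W}_d}^s$ are of second Baire category in $(\mathcal{P}_{\mathcal{W}_d},\tau_w)$. Combining the two previous points with the Baire-category invariance under homeomorphisms yields immediately that both $\{C \in \mathcal{C}_{ar}^d : \mathrm{supp}(\mu_C) = \mathbb{I}^d\}$ and $\mathcal{C}_{ar,s}^d$ are of second Baire category in $(\mathcal{C}_{ar}^d, d_\infty)$.

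There is no substantial obstacle here, since the hard work was already carried out in Lemma \ref{lem:strict_second_category} (where the non-completeness of $\mathcal{P}_{\mathcal{W}_d}$ was circumvented by embedding into the complete space $\mathcal{P}_{nor}^d$ and invoking Lemma \ref{lem:williamson_co_meager} and Theorem \ref{lem:full_supp_meas_full_supp_cop}). The only thing to be mindful of is that the statement is about second category \emph{in} $\mathcal{C}_{ar}^d$ itself (not in some ambient completion), and this is exactly what the homeomorphism delivers, since Baire category is an intrinsic topological property that transfers cleanly.
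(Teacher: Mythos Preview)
Your proposal is correct and follows essentially the same route as the paper: the paper simply states that the corollary follows by translating Lemma \ref{lem:strict_second_category} via the homeomorphism of Lemma \ref{lem:hom_archimedean}, exactly as you do. One small slip in your parenthetical remark: the proof of Lemma \ref{lem:strict_second_category} invokes Lemma \ref{thm:ful_sup_wil_meas_co-meager} (co-meagerness of $\mathcal{P}_{\mathcal{W}_d}^{fs}$), not Theorem \ref{lem:full_supp_meas_full_supp_cop}, but this does not affect your argument for the corollary itself.
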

As next step we will show that a typical $d$-dimensional Archimedean copula is not absolutely continuous 
and again start with proving the corresponding result for the family $\mathcal{P}_{\mathcal{W}_d}$.
\begin{Lemma}\label{lem:abs_cont_will_first_cat} 
The set $\mathcal{P}_{\mathcal{W}_d}^{abs}$ is of first Baire category in $(\mathcal{P}_{\mathcal{W}_d},\tau_w)$.
\end{Lemma}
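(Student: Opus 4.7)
The plan is to exhibit $\mathcal{P}_{\mathcal{W}_d}^{abs}$ as a countable union of weakly closed, nowhere dense subsets of $\mathcal{P}_{\mathcal{W}_d}$. The driving heuristic is the classical $\varepsilon$--$\delta$ characterization of absolute continuity: if $\gamma \ll \lambda$ then for every $\varepsilon>0$ there exists $\delta>0$ with $\lambda(A)<\delta \Rightarrow \gamma(A)<\varepsilon$, so abs.~continuity forbids $\gamma$ from concentrating on sets of small Lebesgue measure. Accordingly, for every $k \geq 2$ and $n \geq 1$ I would define
\[
\mathcal{A}_{k,n} := \left\{\gamma \in \mathcal{P}_{\mathcal{W}_d}:\, \gamma(A) \leq 1 - \tfrac{1}{k} \text{ for every } A \in \mathcal{B}([0,\infty)) \text{ with } \lambda(A) < \tfrac{1}{n}\right\}
\]
and proceed in three steps.

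First, I would verify the inclusion $\mathcal{P}_{\mathcal{W}_d}^{abs} \subseteq \bigcup_{k,n} \mathcal{A}_{k,n}$. If $\gamma$ is absolutely continuous then, applying the $\varepsilon$--$\delta$ property with $\varepsilon = \tfrac{1}{k}$ produces a $\delta>0$; choosing $n \in \mathbb{N}$ with $\tfrac{1}{n} < \delta$ and noting that $\tfrac{1}{k} \leq 1 - \tfrac{1}{k}$ for $k\geq 2$ places $\gamma$ in $\mathcal{A}_{k,n}$.

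Second, I would show each $\mathcal{A}_{k,n}$ is weakly closed. Given a sequence $(\gamma_m)_{m \in \mathbb{N}}$ in $\mathcal{A}_{k,n}$ converging weakly to some $\gamma \in \mathcal{P}_{\mathcal{W}_d}$ and a Borel set $A$ with $\lambda(A) < \tfrac{1}{n}$, outer regularity of $\lambda$ yields an open set $U \supseteq A$ with $\lambda(U) < \tfrac{1}{n}$. By Portmanteau's theorem, $\gamma(A) \leq \gamma(U) \leq \liminf_{m} \gamma_m(U) \leq 1 - \tfrac{1}{k}$, so $\gamma \in \mathcal{A}_{k,n}$.

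Third, I would show each $\mathcal{A}_{k,n}$ has empty interior, from which nowhere density follows since the sets are closed. Here I would invoke Lemma \ref{lem:dense_discrete_measures}: in every non-empty weakly open set $\mathcal{O} \subseteq \mathcal{P}_{\mathcal{W}_d}$ there exists a discrete Williamson measure $\tilde{\gamma}$, which is concentrated on a countable set $C$ with $\lambda(C) = 0 < \tfrac{1}{n}$ but $\tilde{\gamma}(C) = 1 > 1 - \tfrac{1}{k}$; hence $\tilde{\gamma} \in \mathcal{O} \setminus \mathcal{A}_{k,n}$. Combining the three steps, $\mathcal{P}_{\mathcal{W}_d}^{abs}$ is contained in a countable union of nowhere dense sets, so it is meager. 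The main (mild) obstacle is the weak-closedness step, where passing from a general Borel set to an open set via outer regularity of $\lambda$ is the only non-formal ingredient; everything else is bookkeeping.
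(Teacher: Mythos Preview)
Your proof is correct and arguably cleaner than the paper's, though the underlying idea is closely related. The paper decomposes $\mathcal{P}_{\mathcal{W}_d}^{abs}$ via the density: for each $n$ it sets $M_n^\gamma=\{z:\mathcal{k}_\gamma(z)>n\}$ and $\mathcal{M}_n=\{\gamma\in\mathcal{P}_{\mathcal{W}_d}^{abs}:\gamma(M_n^\gamma)\le\tfrac14\}$, then shows each $\mathcal{M}_n$ is nowhere dense by constructing a continuous test function $f$ supported on short intervals around the atoms of a finitely supported discrete $\beta$ and verifying $\int f\,d\gamma\le\tfrac12<1=\int f\,d\beta$ for every $\gamma\in\mathcal{M}_n$. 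Since $\mathcal{M}_n$ is only defined for measures possessing a density, the paper cannot argue closedness directly and instead shows the weak closure of $\mathcal{M}_n$ omits a dense set. Your sets $\mathcal{A}_{k,n}$, built from the $\varepsilon$--$\delta$ characterization of absolute continuity, live in the full space $\mathcal{P}_{\mathcal{W}_d}$ and are themselves weakly closed by the Portmanteau/outer-regularity step, so the nowhere density follows immediately from the denseness of discrete Williamson measures (Lemma \ref{lem:dense_discrete_measures}). In effect the paper's test-function argument is an ad hoc verification of a special case of what your $\mathcal{A}_{k,n}$ encode uniformly; your route avoids mentioning the density altogether and is shorter.
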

\begin{proof}
    Suppose that $\gamma \in \mathcal{P}_{\mathcal{W}_d}^{abs}$ has density $\mathcal{k}_\gamma$; for 
    $n \in \mathbb{N}$ define the sets $M_n^\gamma \in \mathcal{B}([0,\infty))$ by
    $$
    M_n^\gamma := \{z \in [0,\infty)\colon \mathcal{k}_\gamma(z) > n\}
    $$
    and set
    $$
    \mathcal{M}_n := \{\gamma \in \mathcal{P}_{W_d}^{abs}\colon\gamma(M_\gamma^n)\leq \tfrac{1}{4}\}.
    $$
    Considering $\lambda(\bigcap_{n = 1}^\infty M_n^\gamma) = 0$, absolute continuity of $\gamma$ 
    yields $\gamma(\bigcap_{n = 1}^\infty M_n^\gamma) = 0$, so for sufficiently large $n$ we have
     $\gamma(M_n^\gamma) < \frac{1}{4}$. This shows that 
    $$
    \mathcal{P}_{\mathcal{W}_d}^{abs} \subseteq \bigcup_{n \in \mathbb{N}}\mathcal{M}_n,
    $$
    it hence suffices to show that $\mathcal{M}_n$ is nowhere dense in $(\mathcal{P}_{\mathcal{W}_d},\tau_w)$,
    which can be done as follows: 
    Let $\beta \in \mathcal{P}_{\mathcal{W}_d}^{dis}$  be an arbitrary discrete $d$-Williamson measure 
    with only finitely many point masses, i.e.,
    $
    \beta := \sum_{i=1}^N\alpha_i\delta_{x_i},
    $
    whereby $0<x_1 < x_2 < \cdots < x_N < \infty$ and $\alpha_1,...,\alpha_N \in (0,1]$ fulfill
     $\sum_{i=1}^N\alpha_i = 1$. 
    We will show that it is not possible to construct a sequence in $\mathcal{M}_n$ converging weakly to 
    $\beta$. Set $x_0 = 0$ and define $\rho$ by
    $$
    \rho := \tfrac{1}{8nN}\min\big\{\min\{|x_i - x_j|\colon i,j \in \{0,...,N\}\},1\big\} > 0.
    $$
    Then obviously we have $\lambda(\bigcup_{i=1}^N(x_i - \rho, x_i + \rho)) \leq \frac{1}{4n}$.
    Letting $f \colon [0,\infty) \rightarrow \mathbb{R}$ denote a continuous function fulfilling 
    $f(x_i) = 1$ and $f\mid_{(x_i-\rho,x_i+\rho)} \in (0,1]$ for every $i \in \{1,\ldots,N\}$, and being 
    identical to $0$ on $[0,\infty) \setminus \bigcup_{i=1}^N (x_i - \rho, x_i + \rho)$. 
    By construction, on the one hand we have $\int_{[0,\infty)}f\mathrm{d}\beta = 1$. 
    On the other hand, for arbitrary but fixed $\gamma \in \mathcal{M}_n$ it follows that
    \begin{align*}
    \int_{[0,\infty)} f \mathrm{d}\gamma = \int_{M_n^\gamma} f \mathrm{d}\gamma + \int_{[0,\infty)\setminus M_n^\gamma} f \mathrm{d}\gamma \leq \frac{1}{4} + \int_{[0,\infty)\setminus M_n^\gamma} f \mathrm{d}\gamma \leq \frac{1}{2} < 1 =\int_\mathbb{I} f \mathrm{d}\beta.
    \end{align*}
    Finally, using the fact that discrete Williamson-measures with finitely many point masses are dense in
     $(\mathcal{P}_{\mathcal{W}_d},\tau_w)$ (see \cite{mult_arch}) it follows that $\mathcal{M}_n$ is nowhere dense in $\mathcal{P}_{\mathcal{W}_d}$, which proves the desired result.
\end{proof}
\begin{theorem}\label{lem:abs_cont_first_cat} 
The family $\mathcal{C}_{ar,abs}^d$ is of first Baire category in $(\mathcal{C}_{ar}^d,d_\infty)$.
\end{theorem}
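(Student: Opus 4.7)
The plan is to obtain this theorem as an almost immediate consequence of the corresponding statement for Williamson measures (Lemma \ref{lem:abs_cont_will_first_cat}) by transporting it through the homeomorphism $\xi_d \circ \mathcal{W}_d \colon (\mathcal{P}_{\mathcal{W}_d},\tau_w) \to (\mathcal{C}_{ar}^d,d_\infty)$ established in Lemma \ref{lem:hom_archimedean}. The key ingredient making this transport work is Corollary \ref{cor:regularity_arch}(i), which says that $C \in \mathcal{C}_{ar,abs}^d$ if, and only if its associated Williamson measure $\gamma$ lies in $\mathcal{P}_{\mathcal{W}_d}^{abs}$. Hence, as sets,
\begin{equation*}
\mathcal{C}_{ar,abs}^d = (\xi_d \circ \mathcal{W}_d)\bigl(\mathcal{P}_{\mathcal{W}_d}^{abs}\bigr).
\end{equation*}

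First, I would recall that Baire categories are preserved under homeomorphisms (see \cite[Lemma A1]{dietrich2024}): if $h \colon (X,\tau) \to (Y,\sigma)$ is a homeomorphism and $A \subseteq X$ is of first Baire category in $(X,\tau)$, then $h(A)$ is of first Baire category in $(Y,\sigma)$. Next I would apply Lemma \ref{lem:abs_cont_will_first_cat}, which states that $\mathcal{P}_{\mathcal{W}_d}^{abs}$ is of first Baire category in $(\mathcal{P}_{\mathcal{W}_d},\tau_w)$. Combining these with the identity above and the fact that $\xi_d \circ \mathcal{W}_d$ is a homeomorphism from $(\mathcal{P}_{\mathcal{W}_d},\tau_w)$ onto $(\mathcal{C}_{ar}^d,d_\infty)$, the desired conclusion follows.

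There is essentially no obstacle here: the heavy lifting has been done in Lemma \ref{lem:abs_cont_will_first_cat} (showing that the sets $\mathcal{M}_n$ covering $\mathcal{P}_{\mathcal{W}_d}^{abs}$ are nowhere dense via testing against Williamson measures with finitely many point masses), in the one-to-one correspondence between regularity of $\gamma$ and of $\mu_C$ provided by Corollary \ref{cor:regularity_arch}, and in the homeomorphism property from Lemma \ref{lem:hom_archimedean}. The proof of Theorem \ref{lem:abs_cont_first_cat} therefore reduces to two or three lines chaining these ingredients together.
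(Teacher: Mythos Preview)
Your proposal is correct and mirrors the paper's own proof essentially verbatim: the paper likewise cites Lemma \ref{lem:abs_cont_will_first_cat}, Corollary \ref{cor:regularity_arch}, Lemma \ref{lem:hom_archimedean}, and the preservation of meager sets under homeomorphisms, and declares the result an immediate consequence.
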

\begin{proof}
     Immediate consequence of the fact that homeomorphisms map meager sets to meager sets, Lemma \ref{lem:hom_archimedean}, Corollary \ref{cor:regularity_arch} and Lemma \ref{lem:abs_cont_will_first_cat}.
\end{proof}
We now turn towards Williamson measures having non-degenerated discrete component and show that 
these elements are topologically atypical. 
\begin{Lemma}\label{lem:arch_no_atoms}
The set
$$\{\gamma \in \mathcal{P}_{\mathcal{W}_d}\colon \gamma \text{ has no point masses}\}$$
is co-meager in $(\mathcal{P}_{\mathcal{W}_d},\tau_w)$.
\end{Lemma}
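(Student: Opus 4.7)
The plan is to prove that the set
$$A := \bigl\{\gamma \in \mathcal{P}_{\mathcal{W}_d}: \gamma \text{ has at least one point mass}\bigr\}$$
is of first Baire category in $(\mathcal{P}_{\mathcal{W}_d},\tau_w)$, from which the claim follows immediately. For $n,k \in \mathbb{N}$ I would introduce
$$A_{n,k} := \bigl\{\gamma \in \mathcal{P}_{\mathcal{W}_d}: \exists \, z \in [0,n] \text{ with } \gamma(\{z\}) \geq \tfrac{1}{k}\bigr\}.$$
Since every point mass of a finite measure sits at some location in $[0,n]$ for $n$ large enough and has weight at least $\tfrac{1}{k}$ for some $k$, the inclusion $A \subseteq \bigcup_{n,k \in \mathbb{N}} A_{n,k}$ holds, and it suffices to show that every $A_{n,k}$ is nowhere dense.

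First I would verify that each $A_{n,k}$ is weakly closed. Given a sequence $(\gamma_j)_{j \in \mathbb{N}} \subseteq A_{n,k}$ converging weakly to $\gamma \in \mathcal{P}_{\mathcal{W}_d}$, choose points $z_j \in [0,n]$ with $\gamma_j(\{z_j\}) \geq \tfrac{1}{k}$. By compactness of $[0,n]$, a subsequence $(z_{j_m})_{m \in \mathbb{N}}$ converges to some $z^\ast \in [0,n]$. For arbitrary $\varepsilon > 0$, eventually $z_{j_m} \in [z^\ast - \varepsilon, z^\ast + \varepsilon]$, so applying Portmanteau's theorem to this closed set yields
$$\gamma\bigl([z^\ast - \varepsilon, z^\ast + \varepsilon]\bigr) \geq \limsup_{m \to \infty} \gamma_{j_m}\bigl([z^\ast - \varepsilon, z^\ast + \varepsilon]\bigr) \geq \frac{1}{k}.$$
Letting $\varepsilon \downarrow 0$ and using continuity from above gives $\gamma(\{z^\ast\}) \geq \tfrac{1}{k}$, hence $\gamma \in A_{n,k}$.

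Second, I would show that each $A_{n,k}$ has empty interior. According to Lemma \ref{lem:dense_discrete_measures} the family $\mathcal{P}_{\mathcal{W}_d}^{abs}$ of absolutely continuous Williamson measures (with full support) is dense in $(\mathcal{P}_{\mathcal{W}_d},\tau_w)$. Since an absolutely continuous measure carries no point masses, it is disjoint from $A_{n,k}$, so every nonempty weakly open subset of $\mathcal{P}_{\mathcal{W}_d}$ meets the complement of $A_{n,k}$. Combined with the closedness established above, this proves that every $A_{n,k}$ is nowhere dense, so $A$ is meager and its complement co-meager.

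The main obstacle should be the closedness argument: one has to rule out the possibility that the locations of the candidate atoms of the approximating measures escape, which is precisely why the truncation to $[0,n]$ and the subsequence/Portmanteau combination are needed. Everything else reduces to applying the already established density of $\mathcal{P}_{\mathcal{W}_d}^{abs}$ in $(\mathcal{P}_{\mathcal{W}_d},\tau_w)$.
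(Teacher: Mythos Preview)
Your proof is correct and follows essentially the same strategy as the paper: cover the set of measures with atoms by countably many sets indexed by a compact interval for the atom location and a lower bound for the atom weight, show each such set is weakly closed via a compactness/subsequence argument, and conclude nowhere denseness from the density of absolutely continuous Williamson measures (Lemma \ref{lem:dense_discrete_measures}). The only cosmetic differences are that the paper uses intervals $[\tfrac{1}{m},m]$ instead of your $[0,n]$ and argues via continuity points of the distribution function rather than applying Portmanteau directly to closed neighborhoods; your version is arguably slightly cleaner.
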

\begin{proof}
    It suffices to show that the set
    $$
    \mathcal{P}_{\mathcal{W}_d}^{p} := \{\gamma \in \mathcal{P}_{\mathcal{W}_d}\colon \exists z \in (0,\infty)
     \text{ with } \gamma(\{z\}) > 0\}
    $$
    is of first Baire category in $ \mathcal{P}_{\mathcal{W}_d}$. 
    For arbitrary $k \in \mathbb{N}$ and $2 \leq m \in \mathbb{N}$ defining the set $ \mathcal{W}_{k,m}$ by
    $$
    \mathcal{W}_{k,m} := \left\{\gamma \in \mathcal{P}_{\mathcal{W}_d} \colon \exists z \in \left[\frac{1}{m},m\right] \text{ such that } \gamma(\{z\})\geq \frac{1}{k}\right\}
    $$
    we obviously have $\mathcal{P}_{\mathcal{W}_d}^p\subseteq \bigcup_{k = 1}^\infty\bigcup_{m = 2}^\infty\mathcal{W}_{k,m}$. 
 We will show that $\mathcal{W}_{k,m}$ is closed in $(\mathcal{P}_{\mathcal{W}_d},\tau_w)$ 
 and proceed as follows: Suppose that $(\gamma_n)_{n \in \mathbb{N}}$ is a sequence in $\mathcal{W}_{k,m}$ 
 converging weakly to some $\gamma \in \mathcal{P}_{\mathcal{W}_d}$. 
 Then there exists some sequence $(z_n)_{n \in \mathbb{N}}$ in $\left[\frac{1}{m},m\right]$ 
 fulfilling $\gamma_n(\{z_n\}) \geq \frac{1}{k}$ for every $n \in \mathbb{N}$. 
 Using compactness of $\left[\frac{1}{m},m\right]$ there exists some subsequence 
 $(z_{n_j})_{j \in \mathbb{N}}$ with limit $z^* \in \left[\frac{1}{m},m\right]$. 
 We want to prove that $\gamma(\{z^*\}) \geq \frac{1}{k}$ holds. To simplify notation let 
 $F_{n_j}$ and $F_\gamma$ denote the distribution functions of $\gamma_{n_j}$ and $\gamma$, respectively. 
 Choose an arbitrarily small $h > 0$ such that $z^* + h$ and $z^* - h$ are continuity points of $F_\gamma$. 
 Then using weak convergence of $(\gamma_n)_{n \in \mathbb{N}}$ to $\gamma$ yields that
    $$
  \lim_{j \rightarrow \infty} F_{n_j}(z^* + h) = F_\gamma(z^* + h) \text{ and } 
  \lim_{j \rightarrow \infty} F_{n_j}(z^* - h) = F_\gamma(z^* - h).
    $$
    Furthermore, convergence of $(z_{n_j})$ to $z^*$ implies the existence of some 
    $j_0 \in \mathbb{N}$ fulfilling that $|z_{n_j} - z^*| < h$ holds for all $j \geq j_0$. 
    Finally, using monotonicity of distribution functions it follows that
    $$
    \frac{1}{k} \leq \gamma_{n_j}(\{z_{n_j}\}) = F_{n_j}(z_{n_j}) - F_{n_j}(z_{n_j}-) 
    \leq F_{n_j}(z^* + h) - F_{n_j}(z^* - h)
    $$
    holds for sufficiently large $j$, which implies
    $$
    \frac{1}{k} \leq F_\gamma(z^* + h) - F_\gamma(z^* - h).
    $$
    Since $h>0$ can be chosen arbitrarily small we finally obtain that $\gamma(\left\{z^*\right\}) \geq \frac{1}{k}$, i.e., $\mathcal{W}_{k,m}$ is closed in $(\mathcal{P}_{\mathcal{W}_d},\tau_w)$. 
    According to Lemma \ref{lem:dense_discrete_measures} the family of absolutely continuous Williamson measures 
    is dense in $(\mathcal{P}_{\mathcal{W}_d},\tau_w)$, hence $\mathcal{W}_{k,m}$ is nowhere dense in 
    $(\mathcal{P}_{\mathcal{W}_d},\tau_w)$. 
    This shows that $\mathcal{P}_{\mathcal{W}_d}^{p}$ is of first Baire category in $\mathcal{P}_{\mathcal{W}_d}$
    and the proof is complete.
\end{proof}
Translating the previous lemma to $\mathcal{C}_{ar}^d$ and using  Lemma \ref{lem:hom_archimedean}, Lemma \ref{lem:arch_no_atoms}, Theorem \ref{thm:main_regularity_result} we have shown that typical Archimedean copulas 
have degenerated discrete component - the following result holds:
\begin{theorem}\label{thm:typical_archimedean}
The set
$$\{C\in \mathcal{C}_{ar}^d \colon \mu_C^{dis}(\mathbb{I}^d) = 0\}$$
is co-meager in $(\mathcal{C}_{ar}^d,d_\infty)$.
\end{theorem}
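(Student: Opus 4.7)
The plan is to reduce the statement to the corresponding assertion on Williamson measures (Lemma \ref{lem:arch_no_atoms}) by transporting via the homeomorphism $\xi_d \circ \mathcal{W}_d$. Concretely, I would argue as follows. First, observe that for $\gamma \in \mathcal{P}_{\mathcal{W}_d}$, the condition that $\gamma$ has no point masses is precisely the condition $\gamma^{dis}([0,\infty)) = 0$. By Theorem \ref{thm:main_regularity_result}(i), this is equivalent to $\mu_C^{dis}(\mathbb{I}^d) = 0$ for the associated Archimedean copula $C = (\xi_d \circ \mathcal{W}_d)(\gamma)$. Hence
\begin{equation*}
    \{C \in \mathcal{C}_{ar}^d \colon \mu_C^{dis}(\mathbb{I}^d) = 0\} = (\xi_d \circ \mathcal{W}_d)\bigl(\{\gamma \in \mathcal{P}_{\mathcal{W}_d} \colon \gamma \text{ has no point masses}\}\bigr).
\end{equation*}

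Next, I invoke Lemma \ref{lem:arch_no_atoms}, which asserts that the set $\{\gamma \in \mathcal{P}_{\mathcal{W}_d}\colon \gamma \text{ has no point masses}\}$ is co-meager in $(\mathcal{P}_{\mathcal{W}_d},\tau_w)$. By Lemma \ref{lem:hom_archimedean}, the map $\xi_d \circ \mathcal{W}_d \colon (\mathcal{P}_{\mathcal{W}_d},\tau_w) \to (\mathcal{C}_{ar}^d,d_\infty)$ is a homeomorphism, and homeomorphisms preserve Baire categories (see, e.g., \cite[Lemma A1]{dietrich2024}). Therefore the image set, namely $\{C \in \mathcal{C}_{ar}^d \colon \mu_C^{dis}(\mathbb{I}^d) = 0\}$, is co-meager in $(\mathcal{C}_{ar}^d,d_\infty)$.

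Since every step is an immediate application of a previously established result, there is no substantive obstacle; the only thing that must be checked carefully is the characterization in the first paragraph, namely that the Lebesgue-decomposition condition $\gamma^{dis}([0,\infty)) = 0$ is literally the same as saying $\gamma$ carries no point masses. This is clear by definition, so the proof reduces to chaining together the three cited results.
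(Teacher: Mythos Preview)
Your proof is correct and follows essentially the same approach as the paper: the paper's argument for this theorem is a one-line translation of Lemma~\ref{lem:arch_no_atoms} via the homeomorphism of Lemma~\ref{lem:hom_archimedean} together with Theorem~\ref{thm:main_regularity_result}(i), which is exactly what you do.
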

Proceeding analogously to the proof of Corollary \ref{cor:strict_second_category} yields the following: 
\begin{Cor}
The set
$$\{C\in \mathcal{C}_{ar}^d \colon \mu_C^{dis}(\mathbb{I}^d) = 0\}$$
is of second Baire category in $(\mathcal{C}_{ar}^d,d_\infty)$.
\end{Cor}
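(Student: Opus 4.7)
The plan is to transfer the statement to the space of Williamson measures via the homeomorphism from Lemma \ref{lem:hom_archimedean}, and then to proceed exactly as in the proof of Lemma \ref{lem:strict_second_category}, with Lemma \ref{lem:arch_no_atoms} replacing Theorem \ref{thm:ful_sup_wil_meas_co-meager} as the key Baire-category input on the Williamson side.

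Concretely, I would first observe that Theorem \ref{thm:main_regularity_result}(i) says that $\mu_C^{dis}(\mathbb{I}^d) = 0$ if, and only if the associated Williamson measure $\gamma$ has no point masses. Since $\xi_d \circ \mathcal{W}_d$ is a homeomorphism from $(\mathcal{P}_{\mathcal{W}_d},\tau_w)$ onto $(\mathcal{C}_{ar}^d,d_\infty)$ and homeomorphisms preserve Baire categories, it suffices to show that
\[
\widetilde{\mathcal{A}} := \{\gamma \in \mathcal{P}_{\mathcal{W}_d} \colon \gamma \text{ has no point masses}\}
\]
is of second Baire category in $(\mathcal{P}_{\mathcal{W}_d},\tau_w)$. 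To this end I would argue by contradiction. Assuming $\widetilde{\mathcal{A}}$ were of first Baire category in $\mathcal{P}_{\mathcal{W}_d}$, denseness of $\mathcal{P}_{\mathcal{W}_d}$ in $\mathcal{P}_{nor}^d$ (established in the first lemma of this section) implies that every subset which is nowhere-dense in $\mathcal{P}_{\mathcal{W}_d}$ is nowhere-dense in $\mathcal{P}_{nor}^d$, so $\widetilde{\mathcal{A}}$ would also be of first Baire category in $\mathcal{P}_{nor}^d$. Combined with Lemma \ref{lem:arch_no_atoms} (applied to the complement $\mathcal{P}_{\mathcal{W}_d} \setminus \widetilde{\mathcal{A}}$, which is then of first Baire category in $\mathcal{P}_{nor}^d$) and Lemma \ref{lem:williamson_co_meager} (which gives that $\mathcal{P}_{nor}^d \setminus \mathcal{P}_{\mathcal{W}_d}$ is of first Baire category in $\mathcal{P}_{nor}^d$), the decomposition
\[
\mathcal{P}_{nor}^d = (\mathcal{P}_{nor}^d \setminus \mathcal{P}_{\mathcal{W}_d}) \cup (\mathcal{P}_{\mathcal{W}_d} \setminus \widetilde{\mathcal{A}}) \cup \widetilde{\mathcal{A}}
\]
would express $\mathcal{P}_{nor}^d$ as a finite union of first-Baire-category sets, contradicting completeness of $(\mathcal{P}_{nor}^d,\tau_w)$ via the Baire category theorem.

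The only delicate point is the transfer from first Baire category in the subspace $\mathcal{P}_{\mathcal{W}_d}$ to first Baire category in the ambient space $\mathcal{P}_{nor}^d$, which relies on $\mathcal{P}_{\mathcal{W}_d}$ being dense in $\mathcal{P}_{nor}^d$; this same observation is already implicitly used in the proof of Lemma \ref{lem:strict_second_category}. Once it is in place, the rest of the argument is a direct transcription of that earlier proof, and the final translation back to $(\mathcal{C}_{ar}^d,d_\infty)$ is routine via Lemma \ref{lem:hom_archimedean}.
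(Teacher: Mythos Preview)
Your proposal is correct and takes essentially the same approach as the paper, which simply says ``Proceeding analogously to the proof of Corollary \ref{cor:strict_second_category}''; you have faithfully unpacked this by transferring to the Williamson side via Theorem \ref{thm:main_regularity_result}(i) and Lemma \ref{lem:hom_archimedean} and then replaying the contradiction argument of Lemma \ref{lem:strict_second_category} with Lemma \ref{lem:arch_no_atoms} in place of Theorem \ref{thm:ful_sup_wil_meas_co-meager}. Your explicit remark that the transfer of first category from the subspace $\mathcal{P}_{\mathcal{W}_d}$ to the ambient space $\mathcal{P}_{nor}^d$ uses denseness is a welcome clarification of a point the paper leaves implicit.
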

\begin{Rem}
\normalfont
    Considering Theorem \ref{thm:equiv_discrete_path} and Theorem \ref{thm:typical_archimedean} 
    we conclude that topologically typical Archimedean copulas do not exhibit pathological behavior, i.e., 
    a typical $d$-dimensional Archimedean copula $C$ fulfills $C \not\in \mathcal{C}_{ar,p}^d$.
\end{Rem}
Combining Theorem \ref{thm:typical_full_supp}, Theorem \ref{lem:abs_cont_first_cat} and Theorem \ref{thm:typical_archimedean} yields the following surprising 
main result on topologically typical multivariate Archimedean copulas:
\begin{Cor}\label{cor:typical_arch_copula}
    A topologically typical $d$-dimensional Archimedean copula $C$ has full support, has degenerated discrete 
    component and is not absolutely continuous.
\end{Cor}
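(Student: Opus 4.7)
The plan is to assemble the corollary from the three co-meagerness results already established in the section, exploiting the standard fact that a finite intersection of co-meager subsets of a topological space is itself co-meager (equivalently, a finite union of meager sets is meager).

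More concretely, I would define the three sets
\begin{align*}
\mathcal{A} &:= \{C \in \mathcal{C}_{ar}^d : \mathrm{supp}(\mu_C) = \mathbb{I}^d\}, \\
\mathcal{B} &:= \mathcal{C}_{ar}^d \setminus \mathcal{C}_{ar,abs}^d, \\
\mathcal{D} &:= \{C \in \mathcal{C}_{ar}^d : \mu_C^{dis}(\mathbb{I}^d) = 0\},
\end{align*}
and first verify that each of them is co-meager in $(\mathcal{C}_{ar}^d,d_\infty)$. Co-meagerness of $\mathcal{A}$ is exactly Theorem \ref{thm:typical_full_supp} and co-meagerness of $\mathcal{D}$ is exactly Theorem \ref{thm:typical_archimedean}. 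For $\mathcal{B}$, Theorem \ref{lem:abs_cont_first_cat} states that $\mathcal{C}_{ar,abs}^d$ is of first Baire category in $(\mathcal{C}_{ar}^d,d_\infty)$, so its complement $\mathcal{B}$ is co-meager by definition.

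Next I would form the intersection $\mathcal{A} \cap \mathcal{B} \cap \mathcal{D}$. Writing the complement as $(\mathcal{C}_{ar}^d \setminus \mathcal{A}) \cup \mathcal{C}_{ar,abs}^d \cup (\mathcal{C}_{ar}^d \setminus \mathcal{D})$ and noting that each of the three summands is meager (being the complement of a co-meager set), the finite union is meager; hence $\mathcal{A} \cap \mathcal{B} \cap \mathcal{D}$ is co-meager. By the definition of a topologically typical element, any $C$ in this intersection has full support, fails to be absolutely continuous, and has degenerated discrete component, which is exactly the content of the corollary.

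There is essentially no obstacle here, since the three ingredient theorems have already done the real work; the only point to state cleanly is the elementary Baire-category bookkeeping that a finite intersection of co-meager sets remains co-meager, which follows directly from the fact that finite unions of nowhere-dense sets, and hence of meager sets, are meager.
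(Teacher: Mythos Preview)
Your proposal is correct and takes essentially the same approach as the paper, which simply states that the corollary follows by combining Theorem \ref{thm:typical_full_supp}, Theorem \ref{lem:abs_cont_first_cat} and Theorem \ref{thm:typical_archimedean}. You have merely spelled out the elementary Baire-category bookkeeping (finite intersections of co-meager sets are co-meager) that the paper leaves implicit.
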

We conclude this section with an example of a topologically atypical bivariate Archimedean copula.
\begin{Ex}\label{ex:arch_abs_and_ex}
\normalfont
    Consider the $2$-Williamson measure $\gamma \in \mathcal{P}_{\mathcal{W}_2}$, defined via its distribution function
     by
    \begin{align*}
    \gamma([0,z]) := \begin{cases}
        0,& \text{ if } z \in [0,\frac{1}{4}),\\
        \frac{2}{3},& \text{ if } z \in [\frac{1}{4},1),\\
        \frac{1}{3}(\frac{1}{4}\sqrt{z-1}+2),& \text{ if } z \in [1,2),\\
        \frac{1}{8}(\sqrt[3]{z-2}+7),& \text{ if } z \in [2,3),\\
        1,& \text{ if } z \geq 3,
    \end{cases}
    \end{align*}
    for every $z \in [0,\infty)$. 
    Obviously (see Figure \ref{fig:dis_abs_cont_arch_meas_gen}) $\gamma$ has a non-degenerated discrete 
    as well as a non-degenerated absolutely continuous component. 
    Straightforward calculations show that the corresponding generator $\psi$ is given by
    $$
    \psi(z) = \begin{cases}
        1-\frac{233 z}{288},& \text{ if } z \in [0,\frac{1}{3}),\\
        \frac{27\sqrt[3]{(1-2z)^4}-4\sqrt[3]{z}(38z-63)}{288\sqrt[3]{z}},& \text{ if } z \in (\frac{1}{3},\frac{1}{2}],\\
        \frac{-3\sqrt{z^3} + \sqrt{(1-z)^3} + 12\sqrt{z}}{18\sqrt{z}},& \text{ if } z \in (\frac{1}{2},1],\\
        \frac{2}{3}(1-\frac{z}{4}),& \text{ if } z \in (1,4],\\
        0,& \text{ if } z > 4.
    \end{cases}
    $$
    The induced Archimedean copula $C_\psi$ has non-degenerated discrete component and is non-strict, 
    so according to Corollary \ref{cor:typical_arch_copula} the copula $C_\psi$ is atypical. 
    A sample of the copula $C_\psi$ is depicted in Figure \ref{fig:dis_abs_cont_arch}.
\end{Ex}
\begin{figure}[!ht]
	\centering
	\includegraphics[width=1\textwidth]{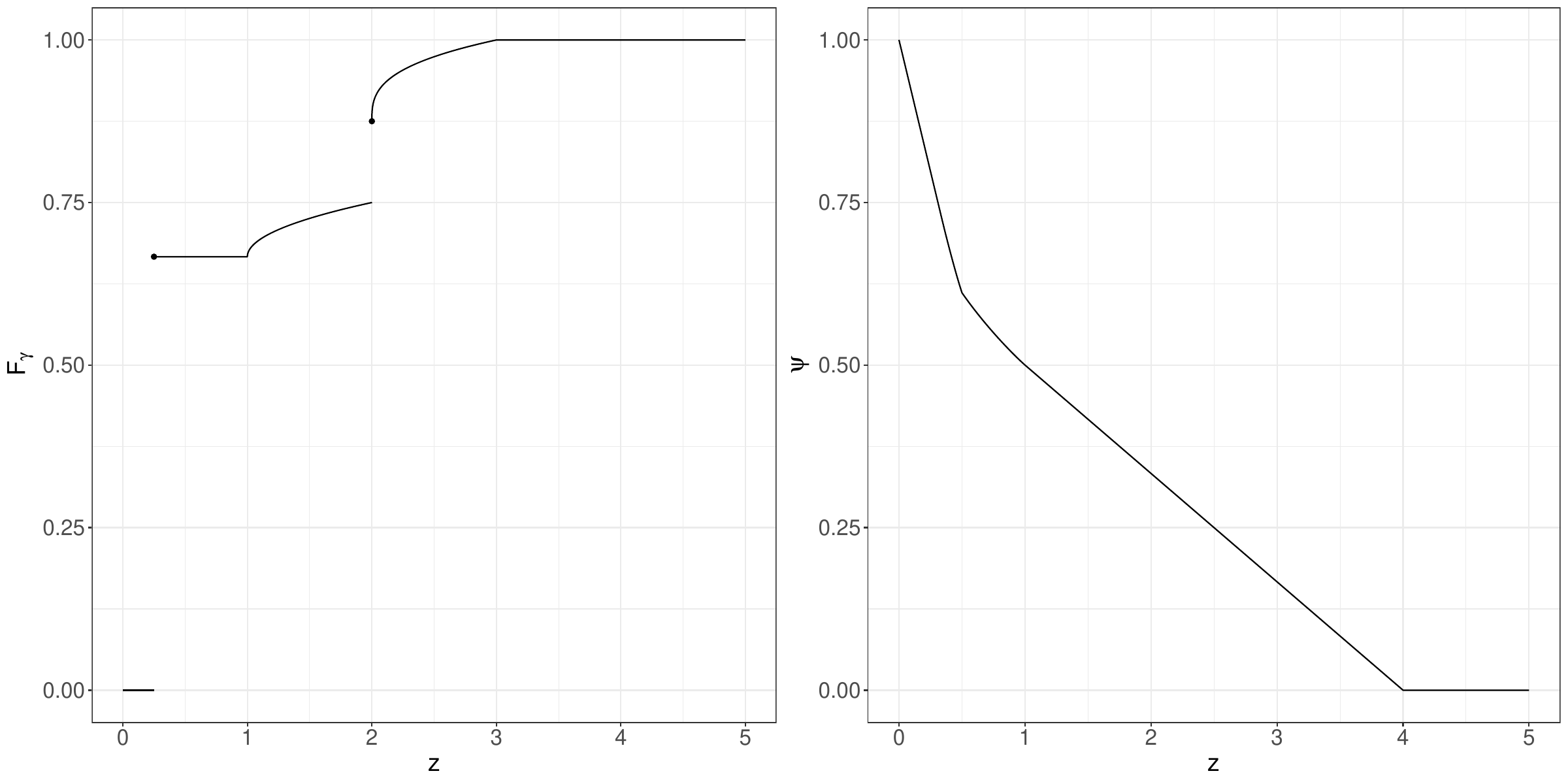}
	\caption{The distribution function $F_\gamma$ (left panel) and the associated generator $\psi$ (right panel) 
	considered in Example \ref{ex:arch_abs_and_ex}.}
\label{fig:dis_abs_cont_arch_meas_gen}
\end{figure}
\begin{figure}[!ht]
	\centering
	\includegraphics[width=1\textwidth]{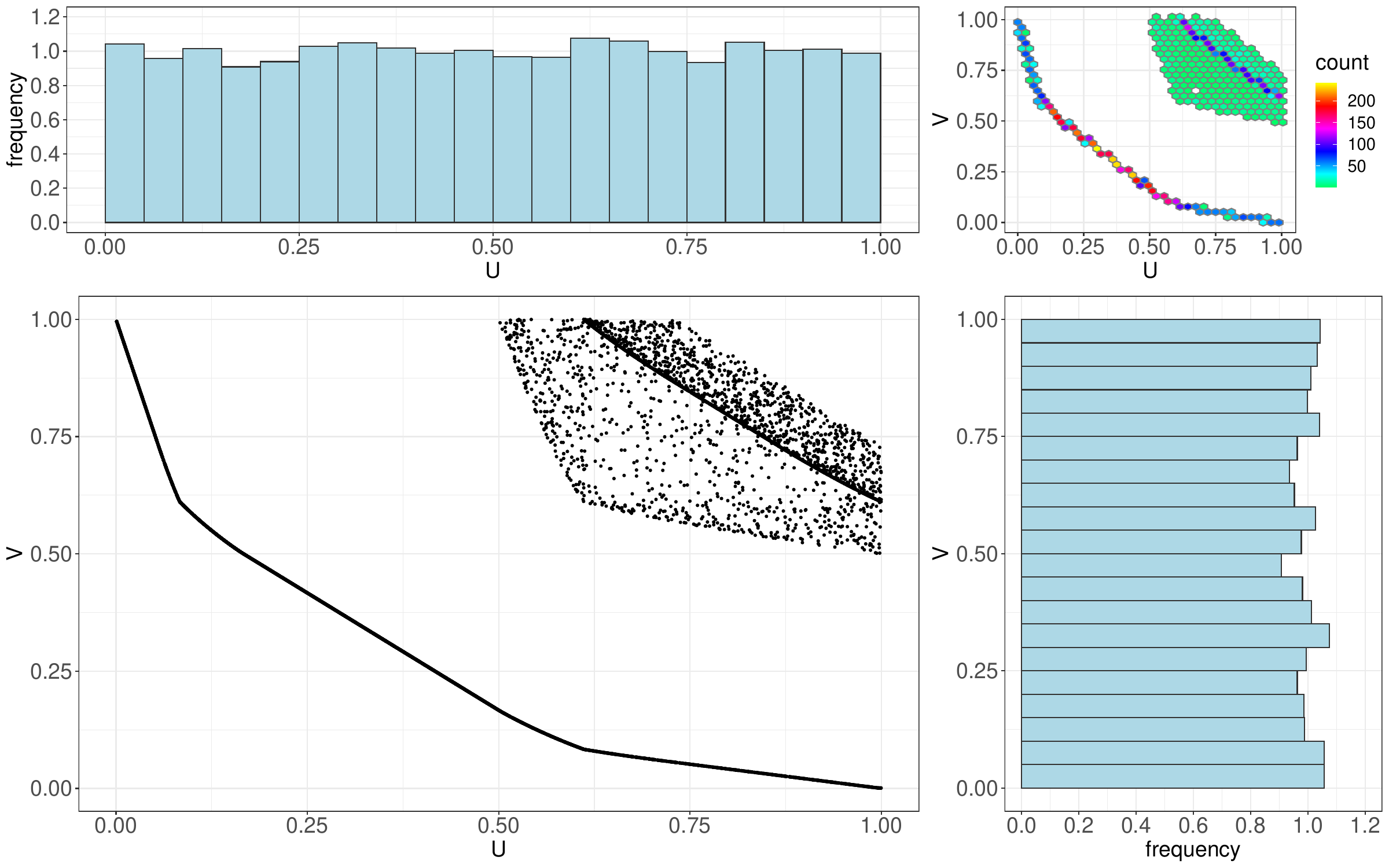}
	\caption{Sample of size 10000 of the Archimedean copula $C_\psi$ with $\psi$ being the generator from 
	Example \ref{ex:arch_abs_and_ex}, its histogram and the two marginal histograms; 
	sample generated via conditional inverse sampling.}
\label{fig:dis_abs_cont_arch}
\end{figure}
\textbf{Acknowledgements.}
The first author gratefully acknowledges the support of Red Bull GmbH within the ongoing Data Science collaboration with the university of Salzburg. The second author gratefully acknowledges the support of the WISS 2025 project ‘IDA-lab Salzburg’
(20204-WISS/225/197-2019 and 20102-F1901166-KZP).

\section*{Declaration}
\textbf{Conflict of interest.} The authors have no competing interests to declare that are relevant to the content of this
article.
\bibliographystyle{plainnat}

\begin{appendices}
\section{Denseness of Williamson measures with full support}
In order to show that fully supported absolutely continuous, discrete and singular Williamson measures are dense in $(\mathcal{P}_{\mathcal{W}_d},\tau_w)$ we make use of the subsequent lemma. 
\begin{Lemma}\label{lem:dense_discrete_measures}
The following assertions hold:
\begin{itemize}
\item[(i)] The family of discrete $d$-Williamson measures with full support is dense in $(\mathcal{P}_{\mathcal{W}_d},\tau_w)$.
\item[(ii)] The family of singular $d$-Williamson measures with full support is dense in $(\mathcal{P}_{\mathcal{W}_d},\tau_w)$.
\item[(iii)] The family of absolutely continuous $d$-Williamson measures with full support is dense in $(\mathcal{P}_{\mathcal{W}_d},\tau_w)$.
\end{itemize}
\end{Lemma}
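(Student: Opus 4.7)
The plan is to treat all three assertions uniformly via a mixing argument. For each type $T \in \{\textup{abs.\ cont.},\,\textup{discrete},\,\textup{singular}\}$ I would first exhibit a single reference Williamson measure $\nu^{(T)} \in \mathcal{P}_{\mathcal{W}_d}$ of type $T$ with full support $[0,\infty)$. Given that, together with a sequence $(\widetilde{\gamma}_n^{(T)})_{n \in \mathbb{N}}$ in $\mathcal{P}_{\mathcal{W}_d}$ of type-$T$ measures (not necessarily with full support) that converges weakly to an arbitrary $\gamma \in \mathcal{P}_{\mathcal{W}_d}$, the convex combinations
$$\gamma_n \;:=\; \bigl(1-\tfrac{1}{n}\bigr)\widetilde{\gamma}_n^{(T)} + \tfrac{1}{n}\,\nu^{(T)}$$
automatically (i) are of type $T$ (each of the three classes is closed under convex combinations, using for the singular case that a countable union of $\lambda$-null sets is $\lambda$-null), (ii) belong to $\mathcal{P}_{\mathcal{W}_d}$ because the Williamson normalization is linear in $\gamma$, (iii) have full support in $[0,\infty)$ since $\nu^{(T)}$ does and its coefficient is positive, and (iv) converge weakly to $\gamma$.

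For the reference measures, a discrete $\nu^{(dis)}$ has the form $\sum_k w_k\,\delta_{q_k}$ with $(q_k)_{k \in \mathbb{N}}$ dense in $(0,\infty)$ and weights chosen so that $\sum_k w_k = 1$ and $\sum_k w_k(1-q_k)^{d-1}\mathbf{1}_{\mathbb{I}}(q_k) = \tfrac{1}{2}$; existence of such weights follows e.g.\ from the construction in \cite[Appendix B.2]{mult_arch}. An absolutely continuous $\nu^{(abs)}$ can be taken as the renormalization of any strictly positive $L^1$-density on $[0,\infty)$. For the singular case I would first fix a singular probability measure $\mu^{s}$ with full support on $[0,1]$ (for instance the law of a random series $\sum_{n \geq 1} X_n 2^{-n}$ with independent $X_n \sim \mathrm{Bernoulli}(p_n)$ and $p_n \in (0,1)\setminus\{\tfrac{1}{2}\}$ chosen so that the Kolmogorov zero--one type criterion yields singularity), then, with $(q_k)$ dense in $(0,\infty)$, set $\nu^{(sing)} := \sum_k 2^{-k}\,(\mu^{s})^{T_{q_k-2^{-k},\,q_k+2^{-k}}}$ (with a standard modification near $0$ to respect $\nu^{(sing)}(\{0\})=0$), and finally perturb one summand to enforce the Williamson normalization.

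For the weak approximations $\widetilde{\gamma}_n^{(T)}$ I would use standard schemes: (i) for $T=dis$, partition $[0,N_n]$ into intervals of length $1/n$ (with $N_n \to \infty$) and replace the $\gamma$-mass on each interval by a point mass at a chosen interior point; (ii) for $T=abs$, convolve $\gamma$ (shifted to stay inside $[0,\infty)$ and away from $\{0\}$) with a narrow strictly positive mollifier; (iii) for $T=sing$, replace each point mass of the scheme in (i) by a rescaled copy of $\mu^{s}$ of total mass $\gamma(I_{n,k})$ supported in the corresponding interval $I_{n,k}$. Each scheme produces a type-$T$ probability measure weakly converging to $\gamma$ but satisfying the Williamson normalization only asymptotically. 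To enforce normalization exactly for every $n$ I would transfer an $O(1/n)$ amount of mass between a fixed piece of the approximant lying in $\mathbb{I}$ and another piece lying in $(1,\infty)$; since mass in $(1,\infty)$ contributes zero to $\int_{\mathbb{I}}(1-t)^{d-1}\,\mathrm{d}\gamma$, this single-parameter adjustment can be tuned to hit $\tfrac{1}{2}$ exactly while remaining within type $T$ and preserving the weak limit.

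The main obstacle will be the singular case, where both the reference measure $\nu^{(sing)}$ and the approximants $\widetilde{\gamma}_n^{(sing)}$ require careful Cantor-type bookkeeping: one must verify that the countable union of nowhere-dense supports in the construction of $\nu^{(sing)}$ is dense in $[0,\infty)$, and that singularity is preserved under the partition-based replacement as well as under the mass-transfer step used to enforce the Williamson normalization. Both ultimately rest on the already-exploited fact that countable unions of $\lambda$-null sets remain $\lambda$-null and that atomlessness is preserved under countable convex combinations.
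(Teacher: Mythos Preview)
Your approach is essentially identical to the paper's: fix a type-$T$ reference Williamson measure with full support, take a weakly convergent sequence of type-$T$ Williamson measures approximating the target, and form convex combinations $(1-\tfrac{1}{n})\widetilde{\gamma}_n + \tfrac{1}{n}\nu$. The only difference is that the paper simply cites \cite[Appendix B.2]{mult_arch} for the approximating sequences and \cite[Theorems 6.1, 6.2]{mult_arch} for the reference measures, whereas you sketch explicit constructions (and correspondingly need an extra mass-transfer step to enforce normalization in the approximants); your extra work is correct but unnecessary given those references.
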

\begin{proof}
Fix $\gamma \in \mathcal{P}_{\mathcal{W}_d}$. Then according to 
\cite[Theorem Appendix B.2]{mult_arch} there exists a sequence 
$(\gamma_n)_{n \in \mathbb{N}}$ of absolutely continuous/discrete/singular Williamson measures 
converging weakly to $\gamma$. 
Considering an absolutely continuous/discrete/singular Williamson measure $\beta$ with full support (see, e.g.,  \cite[Theorem 6.1]{mult_arch} and \cite[Theorem 6.2]{mult_arch} for examples of such measures) 
and setting $\beta_n := (1- \frac{1}{n})\gamma_n + \frac{1}{n}\beta$ for every $n \in \mathbb{N}$ 
yields a sequence $(\beta_n)_{n \in \mathbb{N}}$ of absolutely continuous/discrete/singular Williamson 
measures with full support which converges weakly to $\gamma$.
\end{proof}
\end{appendices}
\end{document}